\documentclass[11pt]{amsart}
\usepackage{fullpage}
\usepackage{amssymb, amsmath, wasysym, mathrsfs, mathtools, color, stmaryrd, enumerate}
\usepackage[all, cmtip]{xy}
\usepackage{url}
\usepackage{tikz-cd}
\usetikzlibrary{calc} 

\definecolor{hot}{RGB}{65,105,225}
\usepackage[pagebackref=true,colorlinks=true, linkcolor=hot,  citecolor=hot, urlcolor=hot]{hyperref}

\newcommand{\commentOut}[1]{}

\theoremstyle{plain}
\newtheorem{theorem}{Theorem}[section]
\newtheorem{prop}[theorem]{Proposition}

\newtheorem{cor}[theorem]{Corollary}

\newtheorem{lemma}[theorem]{Lemma}
\newtheorem{thrm}[theorem]{Theorem}
\newtheorem{claim}[theorem]{Claim}

\theoremstyle{definition}

\newtheorem{defn}[theorem]{Definition}
\newtheorem{rmk}[theorem]{Remark}

\newtheorem{ex}[theorem]{Example}
\newtheorem*{ex*}{Example}
\newtheorem*{problem}{Problem}
\newtheorem*{question}{Question}

\newtheorem*{acknowledgement}{Acknowledgement}

\newcommand{\cO}{{\mathcal O}}

\newcommand{\cF}{{\mathcal F}}

\newcommand{\cX}{\mathcal{X}}
\newcommand{\cY}{\mathcal{Y}}

\newcommand{\wK}{\widetilde{K}}

\newcommand{\bC}{{\mathbb{C}}}

\newcommand{\ubul}{{\,\begin{picture}(-1,1)(-1,-3)\circle*{2}\end{picture}\ }}

\DeclareMathOperator{\spec}{Spec}

\DeclareMathOperator{\Hom}{Hom}

\DeclareMathOperator{\Ho}{\mathrm{Hol}}
\DeclareMathOperator{\Au}{\mathrm{Aut}}

\newcommand\be{\begin{equation}}
\newcommand\ee{\end{equation}}

\title[Equivariant Grauert's approximation theorem and applications]{Equivariant deformations of isolated singularities and applications}

\author{An-Khuong Doan}\address{Institute for Advanced Studies in Mathematics, Harbin Institute of Technology, Harbin, China 150001 }\email{an-khuong.doan@hit.edu.cn}

\begin{document} 
\maketitle
\begin{abstract} 
In this paper, we develop an equivariant version of the classical theory of deformations of germs of complex spaces  in the presence of complex Lie groups. As a tradition, the main result is the existence of equivariant semi-universal deformations of germs of complex spaces in the reductive case and the non-existence in general in the non-reductive case. The latter is justified by an explicit counterexample. In particular, it generalizes Grauert-Donin's existence theorem to the equivariant settings and Ferrer-Puerta-Slodowy's one to the case of reductive complex Lie groups. Several applications to deformations of pairs, rigidity and actions on Milnor fibers are given.
\end{abstract}
\tableofcontents
\section{Introduction} Deformation theory was initiated by the revolutionary work of Kodaira-Spencer-Kuranishi, see \cite{Kod05, KS58, Kur65, Kur71} whose fundamental result is the existence of semi-universal deformations of a given compact complex manifold $X$ containing all information about its small deformations. Furthermore, when the automorphism group of $X$ is reductive, its natural action on $X$ extends to the semi-universal deformations to which we often refer as equivariant semi-universal deformations or equivariant Kuranishi families (cf. \cite{Doa21}). It allows to deduce several applications to the local structure of the moduli stack of integrable complex structures (cf. \cite{Doa24}) and the Techm\"{u}ller stack of compact quotients of $\mathrm{SL}_2(\mathbb{C})$ (cf. \cite{Jam24}); to analytic $K$-semistability and local wall-crossing (cf. \cite{ST25, Tip25}); to deformations of primitive Enriques varieties (cf. \cite{DTX25}); to algebraic approximations of compact K\"{a}hler threefolds (cf. \cite{Lin24}). 

In parallel, deformations of germs of complex spaces were also studied by many people, where the existence of semi-universal deformations were obtained for particular cases: complete intersection germs of complex spaces with isolated singularity \cite{KS72},  germs of complex spaces with isolated singularity and vanishing obstruction space \cite{Tju70}. Shortly after, Grauert definitively settled the existence problem in his seminal paper \cite{Gra72} for a large class of germs of complex spaces including the two former cases. The main tool is his extremely powerful approximation theorem - Grauert's approximation (cf. Theorem \ref{grauapp} below). Around the same year, I. Donin  made use of the theory of Banach spaces developed by A. Douady (cf. \cite{Dou64}) to derive the same result \cite{Don72}. For the reader's convenience, we shall briefly  recall these two approaches in the next section. We also refer the reader to Palamodov's surveys \cite{Pal76, Pal90} for more historical facts.  Inspired by \cite{Doa21, Doa22, Doa24}, it is natural to pose the following problem.

 \begin{problem} \label{mainquest} If $(X,0)$ is a germ of complex spaces with isolated singularities equipped further with an action of a (real) compact Lie group $G$ (or a reductive complex Lie group), does there exist a $G$-equivariant semi-universal deformation of $X_0$?
\end{problem} Let us now review the existing literature on this problem which in fact has a long history. When the singularity $X$ is algebraic, i.e. defined by polynomials, this problem was first attacked more than fifty years ago by  Pinkham who gave an positive answer for $G=\mathbb{C}^*$ in \cite{Pin74, Pin78}. Around the same time, Slodowy investigated systematically equivariant deformations of singularities where he gave partial responses, i.e. the existence of $G$-equivariant semi-universal deformations for $X$ an arbitrary affine variety with isolated singularity and $G$ a finite group in \cite{Slo78}; for $X$ a complete intersection  and $G$ a linearly reductive group (cf. \cite[$\S$2.5, Theorem and Remark 1]{Slo80}). The latter gives several contributions to the algebraic classification of singularities therein. In the same year, Rim showed that the answer is yes once again for  $X$ a general isolated algebraic singularity and $G$ a linearly reductive group. However, the result is unfortunately only formal, i.e. the semi-universal deformation and the actions constructed exist formally, leaving the convergence as an open problem, analytically (\cite{Rim80}, \cite[Section 3]{Slo01}).

 On the holomorphic side where singularities are defined by holomorphic functions, the problem was taken into consideration by Ferrer-Puerta in \cite{FP81, Pue82} and revisited by Hauser in \cite{Hau85} where the authors all showed that the answer is yes again when $G=\mathbb{C}^*$. The same thing was indicated in \cite[Theorem 2.5]{Gre20} mentioning vaguely a $\mathbb{C}^*$-equivariant version of Verdier-Grauert's construction of semi-universal deformations of isolated singularities given in \cite{GH74}. Since then, no further progress towards an answer to this problem has been made. Last but not least, it is worth noting that the confirmation of this problem has been existing as a folklore among experts, e.g. \cite[Paragraph before Example 3.10]{OSS16} and \cite[Section 3]{Slo01}.
 
 Therefore, the main aim of this paper is to give an  answer to the problem under consideration. Namely, we are going to prove that the answer is affirmative in the case that $G$ is reductive and is negative in general in the case that $G$ is not reductive. This effectively generalizes the main results of \cite{Gra72, Don72, FP81, Pue82, Hau85, Gre20} to the equivariant settings. A summary of approaches, precise results, applications and open problems shall be given in the next section.


\section{Possible approaches, main results, applications and open problems}

\subsection{Theory of Banach analytic spaces and resolutions of structural sheaves} Let $\mathcal{O}_{X,0}$ be the structural sheaf of a germ of complex spaces $(X,0)$.   Consider a free resolution $\mathcal{F}^\ubul$ of finite length of $\mathcal{O}_{X}$
\begin{equation*}\mathcal{F}^\ubul: 0 \longrightarrow \mathcal{F}^r \overset{\varphi_r}{\longrightarrow}\mathcal{F}^{r-1} \overset{\varphi_{r-1}}{\longrightarrow}\cdots \overset{\varphi_{1}}{\longrightarrow} \mathcal{F}^0  \overset{\pi}{\longrightarrow} \mathcal{O}_{X,0}\longrightarrow 0
\end{equation*}
 I. Donin proved that a deformation of $(X,0)$ is equivalent to a deformation of $\mathcal{F}^\ubul$ (cf. \cite[Proposition 1.3]{Don72}). Therefore, instead of constructing a semi-universal deformation of $(X,0)$, one can construct a semi-universal deformation of the resolution $\mathcal{F}^\ubul$. This new point of view allows Donin to provide a semi-universal deformation of $(X,0)$ (see \cite[Theorem 3.1]{Don72}) by making use of the theory of Banach analytic spaces in \cite{Dou64}.

Taking into account the presence of the actions, Ferrer and Puerta \cite{FP81, Pue82} used the same stategy to prove the existence of $\mathbb{C}^*$-equivariant deformations of $(X,0)$. One of the crucial steps is to prove the existence of a $\mathbb{C}^*$-equivariant free resolution of finite length of $\mathcal{O}_{X,0}$ - the first crucial step in Donin's construction (remark that the structural sheaf $\mathcal{O}_{X,0}$ inherits a natural $G$-action induced from the one on $(X,0)$).

In this paper,  we shall in fact follow this approach.  The main idea is quite natural and standard : we deal with compact Lie group actions first and then pass to general reductive complex Lie groups by means of complexifications. As expected, the first result is the existence of equivariant free resolutions of the structural sheaf for compact groups.
\begin{thrm} \label{Gres} Let $(X,0) \hookrightarrow (\mathbb{C}^n,0)$ be an inclusion of germs of complex spaces. Suppose that $G$ be a compact complex Lie group acting holomorphically on $(\mathbb{C}^n,0)$ leaving $(X,0)$ invariant. Then there exists a $G$-equivariant resolution of the structural sheaf $\mathcal{O}_{X,0}$
$$\mathcal{F}^\ubul: 0 \longrightarrow \mathcal{F}^r \overset{\varphi_r}{\longrightarrow}\mathcal{F}^{r-1} \overset{\varphi_{r-1}}{\longrightarrow}\cdots \overset{\varphi_{1}}{\longrightarrow} \mathcal{F}^0  \overset{\pi}{\longrightarrow} \mathcal{O}_{X,0}= \mathcal{O}_{\mathbb{C}^n,0}/I_X\longrightarrow 0$$ where $I_X$ is the ideal sheaf defining the germ $(X,0)$ and $\mathcal{F}_i \cong \mathcal{O}_{\mathbb{C}^n,0}^{\oplus m_i}$ for some $m_i \in \mathbb{N}$.
\end{thrm}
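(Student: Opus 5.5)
Throughout, I read "compact complex Lie group" as a compact (real) Lie group $G$ acting holomorphically, as in the Problem. Compactness will be used in two ways: to average over Haar measure, and to decompose finite-dimensional continuous representations into irreducibles. The plan is to build the resolution step by step, making each syzygy module a $G$-stable submodule generated by finitely many elements that span a finite-dimensional $G$-stable subspace.

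\textbf{Step 1: linearize.} By Cartan's linearization theorem for compact groups, after a $G$-equivariant holomorphic change of coordinates at $0$ we may assume $G$ acts linearly on $\mathbb{C}^n$ through a representation $\rho\colon G\to \mathrm{GL}_n(\mathbb{C})$. Concretely, $\Phi=\int_G d\rho(g)^{-1}\circ g\,dg$ is tangent to the identity and intertwines the two actions. Then $G$ acts on $\mathcal{O}_{\mathbb{C}^n,0}$ by $(g\cdot f)(x)=f(g^{-1}x)$. This action preserves the degree filtration, so each space $P_d$ of homogeneous polynomials of degree $d$ is a finite-dimensional $G$-module, and $I_X$ is $G$-stable.

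\textbf{Step 2: the key lemma.} Let $M$ be a finitely generated $\mathcal{O}_{\mathbb{C}^n,0}$-module with a compatible $G$-action (semilinear: $g(fm)=(g f)(g m)$), continuous in the natural sense. Then $M$ has a finite set of generators spanning a finite-dimensional $G$-stable subspace $V\subset M$.

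To prove it, set $\mathfrak{m}$ the maximal ideal. By Nakayama, $M/\mathfrak{m}M$ is a finite-dimensional $G$-module. Being a compact-group representation, it is completely reducible, so we may choose a basis adapted to a decomposition into irreducibles. The remaining task is to lift $M/\mathfrak{m}M$ equivariantly into $M$, i.e. to find a $G$-linear section $s\colon M/\mathfrak{m}M\to M$. Start from any linear section $s_0$ and average: $s=\int_G g\circ s_0\circ g^{-1}\,dg$. This integral makes sense because $s_0$ lands in a finitely generated module. Using Krull/Artin–Rees, we may work in the finite-dimensional $G$-modules $M/\mathfrak{m}^kM$ and pass to the limit, or integrate directly using the Fréchet topology on $M$. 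Since $\mathfrak{m}M$ is $G$-stable, $s$ is still a section mod $\mathfrak{m}M$. By Nakayama, the image $V=s(M/\mathfrak{m}M)$ generates $M$.

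This averaging is the step I expect to be the main obstacle. One must make sense of the integral and show it converges in $M$, which requires closedness of submodules (Krull intersection / Cartan's closure of modules) and continuity of the action on $M$. My fallback is to produce the lift as a $G$-invariant element of the finite-dimensional module $\mathrm{Hom}(M/\mathfrak{m}M, M/\mathfrak{m}^kM)$ for large $k$ and then argue it lifts. Averaging in finite dimensions is harmless, so the fallback mainly needs the lifting argument.

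\textbf{Step 3: iterate.} Apply Step 2 to $M=I_X$ to get a finite-dimensional $G$-stable $V_0$ generating $I_X$. Set $\mathcal{F}^0=\mathcal{O}_{\mathbb{C}^n,0}$ with its natural action, $\mathcal{F}^1=\mathcal{O}\otimes_{\mathbb{C}} V_0\cong\mathcal{O}^{m_1}$ with the diagonal action, and $\varphi_1(f\otimes v)=fv$. This $\varphi_1$ is $G$-equivariant with image $I_X$. Its kernel $K_1$ is a finitely generated $G$-stable submodule, since $\mathcal{O}$ is Noetherian; it is closed, hence inherits a continuous action. Apply Step 2 to $K_1$ to obtain $\mathcal{F}^2=\mathcal{O}\otimes V_1\to \mathcal{F}^1$, and so on.

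\textbf{Step 4: termination.} By Hilbert's syzygy theorem for the regular local ring $\mathcal{O}_{\mathbb{C}^n,0}$, the kernel $K_{n}$ at stage $n$ is free (projective over a local ring). We must make sure the last term is free with a compatible action. If the generators are chosen minimally, as in Step 2 (they lift a basis of $M/\mathfrak{m}M$), the construction gives a minimal resolution. Its length is then at most $n$, so some $K_r=0$ with $r\le n$, and the sequence terminates. This yields the required $G$-equivariant finite free resolution, with $\mathcal{F}^i\cong\mathcal{O}_{\mathbb{C}^n,0}^{\oplus m_i}$ and $m_i=\dim V_{i-1}$.
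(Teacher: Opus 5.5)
Your proposal is correct and follows essentially the same route as the paper. The paper also Haar-averages a linear section of $M\to M/\mathfrak{m}M$ to obtain a $G$-stable finite-dimensional space of minimal generators, applies Nakayama, iterates on the syzygy modules with the diagonal action on $\mathcal{O}_{\mathbb{C}^n,0}\otimes V$, and stops by finite projective dimension over the regular local ring. Your treatment of the convergence and membership of the averaged section (integrating on a fixed $G$-stable neighborhood, then using closedness of submodules via Krull intersection or Cartan) fills in a point the paper dismisses with ``it is easy to see''.
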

One of the applications is a $G$-equivariant version of Donin's correspondence given in \cite[\S1]{Don72} between deformations of the germ $(X,0)$ (cf. Definition \ref{defger} and Definition \ref{defdefequ}) and those of the free resolution of $\mathcal{O}_{X,0}$.
\begin{thrm}\label{gcordef} Let  $(X,0) \hookrightarrow (\mathbb{C}^n,0)$ be an inclusion of germs of complex spaces. Suppose that $G$ is a Lie compact group acting holomorphically and linearly on $(\mathbb{C}^n,0)$ leaving $(X,0)$ invariant and  $\cF^\ubul$ is a fixed $G$-equivariant free resolution of the structure sheaf $\cO_{X,0}$. Then there is a correspondence between 
\[
\{ G\text{-equivariant deformations of } (X,0) \} \;\longleftrightarrow\; \{ G\text{-equivariant deformations of  } \cF^\ubul \}.
\]
\end{thrm}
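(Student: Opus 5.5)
We will follow Donin's proof of \cite[Proposition 1.3]{Don72} and make each step $G$-equivariant. The only new ingredient is averaging over the compact group $G$ with respect to its normalized Haar measure $dg$. Because $G$ acts linearly on $\mathbb{C}^n$, it acts on $\mathcal{O}_{\mathbb{C}^n,0}$, and hence on each $\mathcal{O}_{\mathbb{C}^n\times S,(0,s_0)}$ for any base germ $(S,s_0)$ with trivial action on $S$. It also acts on each free module $\mathcal{F}^i$ through the equivariant structure fixed in Theorem \ref{Gres}.

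The averaging operator $P(h)=\int_G g\cdot h\,dg$ is a projection onto invariants. It is $\mathcal{O}_{\mathbb{C}^n,0}^G$-linear and preserves $\mathcal{O}_S$-linearity. It is also continuous for the usual Fréchet/Banach topologies on convergent power series over a small polydisc, which we may take $G$-stable, for instance a ball for a $G$-invariant Hermitian metric. Hence $P$ preserves convergence.

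\emph{From a resolution to a deformation of the germ.} Given a $G$-equivariant deformation $(\widetilde{\mathcal{F}}^\ubul,\widetilde\varphi)$ of $\cF^\ubul$ over $(S,s_0)$, we set $\mathcal{O}_{\cX}=\operatorname{coker}(\widetilde\varphi_1)$. Since $\widetilde\varphi_1$ is $G$-equivariant, $\cX\subset \mathbb{C}^n\times S$ is $G$-invariant. Donin's argument, via the local criterion of flatness and exactness of the deformed complex, shows that $\cX\to S$ is flat with special fibre $(X,0)$. The action is by automorphisms over $S$, so this is a $G$-equivariant deformation.

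\emph{From a deformation of the germ to a resolution.} Let $\cX\to S$ be a $G$-equivariant deformation, which we embed equivariantly as $\cX\subset(\mathbb{C}^n\times S,0)$ with ideal $\widetilde I$. Producing this embedding is itself a step: for compact $G$ we lift the $G$-stable finite-dimensional span of the coordinate functions from $\mathcal{O}_{X,0}$ to $\mathcal{O}_{\cX}$ equivariantly, using complete reducibility. We then lift the differentials $\varphi_i$ inductively. Choose the equivariant basis of $\mathcal{F}^0$ mapping to $1$. By flatness, generators of $I_X=\operatorname{im}\varphi_1$ lift to elements of $\widetilde I$, and Donin obtains some $\mathcal{O}_S$-linear lift $\psi_1:\mathcal{O}^{m_1}_{\mathbb{C}^n\times S}\to\mathcal{O}_{\mathbb{C}^n\times S}$ with image $\widetilde I$. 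We replace it by $\widetilde\varphi_1=P(\psi_1)=\int_G g\psi_1 g^{-1}dg$. This map still restricts to $\varphi_1$ on the special fibre, since $\varphi_1$ is equivariant, and its image still lies in $\widetilde I$, since $\widetilde I$ is $G$-stable. By Nakayama over the local ring its image equals $\widetilde I$, because it agrees with $I_X$ modulo $\mathfrak m_S$ and $\widetilde I\otimes\mathcal{O}_{S,s_0}/\mathfrak m$ equals $I_X$ by flatness.

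Inductively, $\ker\widetilde\varphi_i$ is a $G$-stable flat submodule whose special fibre is $\operatorname{im}\varphi_{i+1}$. We lift $\varphi_{i+1}$ arbitrarily into this kernel and average, which again preserves the kernel condition $\widetilde\varphi_i\circ\widetilde\varphi_{i+1}=0$. Exactness of the lifted complex follows from Donin's flatness and semicontinuity argument and needs no change. Finally we check that the two constructions are mutually inverse up to $G$-equivariant isomorphism. Two equivariant lifts of the same complex differ by a homotopy, and averaging that homotopy makes the isomorphism equivariant.

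\emph{Main obstacle.} We expect the hardest step to be making the averaging analytically sound. One must check that $P$ of a convergent $\mathcal{O}_S$-linear map is convergent on a uniform neighbourhood, which requires a $G$-stable polydisc and compactness of $G$. One must also check that the non-equivariant lifts exist at the level of germs, where Donin uses Banach-space arguments. If needed, we would first average on finite jets, where the representations are finite-dimensional, and then pass to the limit using Artin-type approximation (Grauert's approximation, Theorem \ref{grauapp}).
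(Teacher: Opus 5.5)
\textbf{Main gap: you only treat trivial actions on the base.} Your strategy is otherwise the paper's. You embed equivariantly by averaging a lift (Lemma \ref{gdefemb}), lift the differentials inductively with Haar averaging and Nakayama (Proposition \ref{deftores}), use exactness of an equivariant lifted complex, and relate two lifts by an equivariant comparison map. The problem is that you define the $G$-action on $\mathcal{O}_{\mathbb{C}^n\times S}$ by letting $G$ act trivially on $S$, and you conclude that ``the action is by automorphisms over $S$''.

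A $G$-equivariant deformation in the sense of Definition \ref{defdefequ} carries an arbitrary holomorphic $G$-action on $(S,0)$, and $\pi$ is only required to be equivariant. Nontrivial base actions are exactly the case the theorem is used for later. The base of the equivariant semi-universal deformation carries the induced action on $T^1_{(X,0)}$ (Theorem \ref{defvartodefres}). In the $A_{2m-1}$ example at the end of Section \ref{sec: exi}, the finite group $\mathbb{Z}/m\mathbb{Z}$ acts on the base by $s_i\mapsto\zeta^{-i}s_i$. With the trivial base action you have only proved the correspondence for deformations of the pair $((X,0),G)$ (Definition \ref{defdefpair}).

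\textbf{How to repair it.} Put the diagonal action on $(\mathbb{C}^n,0)\times(S,0)$, as the paper does in Lemma \ref{gdefemb}. The embedding is then $(\rho,\pi)$, with $\rho$ an averaged lift of the coordinates and $\pi$ already equivariant. Your argument survives, but two justifications must change.
\begin{enumerate}
\item Your averaging operator $P$ on elements is no longer $\mathcal{O}_S$-linear, only $\mathcal{O}_S^G$-linear. What you actually use is that, for the semilinear action, $g\psi g^{-1}$ is again $B$-linear whenever $\psi$ is, and this still holds.
\item You need $\mathfrak{m}_S B$ to be $G$-stable, which holds because $G$ fixes $0\in S$. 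Then reduction to the special fibre commutes with averaging, so $P(\psi_1)$ still reduces to $\varphi_1$, and your Nakayama steps are unchanged.
\end{enumerate}

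\textbf{Two smaller points.} First, drop the fallback in your last paragraph. Passing from equivariant jets to a convergent equivariant solution via Grauert's approximation would need an equivariant version of Theorem \ref{grauapp}, which the paper leaves open as a Question; Grauert's theorem gives no control on equivariance. The fallback is also unnecessary: after linearizing the compact action and taking $G$-stable neighbourhoods, the Haar integral of $g\psi g^{-1}$ converges directly. Second, phrase the last step more precisely. Average a comparison chain map lifting $\mathrm{id}_{\mathcal{O}_{\mathcal{X}}}$. Then correct it by an averaged, lifted homotopy so that it reduces to the identity on the central fibre. This is the equivariant isomorphism the paper asserts after Proposition \ref{deftores}.
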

Therefore, by integrating the $G$-action along Donin's construction and using nice properties of representations of compact Lie groups, we obtain the following existence result.
\begin{thrm} \label{thm: compact}
Let $(X,0)$ be a germ of complex spaces with a distinguished point  $0$ such that $l:=\dim_{\mathbb{C}} T^1_{(X,0)} < +\infty$ and $G$ a compact complex Lie group acting holomorphically on $(X,0)$. Then there exists a $G$-equivariant semi-universal deformation $\pi: (\mathcal{X},0) \rightarrow (S,0)$ of $(X,0)$ such that $\dim_{\mathbb{C}} T_0S =l$, unique up to $G$-equivariant isomorphisms. 
\end{thrm}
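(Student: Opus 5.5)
We would prove Theorem \ref{thm: compact} by running Donin's construction of a semi-universal deformation of a free resolution, keeping track of the $G$-action at every step and averaging with Haar measure whenever a choice has to be made. (Here $G$ should be a compact real Lie group acting holomorphically; a compact \emph{complex} Lie group acting on a germ would be a torus acting trivially near $0$, so that reading is not intended.)

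\textbf{Step 1: linearize and resolve.} First we linearize the action. Choose a $G$-stable finite-dimensional generating subspace $V$ of $\mathfrak m_{X,0}$ lifting $\mathfrak m/\mathfrak m^2$; it exists because $G$ is compact and acts continuously, so finite vectors are dense. This gives a $G$-equivariant embedding $(X,0)\hookrightarrow (V^*,0)\cong(\mathbb{C}^n,0)$ with $G$ acting linearly. Equivalently, Cartan's averaging argument linearizes the induced action on $(\mathbb{C}^n,0)$. Theorem \ref{Gres} then gives a $G$-equivariant free resolution $\mathcal{F}^\ubul$ of $\mathcal{O}_{X,0}$. We can arrange that each $\mathcal{F}^i=\mathcal{O}_{\mathbb{C}^n,0}\otimes W_i$ with $W_i$ a finite-dimensional $G$-representation. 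By Theorem \ref{gcordef}, it then suffices to construct a $G$-equivariant semi-universal deformation of $\mathcal{F}^\ubul$.

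\textbf{Step 2: the $G$-invariant Banach analytic model.} Following Donin and Douady, fix a $G$-invariant polydisc (for instance a ball for a $G$-invariant Hermitian metric) on which everything is defined. Deformations of the differentials $\varphi_i$ are parametrized by the Banach analytic subset
\[
\mathcal{Z}=\Bigl\{(\varphi_i+\psi_i)_i : (\varphi_{i}+\psi_{i})\circ(\varphi_{i+1}+\psi_{i+1})=0\Bigr\}
\]
of a Banach space $E$ of bounded holomorphic matrices. The Banach Lie group $\mathcal{A}$ of automorphisms of the free modules acts on $\mathcal{Z}$, and its orbits are the isomorphism classes. Since $G$ acts linearly on both the polydisc and the $W_i$, it acts by isometries on $E$ (after averaging the norm), preserves $\mathcal{Z}$, and normalizes $\mathcal{A}$.

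Donin constructs a finite-dimensional analytic slice $S\subset\mathcal{Z}$ transversal to the $\mathcal{A}$-orbit through $\varphi$. Its tangent space is identified with $T^1_{(X,0)}$, which has dimension $l$, and on it the Kuranishi-type map to the obstruction space is defined. The splittings Donin needs are these:
\begin{itemize}
\item a topological complement of the tangent space to the orbit;
\item a right inverse of the linearized coboundary operator;
\item a complement of the image inside the cocycles.
\end{itemize}
We would choose all of them $G$-equivariantly. Starting from any bounded projection $P$, the averaged operator $\int_G g P g^{-1}\,dg$ is again a bounded projection onto the same image, and it is $G$-equivariant. Because the construction then uses only equivariant linear data together with the implicit function theorem, whose solutions are unique and hence $G$-fixed, the slice $S$ is $G$-stable. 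Restricting the universal family over $\mathcal{Z}$ gives a $G$-equivariant family $\mathcal{X}\to S$, and $\dim T_0S=l$.

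\textbf{Step 3: semi-universality and uniqueness.} Let $\mathcal{Y}\to (T,0)$ be a $G$-equivariant deformation. Donin's theorem gives a classifying map $f:T\to S$ whose differential is determined. We must show $f$ can be chosen $G$-equivariant. Pulling back to $\mathcal{Z}$, the deformation corresponds to a map $T\to\mathcal{Z}$, and the slice projection $\mathcal{Z}\supset U\to S$ along $\mathcal{A}$-orbits is obtained from the implicit function theorem using our equivariant complements. This projection is therefore $G$-equivariant, and composing gives an equivariant $f$. Over a non-reduced $T$ the same holds formally order by order, and one can also average the induced isomorphism of deformations.

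Uniqueness up to $G$-isomorphism follows by the standard argument. Two equivariant semi-universal deformations admit equivariant maps to each other that are isomorphisms on tangent spaces, hence isomorphisms of germs.

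\textbf{Main obstacle.} The delicate point is Step 2. We must check that Donin's slice and the retraction onto it really depend only on choices that can be made $G$-equivariant, and that the isomorphism $\mathcal{Y}\cong f^*\mathcal{X}$ is compatible with the $G$-actions. A semi-universal deformation only determines the differential of $f$, not $f$ itself, and naive averaging of maps into a non-linear $S$ is meaningless. We would handle this by working inside the linear Banach space $E$: there averaging is legitimate, and we then retract equivariantly onto $S$.
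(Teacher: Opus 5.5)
Your proposal is correct at the paper's level of detail and follows essentially the same route. You linearize, take the equivariant resolution of Theorem~\ref{Gres} and the correspondence of Theorem~\ref{gcordef}, run Donin's Banach-analytic construction with the complements to $\mathrm{Im}\,d\delta^1$ and $\mathrm{Im}\,d\delta^0$ made $G$-stable by Haar averaging, and get equivariant semi-universality from the $G$-equivariant isomorphism $P\times S\to Z_{\widetilde K}$, where $P$ is a $G$-stable complement to $\ker d\delta^0$; this is exactly your equivariant slice projection, and your Step 1 even supplies the linear equivariant embedding of $(X,0)$ that the paper tacitly assumes.

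Two details need tightening. First, your $\mathcal{A}$ must contain the coordinate changes $z+\Phi$ of the paper's $\mathcal{F}^0(K)$, with $G$ acting by $\Phi\mapsto g\circ\Phi\circ g^{-1}$; otherwise the slice parametrizes embedded deformations, whose tangent space is $\mathrm{Hom}(I_X,\mathcal{O}_{X,0})$ rather than $T^1_{(X,0)}$. Second, the bounded projections you average come from Douady privilegedness of polydiscs, not of a $G$-invariant ball. That is why the paper works on a privileged polydisc $\widetilde K$ inside a $G$-stable ball $K$ and transports the action from $K$, a step that itself deserves more care.
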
 Here, $T^1_{(X,0)}$ is the set of isomorphism classes of deformations of $(X,0)$ over the double point $\mathbb{D}:=\spec(\bC[\epsilon])$ with $\epsilon^2=0$, which actually has a structure of $\mathbb{C}$-vector spaces (cf. Theorem \ref{def: t1} and the paragraph right before it). As a sequence,  by means of complexification, we obtain an answer to the problem in the general case.
\begin{thrm} \label{thm: reductive}
Let $(X,0)$ be a germ of complex spaces with a distinguished point  $0$ such that $l:=\dim_{\mathbb{C}}T^1_{(X,0)} < +\infty$. Let $G$ be a reductive complex Lie group and $\widetilde G$ be a connected real maximal compact subgroup of $G$ such that its complexification is nothing but $G$. Suppose that there exists a holomorphic $(G,\widetilde G)$-action on $(X,0)$. Then there exists a holomorphic $(G,\tilde G)$-equivariant semi-universal deformation $\pi: (\mathcal{X},0) \rightarrow (S,0)$ of $(X,0)$ such that $\dim_{\mathbb{C}} T_0S =l$, unique up to $(G,\tilde{G}$-equivariant isomorphisms. 
\end{thrm}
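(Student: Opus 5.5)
Starting from Theorem \ref{thm: compact} applied to the maximal compact subgroup $\widetilde G$, I will extend the $\widetilde G$-action to a holomorphic $G$-action by complexification. The first step restricts the given holomorphic $(G,\widetilde G)$-action on $(X,0)$ to $\widetilde G$. Theorem \ref{thm: compact} then gives a $\widetilde G$-equivariant semi-universal deformation $\pi:(\mathcal X,0)\to(S,0)$ with $\dim_{\mathbb C}T_0S=l$. I will choose it in the form produced by Donin's construction, using Theorem \ref{Gres} and Theorem \ref{gcordef}. In that form $(S,0)\subset (T^1_{(X,0)},0)$, embedded through a $\widetilde G$-equivariant linear chart, and $(\mathcal X,0)\subset(\mathbb C^n\times S,0)$. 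The actions of $\widetilde G$ on $\mathbb C^n$ and on $T^1_{(X,0)}$ are linear, after linearizing the compact action on $(\mathbb C^n,0)$ by averaging (a Cartan/Bochner argument). The $\widetilde G$-action on $(S,0)$ is induced by a finite-dimensional representation. The $\widetilde G$-action on $(\mathcal X,0)$ is given by a $\widetilde G$-equivariant family of perturbed defining equations, and possibly by the perturbed resolution maps.

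Next I will complexify. The representations of $\widetilde G$ on $\mathbb C^n$ and on $T^1_{(X,0)}\cong \mathbb C^l$ extend uniquely to holomorphic representations of $G=\widetilde G_{\mathbb C}$. For $\mathbb C^n$ this extension agrees with the given $G$-action, by uniqueness of complexification together with connectedness of $\widetilde G$. The ideal of $S$ in $\mathcal O_{\mathbb C^l,0}$ and the ideal of $\mathcal X$ in $\mathcal O_{\mathbb C^n\times\mathbb C^l,0}$ are $\widetilde G$-invariant. I will show they are $G$-invariant by the standard identity-theorem argument. For a generator $f$ and a point $x$ near $0$, the map $g\mapsto f(g\cdot x)$ is holomorphic on a neighbourhood of $\widetilde G$ in $G$. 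Since $\widetilde G$ is totally real of maximal dimension in $G$, the relation $f(g\cdot x)\in I$ propagates from $\widetilde G$ to a neighbourhood of it. Here I work with the power-series/Taylor coefficients, or with finite-dimensional $\widetilde G$-stable spaces of polynomial truncations, on which $\mathfrak g=\tilde{\mathfrak g}\otimes\mathbb C$ acts. Because the Lie algebra $\mathfrak g$ preserves the ideals and $G$ is generated by $\exp$, the ideals are invariant under the germ of the $G$-action, which is exactly what a holomorphic $(G,\widetilde G)$-action requires. This gives $(G,\widetilde G)$-actions on $(\mathcal X,0)$ and $(S,0)$ making $\pi$ equivariant.

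Semi-universality as a plain deformation is unchanged. Equivariant semi-universality needs care: given a $(G,\widetilde G)$-equivariant deformation over $(T,0)$, Theorem \ref{thm: compact} provides a $\widetilde G$-equivariant classifying map $T\to S$ together with a $\widetilde G$-equivariant isomorphism. The same identity-theorem argument, applied to the holomorphic maps $g\mapsto g\cdot\phi(g^{-1}t)-\phi(t)$ vanishing on $\widetilde G$, upgrades these to $G$-equivariance. Uniqueness follows in the same way from the $\widetilde G$-uniqueness.

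The main obstacle is the germ nature of everything: $G$ is noncompact and does not act on any fixed neighbourhood. The $\mathfrak g$-invariance of the ideals must therefore be established carefully, and the analytic continuation from $\widetilde G$ must be carried out on the level of germs. My first attempt will be to phrase invariance infinitesimally. A $\widetilde G$-invariant ideal is $\tilde{\mathfrak g}$-stable under derivations, and this stability is $\mathbb C$-linear, so it extends to $\mathfrak g$. I will then integrate the $\mathfrak g$-action to the local $G$-action, which is what the $(G,\widetilde G)$-action definition demands.
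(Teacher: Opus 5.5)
Your argument is correct. It follows the paper's overall strategy: apply Theorem \ref{thm: compact} to $\widetilde G$, then complexify. The difference is the level at which you complexify.

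\textbf{The paper's route.} It stays inside Donin's construction. It invokes a germ version of \cite[Corollary 5.1]{Doa21} to extend the $\widetilde G$-actions on the Banach analytic space $Z_{\widetilde K}$, on $S$ and on $\cF^\ubul$ to local $(G,\widetilde G)$-actions, and reads the equivariant family off from that. It uses Lemma \ref{uniextact} only to match the action on the central fibre, and does not separately address equivariant semi-universality or uniqueness.

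\textbf{Your route.} You set the Banach machinery aside once the $\widetilde G$-equivariant family exists. You embed $(\mathcal X,0)\subset(\mathbb C^n\times\mathbb C^l,0)$ with a linear diagonal $\widetilde G$-action, complexify finite-dimensional representations, and check that the defining ideals stay invariant. What this buys:
\begin{enumerate}
\item[(i)] It is more elementary and self-contained.
\item[(ii)] It sidesteps the awkward point that a non-unitary $g$ does not preserve the polydisc on which $P\mapsto gP(g^{-1}\cdot)g^{-1}$ is supposed to act on $\cF^i(\widetilde K)$.
\item[(iii)] It explicitly upgrades classifying maps and isomorphisms from $\widetilde G$-equivariance to $G$-equivariance, which the paper leaves implicit.
\end{enumerate}

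\textbf{Tightening the ideal-invariance step.} Once the actions are linear, every $g\in G$ acts on $\cO_{\mathbb C^N,0}$ by composition. So no integration of $\mathfrak g$ over shrinking neighbourhoods is needed. The subspace $(I+\mathfrak m^k)/\mathfrak m^k$ of the finite-dimensional holomorphic $G$-module $\cO_{\mathbb C^N,0}/\mathfrak m^k$ is $\widetilde G$-stable, hence $G$-stable. Indeed, its stabiliser is a complex subgroup whose Lie algebra contains $\tilde{\mathfrak g}+i\tilde{\mathfrak g}$, and $G$ is connected. You then return to $I$ itself by Krull's intersection theorem, $\bigcap_k(I+\mathfrak m^k)=I$. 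State this explicitly: the pointwise phrase ``$f(g\cdot x)\in I$'' has no meaning on its own, and the same closedness is what justifies your infinitesimal version via difference quotients.
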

A remark is order. One can not hope to have an action of non-compact groups on germs of complex spaces, in general. For example, the natural $\mathbb{C}^*$-action on $\mathbb{C}$ is global but it induces only a local $\mathbb{C}^*$ (defined in a neighborhood of the identity or a little bit better in a neighborhood of its compact subgroup $S^1$) on the germ $(\mathbb{C},0)$. This is the reason why we use the notion \textit{local holomorphic} $(G,\tilde{G})$-\textit{action} (cf. Definition \ref{locholact}).

If the semi-universal deformation has the \textit{economy property} that sufficiently close nearby fibers can not be isomorphic to the central fiber (cf. \cite[Theorem 4.8.4]{Tei78} or \cite[Theorem 1.19]{Gre20}) then the $G$-equivariant deformation permits to detect isomorphic nearby fibers.
\begin{cor} \label{quotientmod}
Let $(X,0)$ be a germ of complex spaces with a distinguished point  $0$ such that $l:=\dim_{\mathbb{C}}T^1_{(X,0)} < +\infty$. Let $G$ be a reductive complex Lie group and $\widetilde G$ be a connected real maximal compact subgroup of $G$ such that its complexification is nothing but $G$. Suppose that there exists a holomorphic $(G,\widetilde G)$-action on $(X,0)$. Let $\pi: (\mathcal{X},0) \rightarrow (S,0)$ be the $(G,\tilde G)$-equivariant semi-universal deformation  of $(X,0)$. If $x_1,x_2 \in (\mathcal{X},0)\setminus \lbrace 0\rbrace$ sufficiently close to distinguished poit $0$ of $\mathcal{X}$ such that $x_1=g_2x_2$ for some $g \in G$ then the two fibers $(\pi^{-1}(\pi(x_1)),x_1)$ and $(\pi^{-1}(\pi(x_2)),x_2)$ are isomorphic to each other.
\end{cor}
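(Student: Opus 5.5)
The plan is to use directly that the action on the total space $\mathcal{X}$ is compatible with $\pi$. Theorem \ref{thm: reductive} produces the $(G,\widetilde G)$-equivariant semi-universal deformation $\pi:(\mathcal{X},0)\to(S,0)$. Here $G$ acts (locally, near $\widetilde G$) on $\mathcal{X}$ and on $S$, and $\pi(gx)=g\pi(x)$ whenever $g$ and $x$ lie in the domain of definition of the local action.

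Fix $x_2$ and $g$ with $x_1=gx_2$, both points being close enough to $0$ that $gx_2$ and $g\pi(x_2)$ are defined. The map $\phi_g:\mathcal{X}\to\mathcal{X}$, $y\mapsto gy$, is biholomorphic on a neighbourhood of $x_2$, with inverse $\phi_{g^{-1}}$. We can arrange this by shrinking representatives: the local action is defined on a neighbourhood of $\{g\}\times\{x_2\}$, and $g^{-1}$ acts near $x_1$.

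By equivariance, $\phi_g$ carries the fiber $\pi^{-1}(\pi(x_2))$ into $\pi^{-1}(g\pi(x_2))=\pi^{-1}(\pi(gx_2))=\pi^{-1}(\pi(x_1))$. Likewise $\phi_{g^{-1}}$ maps the second fiber back into the first. Since $\pi$ is flat and the complex-space fibers are the closed subspaces cut out by $\pi^*\mathfrak m_{s}$, and $\phi_g$ pulls back $\pi^*\mathfrak m_{g s}$ to $\pi^*\mathfrak m_{s}$ (because $\pi\circ\phi_g=\psi_g\circ\pi$ with $\psi_g$ an automorphism of $S$ near $s$), the restriction of $\phi_g$ is an isomorphism of germs $(\pi^{-1}(\pi(x_2)),x_2)\cong(\pi^{-1}(\pi(x_1)),x_1)$. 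This is an isomorphism of complex space germs, not merely of sets.

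The main obstacle is the locality of the action. Since $G$ is non-compact, $gx_2$ is only defined for $x_2$ in a neighbourhood depending on $g$, which explains the phrase ``sufficiently close''. I would handle this by writing $g=k\exp(iX)$ with $k\in\widetilde G$ via the Cartan decomposition: the compact part acts on a fixed neighbourhood, and the local holomorphic $(G,\widetilde G)$-action of Definition \ref{locholact} guarantees definedness near $\widetilde G$. Compositions of local biholomorphisms then remain biholomorphisms of germs. The economy property is not needed for this implication.
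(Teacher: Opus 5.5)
Your proposal is correct and is essentially the argument the paper intends. The paper gives no written proof and calls the corollary a simple consequence of Theorem \ref{thm: reductive}: equivariance $\pi\circ\phi_g=\psi_g\circ\pi$ lets the local biholomorphism $\phi_g$ carry the fiber germ at $x_2$ isomorphically onto the fiber germ at $x_1$, with no economy property needed. Your Cartan-decomposition paragraph is superfluous, and it would not by itself bring $\exp(iX)$ for large $X$ into the domain of the local action. Nothing is lost, though, since the hypothesis $x_1=gx_2$ already presupposes that $g$ acts at $x_2$.
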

This simple corollary has a meaning in the sense of moduli. More precisely, while the base space $(S,0)$ of the semi-universal deformatiom is often considered the best approximation  of the local moduli of $(X,0)$, it is still far from being the local moduli mainly due to the existence of nontrivial automorphisms of $(X,0)$. Besides, it is even worse that automorphism groups of germs of complex spaces (even with isolated singularities) are infinite-dimensional in general (cf. \cite{Mul87}).  Corollary \ref{quotientmod} however says heuristically that one could get even closer to the local moduli by lifting reductive $(G,\tilde{G})$-actions on $(S,0)$ and then taking the ``quotient" $(S/(G,\tilde{G}),0)$. This kind of results has been confirmed in terms of stacks algebraically by \cite{AHR20} and analytically in some particular cases by \cite{Doa24}, leaving the general case as an open problem.

One can also introduce deformations of pairs (germs, Lie groups) (cf. Definition \ref{defdefpair}) in parallel with \cite{Cat78} in which pairs are (compact complex spaces, Lie groups). As immediate consequences of Theorem \ref{thm: compact} and Theorem \ref{thm: reductive}, we obtain the following two existence resutls.
\begin{thrm} \label{thm: compactpai}
Let $(X,0)$ be a germ of complex spaces with a distinguished point  $0$ such that $\dim_{\mathbb{C}} T^1_{(X,0)} < +\infty$ and $G$ a compact complex Lie group acting holomorphically on $(X,0)$. Then there exists a semi-universal deformation $\pi: (\mathcal{X},0) \rightarrow (S^G,0)$ of $((X,0),G)$ such that $T_0S^G\cong (T^1_{(X,0)})^G$.
\end{thrm}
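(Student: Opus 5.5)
We deduce Theorem \ref{thm: compactpai} from Theorem \ref{thm: compact} by restricting the $G$-equivariant semi-universal deformation to the fixed locus of the induced action on its base. We assume Definition \ref{defdefpair} means the following: a deformation of the pair $((X,0),G)$ over $(T,0)$ is a deformation $(\mathcal{Y},0)\to(T,0)$ of $(X,0)$ together with a holomorphic $G$-action on $\mathcal{Y}$ that extends the given action on the central fibre, preserves the fibres, and acts trivially on $T$.

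\emph{Step 1: the fixed base.} By Theorem \ref{thm: compact}, there is a $G$-equivariant semi-universal deformation $\pi:(\mathcal{X},0)\to(S,0)$ with $T_0S\cong T^1_{(X,0)}$. Here $G$ acts on both $\mathcal{X}$ and $S$, and $\pi$ is equivariant.
\begin{itemize}
\item Since $G$ is compact, Cartan's linearization lets us embed $(S,0)\hookrightarrow(\bC^l,0)$ equivariantly, with $G$ acting linearly on $\bC^l\cong T^1_{(X,0)}$. The identification of $T_0S$ with $T^1_{(X,0)}$ is $G$-equivariant, because the $G$-action on $S$ is the one induced on isomorphism classes of deformations.
\item Define $(S^G,0)$ as the fixed-point subgerm, cut out by the ideal generated by $f-g^*f$ for $g\in G$. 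Using the Reynolds operator, this is $(S\cap (\bC^l)^G,0)$.
\item Its Zariski tangent space is $(T_0S)^G\cong (T^1_{(X,0)})^G$. To justify this, average with Haar measure: the tangent space of a fixed locus of a compact group is the invariant part of the tangent space.
\end{itemize}

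\emph{Step 2: the family.} Set $\mathcal{X}^G:=\pi^{-1}(S^G)$. Since $S^G$ is $G$-stable and fixed pointwise, $G$ acts on $\mathcal{X}^G$ fibrewise. Hence $\mathcal{X}^G\to S^G$ is a deformation of the pair, and it is the one we claim is semi-universal.

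\emph{Step 3: completeness.} Let $(\mathcal{Y},0)\to(T,0)$ be a deformation of the pair.
\begin{itemize}
\item It is in particular a $G$-equivariant deformation of $(X,0)$ with trivial action on $T$.
\item By the semi-universality in Theorem \ref{thm: compact}, there is a $G$-equivariant map $\phi:(T,0)\to(S,0)$ and a $G$-isomorphism $\mathcal{Y}\cong\phi^*\mathcal{X}$.
\item Since $G$ acts trivially on $T$, equivariance gives $g\circ\phi=\phi$ for all $g\in G$. Hence $\phi$ factors through $S^G$.
\item The differential of $\phi$ at $0$ is determined, because it is so for $(S,0)$ and $T_0S^G\subset T_0S$.
\end{itemize}
Alternatively, and more robustly, we can run the standard argument: extend maps along small extensions $T'\to T$ and average the lifts over $G$ to make them equivariant. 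This works because $G$ is compact, so taking invariants is exact on the relevant finite-dimensional representations.

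\emph{Main obstacle.} The delicate point is making sure that the semi-universality of Theorem \ref{thm: compact} really provides \emph{equivariant} classifying maps $\phi$ with $g\circ\phi=\phi$ on the nose, not merely up to isomorphism of deformations. A mere isomorphism could only yield that $\phi(t)$ and $g\phi(t)$ carry isomorphic fibres. If the statement of Theorem \ref{thm: compact} only gives equivariance up to non-canonical choices, the fallback is to replace $\phi$ by an averaged map. To do this, work in the linear coordinates $\bC^l$, compose with the Reynolds projection onto $(\bC^l)^G$, and use uniqueness of the differential to check, order by order via formal lifting, that the averaged map still classifies $\mathcal{Y}$. Then invoke Artin-type convergence, as in Grauert's approximation theorem, to pass from formal to convergent.
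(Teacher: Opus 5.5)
Your proof is correct. It builds the same object as the paper but verifies semi-universality differently.

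Both arguments restrict the $G$-equivariant semi-universal family of Theorem \ref{thm: compact} to the fixed subgerm $(S^G,0)$, as in Remark \ref{defequvsdefpai}.

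\textbf{The paper's route.} Its one-line proof points to Corollary \ref{defvartodefrespai} and to the proof of Theorem \ref{thm: compact}. A deformation of the pair over $(T,0)$ is a map into $(Z^G_{\widetilde K},\varphi)$. The $G$-equivariant isomorphism $P\times S\cong Z_{\widetilde K}$ passes to fixed points, so such maps factor through $S^G$. The tangent space $T_0S^G$ is then the invariant part of the $G$-stable complement $T_0N$ of $\mathrm{im}\,d\delta^0$ in $\ker d\delta^1$.

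\textbf{Your route.} You use only the equivariant universal property. Since $G$ acts trivially on $T$, the equivariant classifying map satisfies $g\circ\phi=\phi$, so it factors through the fixed subgerm. Uniqueness of the differential is inherited through $T_0S^G\subset T_0S$. You compute the tangent space from an equivariant minimal embedding $(S,0)\hookrightarrow(T_0S,0)$ with linear action, which gives $S^G=S\cap(T_0S)^G$.

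\textbf{What each buys.} Your argument does not depend on how the equivariant semi-universal family was constructed, and it never uses Banach analytic spaces. The paper's argument reads off $T_0S^G\cong(T^1_{(X,0)})^G$ directly from the complex (\ref{gtangenttof}). You instead have to check that the Kodaira--Spencer isomorphism $T_0S\cong T^1_{(X,0)}$ is $G$-equivariant, which you only sketch. It does hold, because $g:\mathcal{X}\to\mathcal{X}$ covers $g:S\to S$ and identifies the pullback along $gv$ with the pullback along $v$ twisted by $g$ on the central fibre.

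\textbf{Your ``main obstacle'' paragraph.} It is unnecessary, because the paper's definition of equivariant semi-universality already supplies an equivariant $\phi$ on the nose. The fallback would also fail as written. Composing a non-equivariant $\phi$ with the Reynolds projection $\bC^l\to(\bC^l)^G$ lands in $(\bC^l)^G$ but in general not in the non-linear germ $S$, so the resulting map classifies nothing.
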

\begin{thrm} \label{thm: reductivepai}
Let $(X,0)$ be a germ of complex spaces with a distinguished point  $0$ such that $\dim_{\mathbb{C}}T^1_{(X,0)} < +\infty$. Let $G$ be a reductive complex Lie group and $\widetilde G$ be a connected real maximal compact subgroup of $G$ such that its complexification is nothing but $G$. Suppose that there exists a holomorphic $(G,\widetilde G)$-action on $(X,0)$. Then there exists a holomorphic semi-universal deformation $\pi: (\mathcal{X},0) \rightarrow (S^{(G,\tilde G)},0)$ of $((X,0),(G,\tilde G))$.
\end{thrm}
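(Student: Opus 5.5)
The plan is to derive this from Theorem \ref{thm: reductive} by restricting the equivariant semi-universal deformation to the fixed locus of the group on the base. First apply Theorem \ref{thm: reductive} to obtain a $(G,\tilde G)$-equivariant semi-universal deformation $\pi:(\mathcal{X},0)\to(S,0)$ of $(X,0)$. By construction (following the compact case, Theorem \ref{thm: compact}), $(S,0)$ can be realized as a germ $(S,0)\hookrightarrow (T^1_{(X,0)},0)$. The $\tilde G$-action on $S$ is induced from the linear representation of $\tilde G$ on $T^1_{(X,0)}$, and it extends holomorphically to a local $G$-action. I would then set
\[
S^{(G,\tilde G)} := S\cap (T^1_{(X,0)})^{\tilde G}.
\]
Since $G$ is the complexification of the connected group $\tilde G$, this coincides with the $G$-fixed germ: a complex linear subspace fixed by $\tilde G$ is fixed by its Lie algebra, hence by the complexified Lie algebra, hence by the local $G$-action. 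The desired family is $\mathcal{X}|_{S^{(G,\tilde G)}}\to S^{(G,\tilde G)}$. The $(G,\tilde G)$-action on $\mathcal{X}$ preserves the fibres over fixed points, so this restriction is a deformation of the pair $((X,0),(G,\tilde G))$ in the sense of Definition \ref{defdefpair}.

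Next I would check semi-universality. Let $\mathcal{Y}\to(T,0)$ be a deformation of the pair, that is, a deformation of $(X,0)$ carrying a $(G,\tilde G)$-action that is trivial on $T$. This is in particular a $(G,\tilde G)$-equivariant deformation with trivial action on the base. Equivariant semi-universality from Theorem \ref{thm: reductive} then gives an equivariant classifying morphism $f:(T,0)\to(S,0)$ with $\mathcal{Y}\cong f^*\mathcal{X}$ equivariantly. Because the action on $T$ is trivial and $f$ is equivariant, $f$ lands in the fixed germ $S^{(G,\tilde G)}$. Uniqueness of the differential follows from the tangent-space statement: $T_0S^{(G,\tilde G)}=(T_0S)^{\tilde G}=(T^1_{(X,0)})^{G}$, and the differential of $f$ is determined by the Kodaira--Spencer class of $\mathcal{Y}$, which is $G$-invariant. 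Completeness with respect to base extensions $T\hookrightarrow T'$ works the same way, using the equivariant extension property in Theorem \ref{thm: reductive}, since equivariant extensions into $S$ of maps into fixed loci remain in fixed loci.

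The main obstacle is making sure the fixed locus of a merely local $(G,\tilde G)$-action on $(S,0)$ is a well-defined complex subgerm, and that it agrees with the $\tilde G$-fixed locus. I would handle this with Cartan linearization for the compact group $\tilde G$: choose $\tilde G$-equivariant coordinates on $(S,0)$ by averaging with Haar measure, so that the fixed germ is the intersection of $S$ with a linear subspace. Holomorphicity of the $G$-action in the complex directions then shows that $\tilde G$-invariance implies local $G$-invariance. A secondary subtlety is that the classifying morphism $f$ for $\mathcal{Y}$ must be chosen equivariant. This requires the full equivariant version of semi-universality, not just versality, and I would take it directly from Theorem \ref{thm: reductive}.
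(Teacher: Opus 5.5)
Your proposal is correct and follows essentially the paper's route: take the $(G,\tilde G)$-equivariant semi-universal family of Theorem \ref{thm: reductive}, restrict it to the fixed germ $S^{(G,\tilde G)}$ of the base as in Remark \ref{defequvsdefpai}, and observe that the classifying maps of pair deformations, which carry the trivial action on the base, must land in that fixed germ. The paper phrases this last step through Corollary \ref{defvartodefrespai}, i.e.\ via maps into the fixed Banach analytic space $Z_K^G$; you instead deduce it formally from equivariant semi-universality, and get uniqueness of differentials from injectivity of $T_0S^{(G,\tilde G)}\to T_0S$, which is an equally valid and somewhat cleaner way to say the same thing.
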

In general, it is extremely hard to construct moduli spaces of a given type of objects due to the presence of symmetries as mentioned before.  Nevertheless,  there are two essential ways to rigidify the problem : the first one is to require the symmetries to fix more points and the second one is to classify them in the presence of symmetries. It is clear that Theorem \ref{thm: compactpai} and Theorem \ref{thm: reductivepai} fall into the second category. As more one simple consequence, one has the following rigidity result (cf. Definition \ref{defdefpairrig} for rigidity in comparison with the usual rigidity notion in \cite[Definition 2.1.1]{Gre20}), similar to a partial rigidity criterion given in \cite[Proposition 4]{Cat78}.
\begin{cor} \label{thm: reductivepairig}
Let $(X,0)$ be a germ of complex spaces with a distinguished point  $0$ such that $\dim_{\mathbb{C}}T^1_{(X,0)} < +\infty$. Let $G$ be a reductive complex Lie group and $\widetilde G$ be a connected real maximal compact subgroup of $G$ such that its complexification is nothing but $G$. Suppose that there exists a holomorphic $(G,\widetilde G)$-action on $(X,0)$. Then $T^1_{(X,0)} $ inherits a natural $\tilde{G}$-action. Furthermore, if  $ (T^1_{(X,0)})^{\tilde{G}}=\lbrace 0 \rbrace$ then the pair  $((X,0),(G,\tilde G))$ is rigid.
\end{cor}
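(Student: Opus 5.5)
The plan has two parts. First, I will make the $\tilde G$-action on $T^1_{(X,0)}$ explicit. Second, I will read off rigidity from Theorem \ref{thm: reductivepai}, using that the tangent space of the base of the pair deformation is the invariant part $(T^1_{(X,0)})^{\tilde G}$.

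\emph{The action on $T^1_{(X,0)}$.} By Definition of $T^1_{(X,0)}$ (Theorem \ref{def: t1} and the paragraph before it), a class in $T^1_{(X,0)}$ is an isomorphism class of deformations $\xi=(\mathcal{X}_\epsilon\to\mathbb{D},\ i:(X,0)\hookrightarrow \mathcal{X}_\epsilon)$ over the double point. For $g\in\tilde G$, acting holomorphically on $(X,0)$ through the given $(G,\tilde G)$-action, I set $g\cdot\xi:=(\mathcal{X}_\epsilon\to\mathbb{D},\ i\circ g^{-1})$, that is, I twist the identification of the central fiber. This is well defined on isomorphism classes and is a group action. It is $\mathbb{C}$-linear because the vector space structure is defined through fibered sums over $\mathbb{D}\times\mathbb{D}$ and scalar multiplication on $\mathbb{D}$, both of which commute with reparametrizing the central fiber. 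Continuity (indeed real-analyticity) in $g$ can be checked after embedding $(X,0)\hookrightarrow(\mathbb{C}^n,0)$ $\tilde G$-equivariantly and linearly. In that setting $T^1$ is a quotient of $\Hom(I_X/I_X^2,\cO_{X,0})$, the action there is induced by the linear action on the generators, and the quotient map is equivariant. Equivalently, this is the action induced on the tangent space of the base $(S,0)$ of the $(G,\tilde G)$-equivariant semi-universal deformation of Theorem \ref{thm: reductive}, since $T_0S\cong T^1_{(X,0)}$.

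\emph{Rigidity.} By Theorem \ref{thm: reductivepai}, the pair $((X,0),(G,\tilde G))$ admits a semi-universal deformation over $(S^{(G,\tilde G)},0)$. Its construction from Theorem \ref{thm: reductive} is as the $\tilde G$-fixed locus $S^{\tilde G}$ of the equivariant semi-universal base $(S,0)$, exactly as in Theorem \ref{thm: compactpai}. I will show that $T_0 S^{(G,\tilde G)}\cong (T^1_{(X,0)})^{\tilde G}$. The argument uses the fact that, since $\tilde G$ is compact, the action on $(S,0)$ can be linearized: averaging a local embedding over Haar measure gives a $\tilde G$-equivariant embedding $(S,0)\hookrightarrow (T_0S,0)$ with linear action. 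The fixed locus then satisfies $S^{\tilde G}=S\cap (T_0S)^{\tilde G}$, so its Zariski tangent space is contained in $(T_0S)^{\tilde G}=(T^1_{(X,0)})^{\tilde G}$.

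If this invariant space is $\{0\}$, then $S^{(G,\tilde G)}$ is the reduced point. Semi-universality then says that every deformation of the pair is pulled back from a point, hence is trivial. This is rigidity in the sense of Definition \ref{defdefpairrig}. If a $G$-action on the deformation, rather than only a $\tilde G$-action, is required, I pass to the complexification via holomorphic extension of the action maps, as in Theorem \ref{thm: reductive}.

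\emph{Main obstacle.} I expect the main difficulty to be checking that the base of the pair-semi-universal deformation really has tangent space exactly (or at least contained in) $(T^1)^{\tilde G}$. This requires compatibility between the $\tilde G$-action on $T^1$ defined intrinsically above and the induced linear action on $T_0S$. My approach is to verify it directly on first-order pullbacks along $\mathbb{D}\to(S,0)$, using the equivariance of the family $\mathcal{X}\to S$.
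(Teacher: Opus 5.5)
Your proposal is correct and follows the paper's own argument. The paper likewise takes the base $S^{(G,\tilde G)}$ of the pair-semi-universal deformation from Theorem \ref{thm: reductivepai} as a fixed locus in $(S,0)$, bounds $\dim_{\mathbb{C}} T_0 S^{(G,\tilde G)} \le \dim_{\mathbb{C}} T_0 S^{\tilde G} = \dim_{\mathbb{C}} (T^1_{(X,0)})^{\tilde G} = 0$, and deduces rigidity from semi-universality over the reduced point. Your explicit construction of the $\tilde G$-action on $T^1_{(X,0)}$ and your Haar-averaged linearization giving $T_0 S^{\tilde G} \subseteq (T_0 S)^{\tilde G}$ fill in steps that the paper asserts without justification.
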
 In the above corollary, if we take $G$ to be a finite group and form the quotient germ $(X/G,0)$ together with the natural map $(X,0) \rightarrow (X/G,0)$, then the rigidity of $(X/G,0)$ can be described in terms of that of the pair $((X,0),G)$. For example, it follows from \cite[Lemma 8]{Ste88} that if  $(X,0)$ is Cohen-Macaulay and the natural map $\sigma: (X,0) \rightarrow (X/G,0)$ is \'{e}tale in codimension two then the pair $((X,0),G)$ is rigid if and only if the germ $(X/G,0)$ is rigid. It happens quite often that computing $(T^1_{(X,0)})^G$ is less difficult than computing $T^1_{(X/G,0)}$. For example, we can consider $X=\lbrace x^2+y^2+z^2=0 \rbrace \subset \mathbb{C}^3$ with a $\mathbb{Z}_2$-action by reflections.

One also has a criterion for group action extensions through deformations, similar to \cite[Proposition 5]{Cat78}.
\begin{cor} \label{thm: compactpaipro}
Let $(X,0)$ be a germ of complex spaces such that $\dim_{\mathbb{C}} T^1_{(X,0)} < +\infty$ and $G$ a compact complex Lie group acting holomorphically on $(X,0)$. If $G$ acts trivially on $T^1_{(X,0)}$ and the base $(S^G,0)$ is smooth then the gems $(S,0)$ and $(S^G,0)$ coincide. In particular, for any deformation $(\mathcal{Y},0) \rightarrow (T,0)$, the $G$-action on $(X,0)$ extends to an $G$-action on $(\mathcal{Y},0)$.
\end{cor}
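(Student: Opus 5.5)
We argue by comparing the two semi-universal deformations at the level of tangent spaces and then using smoothness. By Theorem \ref{thm: compact} there is a $G$-equivariant semi-universal deformation $\pi:(\mathcal{X},0)\to(S,0)$ of $(X,0)$ with $T_0S\cong T^1_{(X,0)}$. The induced $G$-action on $(S,0)$ linearizes on $T_0S$ to the natural action on $T^1_{(X,0)}$. By Theorem \ref{thm: compactpai}, the semi-universal deformation of the pair $((X,0),G)$ has base $(S^G,0)$ with $T_0S^G\cong (T^1_{(X,0)})^G$.

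The first step is to identify $(S^G,0)$ with the fixed-point germ of the $G$-action on $(S,0)$. This is natural, since $G$-equivariant deformations of the pair are exactly those pulled back along $G$-invariant maps into $S$. In any case we get a closed embedding $(S^G,0)\hookrightarrow(S,0)$, compatible with the families and inducing $T_0S^G=(T_0S)^G\subset T_0S$. Because $G$ is compact, Cartan's linearization lemma lets us choose $G$-linear coordinates on an ambient smooth germ $(\mathbb{C}^l,0)\supset(S,0)$ with $l=\dim T^1_{(X,0)}$. In these coordinates the fixed locus is $(S,0)\cap((\mathbb{C}^l)^G,0)$.

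The second step uses the hypothesis that $G$ acts trivially on $T^1_{(X,0)}$. Then $(\mathbb{C}^l)^G=\mathbb{C}^l$, so $\dim T_0S^G=\dim T^1_{(X,0)}=l$. Since $(S^G,0)$ is smooth of dimension $l$ and embeds in $(S,0)\subset(\mathbb{C}^l,0)$, it is all of $(\mathbb{C}^l,0)$. Hence $(S^G,0)=(S,0)$, and both are smooth. Equivalently, $G$ acts trivially on $(S,0)$, so every fiber of $\pi$ is $G$-stable.

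For the final claim, let $(\mathcal{Y},0)\to(T,0)$ be any deformation of $(X,0)$. By semi-universality of $\pi$ it is the pullback of $\pi$ along some map $f:(T,0)\to(S,0)=(S^G,0)$. The family over $S^G$ is the family of the pair, so $G$ acts on $(\mathcal{X},0)$ fiberwise over $S$. Pulling back this action along $f$ gives a $G$-action on $\mathcal{Y}\cong T\times_S\mathcal{X}$, acting trivially on $T$ and extending the given action on $(X,0)$.

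The main obstacle is the comparison step, namely justifying that the pair-deformation base really sits inside $(S,0)$ with tangent space $(T^1)^G$, compatibly with the families. I would derive this from the construction in Theorem \ref{thm: compactpai} as the $G$-fixed part of the equivariant Donin construction. Linearizability of compact group actions (averaging via Haar measure) is what makes the fixed locus a well-behaved subgerm.
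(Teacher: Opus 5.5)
Your proposal is correct and follows the paper's proof. The inclusion $(S^G,0)\hookrightarrow(S,0)$ is an isomorphism on tangent spaces once $G$ acts trivially on $T^1_{(X,0)}$, and smoothness of $(S^G,0)$ then forces equality; the paper cites the implicit function theorem, where you give the equivalent dimension count inside $(\mathbb{C}^l,0)$. The extension claim follows from semi-universality by pulling back the now fiberwise $G$-action.

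As a side remark, your description of the fixed locus as $(S,0)\cap((\mathbb{C}^l)^G,0)$ in $G$-linear coordinates already gives $(S^G,0)=(S,0)$ as soon as $(\mathbb{C}^l)^G=\mathbb{C}^l$. So the smoothness hypothesis is not actually needed for that coincidence.
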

The next contribution of this paper is to show that the reductivity assumption in this story is optimal. Namely, we provide a simple germ of complex spaces together with a local holomorphic action of a non-reductive group such that this action can not be equivariantly extended to its semi-universal deformation (cf. Section \ref{sec: op} for more details). Thus, it is consistent with the case of algebraic varieties and compact complex manifolds treated in \cite{Doa20}. Hence, together with the main results, it gives another vivid illustration for the philosophy : ``\textit{Reductive subgroups of the automorphism group of the analytic object
under deformation can be (at least locally) analytically extended to its semiuniversal deformation}" given at the end of \cite{Doa22}.

Theorem \ref{thm: compact} also has a simple application to actions on Milnor fiber. Namely, let $f: (\mathbb{C}^{n+1},0)\rightarrow (\mathbb{C},0)$ is a germ of complex spaces $(X,0)$ with isolated singularity at $0$, it is well-known that there exists a local fibration over $\mathbb{C} \setminus \lbrace 0 \rbrace$ with fiber $F$, called Milnor fibration. If $G$ is a finite group acting linearly on $(\mathbb{C}^{n+1},0)$ leaving $f$ invariant then $G$ acts naturally on the cohomology $H^n(F,\mathbb{C})$ and on the semi-universal space $S$ of $(X,0)$ (which is not obvious since it actually follows from  Slodowy's result on $G$-equivariant semi-universal deformations for $G$ a finite group, given in \cite{Slo78}) and eventually on its structural sheaf $\mathcal{O}_{S}$. Note also that $G$ preserves the K\"{a}hler form $\omega =dx_0\wedge\ldots\wedge dx_{n+1}$ on $\mathbb{C}^{n+1}$, hence gives rise to a character $\lambda: G \rightarrow \mathbb{C}^*$ given by the determinant. There are several results relating $H^n(F,\mathbb{C})$ and $\mathcal{O}_S$ as $G$-modules : \cite{Arn78} for $G$ a single reflexion; \cite{OS78a} for $f$ a weighted homogeneous function with applications to the computation of the character and the eigenvalues of the $G$-action in \cite{OS78b}. The most general result so far is due to Wall stating that if $G$ is finite and $f$ is an arbitrary hypersuface singularity then $H^n(F,\mathbb{C})$ is isomorphic to $\mathcal{O}_S\otimes_{\mathbb{C}}\lambda$ as $G$-modules (see \cite[Theorem]{Wal80a}). 

Here, we generalize Wall's theorem to the case that $G$ is a compact Lie group acting holomorphically on $(\mathbb{C}^{n+1},0)$ leaving $f$ invariant. More specially, since $G$ is compact, its action on $\mathbb{C}^{n+1}$ can be linearized after a change of coordinates by Bochner's linearization theorem (cf. \cite{Kau65, Akh95}). As a sequence, one still gets a character $\lambda: G \rightarrow \mathbb{C}^*$. In addition, for the sake of Theorem \ref{thm: compact}, $G$ also acts on the semi-universal space $S$ and then on its structural sheaf $\mathcal{O}_S$.
\begin{thrm} \label{thm: mil} With the above setting, if $G$ is a complex compact Lie group and $f$ is an arbitrary isolated hypersurface singularity, there exists a natural isomorphism between $H^n(F,\mathbb{C})$ and $\mathcal{O}_S\otimes_{\mathbb{C}}\lambda$ as $G$-modules.
\end{thrm}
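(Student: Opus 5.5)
The plan is to follow Wall's strategy for finite groups. The finiteness of $G$ would be replaced by two facts: the complete reducibility of representations of compact groups, and the rigidity of their characters.

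\emph{Setup.} By Bochner's linearization theorem we may assume that $G$ acts linearly on $\mathbb{C}^{n+1}$ and that $f$ is $G$-invariant. Since $G$ is compact, the Jacobian ideal $J_f$ is $G$-stable, so the Milnor algebra $Q_f=\mathcal{O}_{\mathbb{C}^{n+1},0}/J_f$ is a finite-dimensional $G$-module of dimension $\mu=\dim H^n(F,\mathbb{C})$. By complete reducibility we choose a $G$-stable complement spanned by representatives $g_1,\dots,g_\mu$. The unfolding $f+\sum_i t_ig_i$ is then $G$-equivariant, and by the uniqueness part of Theorem \ref{thm: compact} it is the $G$-equivariant semi-universal deformation. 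This identifies the $G$-module $\mathcal{O}_S$ of the statement, restricted to the relevant finite part (tangent/Jacobian data), with $Q_f$. Hence it suffices to produce a $G$-isomorphism $H^n(F,\mathbb{C})\cong Q_f\otimes\lambda$.

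\emph{The twist.} The module $\Omega_f=\Omega^{n+1}/df\wedge\Omega^n$ equals $Q_f\cdot dx_0\wedge\cdots\wedge dx_n$. Since $G$ acts on the volume form through the determinant $\lambda$, we get $\Omega_f\cong Q_f\otimes\lambda$ as $G$-modules. Next consider the Brieskorn lattice $H''=\Omega^{n+1}/df\wedge d\Omega^{n-1}$. It is a free $\mathbb{C}\{t\}$-module of rank $\mu$, and $H''/tH''\cong\Omega_f$. Because $f$ is invariant, $G$ acts $\mathbb{C}\{t\}$-linearly on $H''$ and this action commutes with the Gauss–Manin connection. Thus $H''$ defines a $G$-equivariant holomorphic vector bundle $E$ over a small disc $\Delta$, with $G$ acting trivially on the base. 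For $t\neq0$ there is the map $\omega\mapsto[\omega/df|_{F_t}]$ from the fibre $E_t$ to $H^n(F_t,\mathbb{C})$. This map is an isomorphism and is $G$-equivariant, because $G$ preserves $f$ and hence each Milnor fibre $F_t$. So the fibre of $E$ at $t\neq0$ is $H^n(F,\mathbb{C})$ as a $G$-module, and the fibre at $0$ is $Q_f\otimes\lambda$.

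\emph{Comparing the fibres (main obstacle).} For finite groups Wall can argue by counting fixed points and traces. Here $G$ may be a positive-dimensional compact group, so I would argue instead by rigidity of characters. The function $t\mapsto\chi_{E_t}(g)$ is continuous on $\Delta$ for each fixed $g$. The characters of $\mu$-dimensional representations of $G$ form a discrete set in $L^2(G)$: by the orthogonality relations, any two distinct such characters are at distance at least $\sqrt2$. Also $t\mapsto\chi_{E_t}$ is continuous into $L^2(G)$, by uniform continuity on the compact set $G\times\overline{\Delta'}$ for a smaller disc $\Delta'$. Hence $\chi_{E_t}$ is constant in $t$. Equal characters then give $E_0\cong E_t$ as $G$-modules, i.e. $H^n(F,\mathbb{C})\cong Q_f\otimes\lambda\cong\mathcal{O}_S\otimes_{\mathbb{C}}\lambda$.

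To make the isomorphism natural rather than merely abstract, I would decompose $E$ into $G$-isotypic subbundles $E^\rho$. Each $E^\rho$ is itself a holomorphic subbundle, being the image of the projector $\int_G\overline{\chi_\rho(g)}\,g\,dg$, which is holomorphic in $t$. One then trivializes each $E^\rho$ over $\Delta$ by a $G$-equivariant frame; this gives a canonical-up-to-homotopy identification of the fibres. The delicate points I expect are the equivariance of the Brieskorn–Milnor identification for $t\neq0$, and checking that the $G$-action on $H''$ is well defined after the linearizing coordinate change. Both should follow because every construction involved is functorial in $G$-equivariant automorphisms preserving $f$.
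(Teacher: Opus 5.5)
\paragraph{The gap: the special fibre of your bundle is not $\Omega_f$.}
Your route differs from the paper's, which simply reruns Wall's argument and notes that representations of a compact group are completely reducible, so all extensions split. Your rigidity-of-characters step is sound, and it does give a $G$-isomorphism $H^n(F,\bC)\cong H''/tH''$.

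The problem is the assertion $H''/tH''\cong\Omega_f$:
\begin{itemize}
\item In $H''$, the submodule $tH''$ is the image of $f\,\Omega^{n+1}$.
\item The submodule with quotient $\Omega_f$ is instead $H'=df\wedge\Omega^n/df\wedge d\Omega^{n-1}=\partial_t^{-1}H''$.
\item Both have colength $\mu$. The inclusion $tH''\subset H'$ holds iff $f\Omega^{n+1}\subset df\wedge\Omega^n$, i.e.\ iff $f\in J_f$, which by Saito's theorem means $f$ is quasi-homogeneous.
\end{itemize}
So for a general isolated singularity the special fibre of your bundle $E$ is $H''/tH''$. This is a different $\mu$-dimensional quotient of $\Omega^{n+1}$, with no natural map to or from $\Omega_f$. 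As written, your argument only covers the quasi-homogeneous (Orlik--Solomon) case.

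\paragraph{What is missing.}
You need a $G$-equivariant comparison of $H''/tH''$ with $H''/\partial_t^{-1}H''$, and this is exactly where complete reducibility has to enter. One way to do it uses the Kashiwara--Malgrange $V$-filtration on the Gauss--Manin system. It is canonical, hence $G$-stable. For $\beta\gg0$ one has $V^{\beta-1}\subset tH''\cap\partial_t^{-1}H''$ and $tV^{\beta-1}=V^{\beta}=\partial_t^{-1}V^{\beta-1}$. Since $t$ and $\partial_t^{-1}$ are injective and $G$-equivariant on $H''$ ($f$ being invariant), you get in the representation ring
\[
[H''/tH'']=[H''/V^{\beta}]-[H''/V^{\beta-1}]=[H''/\partial_t^{-1}H'']=[\Omega_f].
\]
For compact $G$, equality of classes gives an isomorphism.

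\paragraph{A related error about the semi-universal deformation.}
Your sentence invoking the uniqueness part of Theorem \ref{thm: compact} fails for the same underlying reason. The family $f+\sum t_ig_i$, with $g_i$ spanning a complement of $J_f$, is the semi-universal \emph{unfolding} of $f$. As a deformation of $(X,0)$ it is versal but not semi-universal unless $\tau=\mu$, because the base produced by Theorem \ref{thm: compact} has $T_0S\cong T^1_{(X,0)}=\cO/(f,J_f)$. You should read the right-hand side as Wall's $Q_f\otimes\lambda$ and argue for that directly. Both slips disappear exactly when $f\in J_f$.
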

 
To conclude this subsection, we should mention that there is a much more general approach given in \cite{BK87}, which can be used to construct semi-universal deformations of almost every analytic object of interest : holomorphic maps, principal bundles, coherent analytic sheaves, compact complex manifolds, compact complex spaces, isolated singularities, etc (cf. \cite[Section 4]{BK87}) as immediate corollaries. A brief review of it is given in \cite{Bin87} where the author even claimed that the method could be made equivariant and that the details would be given in subsequent papers \cite[Remark 4.8.(3)]{Bin87}, which seem to have never appeared since then. If it was indeed the case, one would have a unified theory of equivariant deformations of analytic objects. Hence, we really hope to be able to address this problem in future work.
\begin{problem} Can the general formalism in \cite{BK87} be made $G$-equivariant for $G$ a reductive complex Lie group ?
\end{problem}
\subsection{Formal deformation theory and Grauert's approximation}
Formal deformation theory was systematically studied by M. Schlesssinger \cite{Sch68} where the central result is  a criterion for a functor of artinian rings to admit a formal semi-universal element. One application is the existence of formal semi-universal deformations of affine schemes with at most isolated singularities or projective schemes. Rim reconsidered the theory of functors of artinian rings in the presence of group actions which come mainly from the automorphism group of the objects under deformation. He showed that if the group is linearly reductive, we can even put equivariant structures on formal semi-universal elements (cf. \cite{Rim80}), generalizing Schlessinger's results. In particular, for projective schemes or affine schemes with at most isolated singularities with linearly reductive automorphism groups, the formal semi-universal deformations can be made formally equivariant. In \cite[Section 3]{Slo01}, Slodowy  also mentioned that \cite{Rim80} should already give equivariant semi-universal deformations in the case of germs of algebraic varieties. However, it is apparently only a formal result where the convergence has not been justified yet, to the author's knowledge. This is also a crucial difference between the algebraic world and the analytic one. Thus, the problem in the introduction is still open even for algebraic singularities in general. 

Returning to our main problem, given a germ of complex spaces $(X,0)$, we can consider the functor of artinian rings associated to the deformation problem of $(X,0)$. Following once again Schlessinger's formalism, it can be checked  that this functor admits a formal semi-universal element, i.e. a formal semi-universal deformation (cf. \cite[Theorem 10.3.16]{JP00}) under the finite-dimensional tangency condition. In terms of analytic equations, it means that we have a formal solution. The next step is to use Grauert's powerful approximation theorem (see Theorem \ref{grauapp}) - an upgraded version of Artin's approximation theorem (cf. Theorem \ref{artappr}) to produce a convergent solution, approximating the formal one and leaving the semi-universality untouched (cf. \cite{Gra72} or \cite[Theorem 10.3.21]{JP00}). This gives rise to a semi-universal deformation of $(X,0)$. Recently, Greuel and Pfister have generalized Grauert's approximation theorem to convergent power series over arbitrary real valued fields of any characteristic. Again, a direct consequence is the existence of semi-universal deformations of isolated singularities, as  expected (cf. \cite{GL25b}).

Finally, we let the linearly reductive group $G$ rejoin the game. Inspired by the above construction, in order to respond to our question, one can verify that Rim's formalism \cite{Rim80} can be used to produce a $G$-equivariant formal solution.  The natural idea now is to prove an appropriate $G$-equivariant version of Grauert's approximation theorem - Theorem \ref{grauapp} and then to use it to produce a $G$-equivariant convergent solution approximating the formal one. To this end, one could hope to imitate the details of \cite{BM79} where the authors proved an equivariant version of Artin's approximation theorem  (cf. Theorem \ref{equiart}). The main idea could be to take the average with respect to the group action which is possible if the group is compact (since it implies that Haar measures are available) and then pass to reductive groups by complexification. 
All of these motivate us to pose the following question.
\begin{question} Does there exist a $G$-equivariant version of Grauert's approximation theorem where $G$ is a reductive complex Lie group ? 

\end{question}

\subsection*{Organization} In Section \ref{sec: defger}, we recall the theory of classical deformations of germs of complex spaces and its fundamental results. Next, we define and study equivariant deformations in general in Section \ref{sec: defgerequ} and over the double point in particular in Section \ref{sec: doupoi}. After that, the existence of equivariant semi-universal deformations and its consequences are given in Section \ref{sec: exi}. Several examples are provided in Section \ref{sec: op} in order to justify the reductivity assumption. Finally, Appendix \ref{sec: anaequapp} is devoted to a brief summary of the approximation theorems mentioned in the introduction for the reader's convenience.

\begin{acknowledgement} The author would like to warmly thank Prof. S. Kosarew for explaining some basic properties of Banach analytic spaces and for pointing out the reference \cite{BK87} to him. This research is funded by a start-up research fund from the Harbin Institute of Technology.
\end{acknowledgement}

\section{Deformations of germs of complex spaces} \label{sec: defger}In this section, we recall basic definitions and fundamental results on the theory of deformations of germs of complex spaces. The curious reader is referred to \cite{JP00, GLS25} for a complete treatment of the subject.
\begin{defn} \label{defger}$\mathrm{(}$\cite[Definition II.1.1]{GLS25}$\mathrm{)}$ Let $(X,0)$ and $(S,0)$ be germs of complex spaces. A \textit{deformation of} $(X,0)$ \textit{over} $(S,0)$ is a flat morphism $\pi:(\cX,0)\rightarrow (S,0)$ of germs of complex spaces such that we have an isomorphism between $(\pi^{-1}(0),0)$ and $(X,0)$. In other words, the following diagram 
\[
\begin{tikzcd}
(X,0) \arrow[r,hook, "\iota"] \arrow[d]
& (\mathcal X, 0) \arrow[d, "\pi"] \\[4pt]
\{0\}\arrow[r, hook]
& (S,0)
\arrow[ul, phantom, "\square", very near start, pos=0.52]
\end{tikzcd}
\]
is Cartesian.
\end{defn}

\begin{defn} $\mathrm{(}$\cite[Definition II.1.2]{GLS25}$\mathrm{)}$ Let $\pi_1:(\cX_1,0)\rightarrow (S_1,0)$ and $\pi_2:(\cX_2,0)\rightarrow (S_2,0)$ be two deformations of $(X,0)$. A \textit{morphism} between them is a pair $(\Phi,\phi)$ of two morphisms of germs of complex spaces  such that the following diagram
\[
\begin{tikzcd}[row sep=1.5cm]
& (\cX_1,0) \arrow[r, "\Phi"] \arrow[d,"\pi_1" swap, pos=0.45]
& (\cX_2,0) \arrow[d, "\pi_2" swap, pos=0.45] \\
(X,0) \arrow[ru, hook, "\iota_1"] \arrow[d]\arrow[rru,hook, "\iota_2" swap, pos=0.65]
& (S_1,0) \arrow[r,"\phi" ] 
& (S_2,0) \\
\{0\} \arrow[ru, hook]\arrow[rru, hook]
& 
& 
\end{tikzcd}
\]
is commutative. In particular, if $(S_1,0)=(S_2,0)=(S,0)$ and $\phi= \mathrm{Id}$ and $\Phi$ is an isomorphism, then we say that the two given deformations over the same base $(S,0)$ are \textit{isomorphic}.
\end{defn}
\begin{rmk} If there is a morphism $(\Phi,\phi)$ between deformations of $(X,0)$ over the same base $(S,0)$, then $\Phi$ is  an isomorphism due to for example \cite[Lemma I.1.86]{GLS25}. Hence, the given two deformations are isomorphic to each other.
\end{rmk}
The following result provides a description of deformations of $(X,0)$ in terms of generators and relations.  This allows to facilitate the study of (equivariant) deformations, which we are going to see shortly.
\begin{lemma}$\mathrm{(}$\cite[Corollary II.1.6]{GLS25}$\mathrm{)}$ \label{defemb} If $(X,0)$ is given as subgerm of $(\bC^n,0)$ for some $n$ then any deformation $\pi:(\cX,0)\rightarrow (S,0)$ of $(X,0)$ can be embedded. In other words, the following diagram 

\begin{center}
\begin{tikzpicture}[every node/.style={anchor=center}, scale=0.85, transform shape]
\node (X0) at (0,4) {$(X,0)$};
\node (Xc) at (3,4) {$(\mathcal X,0)$};
\node (Cn) at (0,2) {$(\mathbb C^n,0)$};
\node (CxS) at (3,2) {$(\mathbb C^n,0)\times(S,0)$};
\node (Z0) at (0,0) {$\{0\}$};
\node (S0) at (3,0) {$(S,0)$};

\draw[->] (X0) -- (Xc); 
\draw[->] (Cn) -- (CxS); 
\draw[->] (Z0) -- (S0);

\draw[->] (X0) -- (Cn) node[midway,left] {$\iota_0$}; 
\draw[->] (Xc) -- (CxS) node[midway,right] {$\iota$}; 
\draw[->] (Cn) -- (Z0); 
\draw[->] (CxS) -- (S0) node[midway,right] {$\mathrm{pr}_2$}; 

\node at ($(X0)!0.5!(CxS)$) {$\square$};
\node at ($(Cn)!0.5!(S0)$) {$\square$};

\draw[->, bend left=70] (Xc.south east) to node[right,pos=0.5] {$\pi$} (S0.north east);
\end{tikzpicture}
\end{center}
is Cartesian.

\end{lemma}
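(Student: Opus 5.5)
The plan is the standard argument: embed $(\cX,0)$ into a germ of the form $(\bC^N,0)\times(S,0)$, then use flatness to cut $N$ down to $n$. First I realize $(\cX,0)$ as a subgerm of some $(\bC^N,0)\times(S,0)$ compatibly with $\pi$. Choose generators $x_1,\dots,x_n$ of the maximal ideal of $\cO_{X,0}$ coming from the embedding $\iota_0$. Since $\cO_{X,0}=\cO_{\cX,0}/\mathfrak m_S\cO_{\cX,0}$, I lift the $x_i$ to elements $\tilde x_1,\dots,\tilde x_n\in\cO_{\cX,0}$. Together with $\pi$, these define a morphism
$$\iota=(\tilde x_1,\dots,\tilde x_n,\pi):(\cX,0)\to(\bC^n,0)\times(S,0).$$
The induced map $\cO_{\bC^n\times S,0}=\cO_{S,0}\{x_1,\dots,x_n\}\to\cO_{\cX,0}$ becomes surjective after reducing modulo $\mathfrak m_S$, because it reduces to the given surjection $\cO_{\bC^n,0}\to\cO_{X,0}$. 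Moreover, $\cO_{\cX,0}$ is a finite module over this ring, in fact cyclic modulo the maximal ideal. By the Weierstrass finiteness theorem together with Nakayama's lemma, or equivalently by the standard criterion that a morphism of germs which is a closed embedding on the special fibre is a closed embedding, the map is surjective. Hence $\iota$ is a closed embedding over $(S,0)$.

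Next I verify that both squares in the diagram are Cartesian. The lower square is Cartesian trivially, since the fibre of $\mathrm{pr}_2$ over $0$ is $(\bC^n,0)$. For the upper square, the fibre product $(\cX,0)\times_{(\bC^n,0)\times(S,0)}(\bC^n,0)$ equals $\pi^{-1}(0)$ as a closed subgerm. Indeed, $(\bC^n,0)\hookrightarrow(\bC^n,0)\times(S,0)$ is defined by the ideal $\mathfrak m_S$, so its preimage in $\cX$ is cut out by $\mathfrak m_S\cO_{\cX,0}$. That subgerm is $(X,0)$, and by construction $\iota$ restricts to $\iota_0$ on it. The outer triangle commutes because $\pi=\mathrm{pr}_2\circ\iota$ by definition of $\iota$.

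The main obstacle is the surjectivity step: lifting the finitely many coordinate functions $x_i$ is routine, but concluding that the map of analytic algebras is surjective requires finiteness. I would invoke the analytic Nakayama lemma for local analytic algebras. Since $\cO_{\cX,0}/\mathfrak m_{\bC^n\times S}\cO_{\cX,0}=\bC$, the map $\cO_{\bC^n\times S,0}\to\cO_{\cX,0}$ is finite by Weierstrass. An $A$-module $M$ with $M/\mathfrak m_AM$ generated by $1$ is then generated by $1$. Flatness of $\pi$ is not needed for the embedding itself; it only ensures that the result is again a deformation.
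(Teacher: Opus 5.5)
Your proposal is correct and follows the same route the paper relies on. The paper does not prove this lemma but quotes it from \cite{GLS25}, and its proof of Lemma \ref{gdefemb} shows the construction it has in mind: take the embedding $(\tilde\rho,\pi)$ with $\tilde\rho$ a lift of $\iota_0$, and show it is a closed embedding by Weierstrass finiteness plus Nakayama, exactly as you do (your remark that flatness is not needed for this step is also accurate).
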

In this way, the set of isomorphism classes of deformations of $(X,0)$ over the double point $\mathbb{D}:=\spec(\bC[\epsilon])$ with $\epsilon^2=0$, denoted by $T^1_{(X,0)}$ can be described explicitly as follows. Let $I=\lbrace f_1,\ldots,f_k \rbrace$ be the defining ideal of $(X,0)$ in $(\bC^n,0)$ where $f_i \in \cO_{\bC^n,0}$ and let $\Theta_{\bC^n,0}$ be the free $\cO_{\bC^n,0}$-module of derivarions of $\cO_{\bC^n,0}$. Consider the natural map \[
\begin{aligned}
\alpha : \Theta_{\bC^n,0}&\longrightarrow \Hom_{\cO_{\bC^n,0}}(I,\cO_{X,0}) \\
\theta &\longmapsto (f \mapsto \theta(f)).
\end{aligned}
\]
\begin{thrm} \label{def: t1} $\mathrm{(}$\cite[Lemma 10.2.7 and Theorem 10.2.13]{JP00}, \cite[Propsition II.1.25]{GLS25}$\mathrm{)}$ Let $(X,0) \subset (\bC^n,0)$ defined by an ideal $I\subset \cO_{\bC^n,0}$. Then
\begin{enumerate}
\item[$\mathrm{(i)}$] deformations of $(X,0)$ over the double point $\mathbb{D}$ are in bijective correspondence with elements of $\Hom_{\cO_{\bC^n,0}}(I,\cO_{X,0})$;
\item[$\mathrm{(ii)}$] $T^1_{(X,0)}\cong \mathrm{coker}(\alpha)$.
\end{enumerate}
\end{thrm}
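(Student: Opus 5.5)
We prove (i) by writing an arbitrary deformation over $\mathbb{D}$ in embedded form and reading off a homomorphism $I\to\cO_{X,0}$; (ii) then follows by identifying which homomorphisms give trivial deformations.

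For (i), we first use Lemma \ref{defemb} to embed any deformation $(\cX,0)\to\mathbb{D}$ in $(\bC^n,0)\times\mathbb{D}$. Its ideal then has the form $I_\epsilon\subset \cO_{\bC^n,0}[\epsilon]$ with $I_\epsilon \bmod \epsilon = I$.

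\begin{itemize}
\item \emph{From a flat ideal to a homomorphism.} Flatness over $\bC[\epsilon]$ is equivalent, by the local criterion, to exactness of $0\to \cO_{X,0}\xrightarrow{\epsilon}\cO_{\cX,0}\to\cO_{X,0}\to0$. This is equivalent to $I_\epsilon\cap \epsilon\,\cO_{\bC^n,0}[\epsilon]=\epsilon I$. So every $f\in I$ lifts to some $f+\epsilon g\in I_\epsilon$, and $g$ is well defined modulo $I$. We set $\varphi(f)=\bar g\in\cO_{X,0}$. It is $\cO_{\bC^n,0}$-linear, since $h(f+\epsilon g)=hf+\epsilon hg$.
\item \emph{From a homomorphism to a flat ideal.} Given $\varphi\in\Hom_{\cO_{\bC^n,0}}(I,\cO_{X,0})$, we take $I_\varphi=\{f+\epsilon g: f\in I,\ \bar g=\varphi(f)\}$. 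It contains $\epsilon I$ and is an ideal: for $a+\epsilon b$ we have $(a+\epsilon b)(f+\epsilon g)=af+\epsilon(ag+bf)$, and $\overline{bf}=0$. The intersection condition above holds by construction, so $I_\varphi$ is flat.
\item \emph{Bijection.} The two constructions are mutually inverse. This gives a bijection between embedded deformations with fixed embedding and $\Hom(I,\cO_{X,0})$.
\end{itemize}

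Strictly, (i) concerns deformations with the embedding fixed, and we read it that way, as in \cite{JP00}.

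For (ii), we analyse when two embedded deformations are isomorphic as abstract deformations.

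\begin{itemize}
\item \emph{Isomorphisms come from derivations.} By the same lifting argument as in Lemma \ref{defemb}, any isomorphism of deformations over $\mathbb{D}$ is induced by an automorphism of $(\bC^n,0)\times\mathbb{D}$ over $\mathbb{D}$ reducing to the identity modulo $\epsilon$. Such automorphisms are exactly $\Phi_\theta: h\mapsto h+\epsilon\,\theta(h)$ with $\theta\in\Theta_{\bC^n,0}$.
\item \emph{Effect on the homomorphism.} $\Phi_\theta$ carries $I_\varphi$ onto $I_{\varphi+\alpha(\theta)}$, by computing $f+\epsilon g\mapsto f+\epsilon(g+\theta(f))$.
\item \emph{Conclusion.} Isomorphism classes correspond to $\Hom(I,\cO_{X,0})/\operatorname{im}\alpha=\mathrm{coker}(\alpha)$.
\end{itemize}

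The main obstacle is the first bullet of (ii): showing that every isomorphism of deformations, and every way of embedding a given abstract deformation, is realized by such a $\Phi_\theta$. Two facts are needed. First, a morphism of germs over $\mathbb{D}$ lifts to the ambient space: the coordinate functions lift, and $\cO_{\bC^n,0}[\epsilon]$ is a free power series ring over $\bC[\epsilon]$. Second, a lift reducing to the identity is an automorphism of the stated form, by square-zero arithmetic. Once this is in place, well-definedness of the resulting vector space structure on $T^1_{(X,0)}$ is routine.
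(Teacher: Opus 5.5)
The paper does not prove this statement; it only quotes it from \cite{JP00} and \cite{GLS25}. Your argument is correct and is essentially the standard proof in those references. Flat lifts $I_\epsilon$ of $I$ correspond to $\Hom_{\cO_{\bC^n,0}}(I,\cO_{X,0})$, and one then divides out the first-order coordinate changes $z\mapsto z+\epsilon\theta$, which act by $\varphi\mapsto\varphi+\alpha(\theta)$. You are also right to read (i) as a statement about embedded deformations; read literally for abstract deformations, it would contradict (ii).

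Your generator-free flatness criterion $I_\epsilon\cap\epsilon\,\cO_{\bC^n,0}[\epsilon]=\epsilon I$ is a clean way to encode flatness. It makes well-definedness and linearity of $\varphi$ immediate, with no need to check compatibility with relations among generators $f_i+\epsilon g_i$.

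One step in your ``main obstacle'' should be stated explicitly. A lift of $\psi(\bar z_i)$ to $\cO_{\bC^n,0}[\epsilon]$ a priori has the form $z_i+a_i+\epsilon b_i$ with $a_i\in I$. Such a lift need not reduce to the identity modulo $\epsilon$, and need not even be invertible. You must correct it by subtracting an element $a_i+\epsilon c_i\in I_1$, which exists because $I_1\bmod\epsilon=I$. This gives a lift of the form $z_i+\epsilon\theta_i$, and only then does your square-zero computation yield an automorphism $\Phi_\theta$.
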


In the category of germs of complex spaces, one still can  define the \textit{fiber product} of two morphisms of germ of complex spaces (cf. \cite[Definition I.1.46 and Page 58]{GLS25}). In particular, for $\pi:(\cX,0)\rightarrow (S,0)$  a deformation of $(X,0)$ and $\phi: (T,0) \rightarrow (S,0)$ a morphism of germs of complex spaces, the fiber product $(\cX,0)\times_{(S,0)} (T,0)$ together with the natural second projection $$(\cX,0)\times_{(S,0)} (T,0) \rightarrow (T,0)$$ which is also flat, determines a deformation of $(X,0)$ over $(T,0)$.  This is often called the \textit{pull-back} or the \textit{induced deformation} by the morphism $\phi$ (cf. \cite[Definition II.2.3]{GLS25})
\begin{defn} \label{defsemuni} A deformation  $\pi:(\cX,0)\rightarrow (S,0)$ is said to be \textit{semi-universal} if for any other deformation  $\rho:(\cY,0)\rightarrow (T,0)$, there exists a morphism of germs of complex spaces $\phi: (T,0) \rightarrow (S,0)$ whose differential is unique  such that $\rho:(\cY,0)\rightarrow (T,0)$ is isomorphic to the pullback $(\cX,0)\times_{(S,0)} (T,0) \rightarrow (T,0)$ by $\phi$, i.e. the following diagram is Cartesian
\[
\begin{tikzcd}
(\mathcal{Y},0) \arrow[r, "\exists \Phi"] \arrow[d]
& (\mathcal X, 0) \arrow[d, "\pi"] \\[4pt]
(T,0)\arrow[r, "\exists \phi"]
& (S,0)
\arrow[ul, phantom, "\square", very near start, pos=0.52]
\end{tikzcd}
\] If the morphism $\phi$ is further unique then the deformation  $\pi:(\cX,0)\rightarrow (S,0)$ is said to be \textit{universal}. 
\end{defn}
By definition, a semi-universal deformation if exists is unique up to non-canonical isomorphisms. The following result is fundamental in the theory of deformations of germs of complex spaces.
\begin{thrm} $\mathrm{(}$\cite{Don72, Gra72}$\mathrm{)}$ Let $(X,0)$ be a germ of complex spaces. Suppose that $\dim_{\bC}T^1_{(X,0)} < + \infty$. Then there exists a semi-universal deformation of $(X,0)$.
\end{thrm}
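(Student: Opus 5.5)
The statement is the classical Grauert--Donin theorem, so the plan is to follow Grauert's route rather than the Banach-analytic one: build the semi-universal deformation order by order as a formal object, and then replace the formal object by a convergent one using Grauert's approximation theorem (Theorem \ref{grauapp}).

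\emph{Step 1: reduce to equations.} Embed $(X,0)\subset(\bC^n,0)$ with ideal $I=(f_1,\dots,f_k)$. By Lemma \ref{defemb}, every deformation over $(S,0)$ is then given by perturbed generators $F_i(x,s)$ with $F_i(x,0)=f_i$. Flatness is the condition that every relation $\sum_i r_{ij}f_i=0$ lifts to a relation $\sum_i R_{ij}F_i=0$. By Artin--Rees arguments it suffices to check this for a finite generating set of the syzygy module.

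\emph{Step 2: the first-order part.} Using Theorem \ref{def: t1}, choose elements $g_1,\dots,g_l\in\Hom(I,\cO_{X,0})$ whose classes form a basis of $T^1_{(X,0)}=\mathrm{coker}(\alpha)$. Put $F_i^{(1)}=f_i+\sum_j t_j\,g_j(f_i)$ on $(\bC^l,0)$.

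\emph{Step 3: formal construction.} Apply Schlessinger's criterion to the deformation functor of $(X,0)$ on Artinian $\bC$-algebras. Conditions (H1)--(H3) hold, and (H3) is exactly $\dim T^1<\infty$. This gives a formal semi-universal deformation over $\hat{\cO}_S=\bC[[t]]/J$, where $J$ is generated by formal obstruction equations. Concretely, the output is formal power series $F_i(x,t)$, $R_{ij}(x,t)$ and generators of $J$ satisfying a polynomial system in these unknowns, with $F_i\equiv F_i^{(1)}$ modulo $\mathfrak m_t^2$.

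\emph{Step 4: convergence.} Apply Grauert's approximation theorem to this system to obtain convergent $F_i$, $R_{ij}$ and a convergent ideal $J$. They solve the system and agree with the formal solution to first order in $t$. This step is the main obstacle, for two reasons.
\begin{itemize}
\item Artin approximation alone is not enough. It controls agreement modulo high powers of $\mathfrak m$, but it does not preserve the fact that the ideal $J$ is exactly the obstruction ideal. Grauert's version handles the mixed system, convergent in $x$ and $t$ and defined modulo an ideal in $t$ only, and keeps the base $\cO_S=\bC\{t\}/J$ with the correct tangent space.
\item One must then check that the resulting $\pi\colon(\cX,0)\to(S,0)$ is flat and semi-universal. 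Flatness follows from the lifted relations. For versality, take any deformation $(\cY,0)\to(T,0)$. The formal theory gives a compatible system of maps $T_m\to S$ on its infinitesimal neighbourhoods, and these must be realized by a convergent $\phi$. This is again an approximation problem, solved by Artin's theorem (Theorem \ref{artappr}) applied to the equations expressing that the pullback is isomorphic to $\cY$.
\end{itemize}

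\emph{Step 5: uniqueness of the differential.} The differential of $\phi$ is unique because $T_0S\cong T^1_{(X,0)}$. This holds by construction, since we chose a basis of $T^1_{(X,0)}$ and the ideal satisfies $J\subset\mathfrak m_t^2$.
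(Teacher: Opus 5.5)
Your outline follows Grauert's route: the Schlessinger hull, then Theorem~\ref{grauapp}. The paper only sketches this route in Section~2 and merely cites the statement. Its own construction in Section~\ref{sec: exi} follows Donin instead: there, versality comes from the splitting $P\times S\cong Z_{\widetilde K}$ of Banach analytic spaces, and no approximation is used. As written, your Step~4 has two genuine gaps.

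\textbf{First gap: the hypothesis of Grauert's theorem and formal versality.} Theorem~\ref{grauapp} does not turn a formal solution into a convergent one. Its hypothesis is that \emph{every} solution of order $e\ge e_0$ agreeing with the given one to order $e_0$ extends to order $e+1$ by homogeneous corrections. Its conclusion only gives agreement to order $e_0$. You never verify this hypothesis, and the Schlessinger hull does not supply it.

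Moreover, finite-order agreement with the hull together with $J\subset\mathfrak m_t^2$ does not give semi-universality. For $xy=0$, the family $xy=t$ over $\bC\{t\}/(t^2)$ has $J=(t^2)\subset\mathfrak m_t^2$, a bijective Kodaira--Spencer map, and agrees with the hull ($J=0$) to first order. Yet $xy=t$ over $(\bC,0)$ is not a pullback of it. For the same reason, the maps $T_m\to S$ in your versality bullet are not supplied by the formal theory. That theory maps into the formal hull, not into the completion of your new $S$.

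The missing idea is to build the obstruction theory into the equations:
\begin{itemize}
\item Write flatness as $\sum_i R_{ij}F_i=\sum_{\nu=1}^{p}\phi_\nu(t)\,H_{\nu j}(x,t)$, with $\phi$ as unknowns of type $\Phi$ and $F,R,H$ of type $\Psi$.
\item Start from a solution whose $H_\nu(x,0)$ represent a basis of $T^2_{(X,0)}$. This space is finite-dimensional, e.g., for isolated singularities.
\item Because the $H_\nu(x,0)$ span $T^2_{(X,0)}$, every obstruction in $T^2_{(X,0)}\otimes\mathfrak m_t^{e+1}/\mathfrak m_t^{e+2}$ can be absorbed by correcting $\phi$. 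This is exactly Grauert's hypothesis.
\item Because they are linearly independent, a direct obstruction computation shows the resulting convergent family is formally versal, however far it is from the formal hull.
\end{itemize}
With this setup, your Step~3 becomes unnecessary.

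\textbf{Second gap: versality via Artin's theorem.} Versality of the convergent family cannot be obtained from Theorem~\ref{artappr}. In the equations expressing $\cY\cong\phi^*\cX$, the base map $\phi(t)$ must depend on $t$ alone. The coordinate change $\Psi(x,t)$ and the unit matrix $U(x,t)$ depend on $(x,t)$. Artin's theorem treats all unknowns as functions of all variables, and its congruences modulo ideals $\mathfrak a_\nu$ cannot encode independence of $x$. It may therefore return a $\phi$ depending on $x$, which is not a morphism $(T,0)\to(S,0)$.

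Nested approximation fails for convergent power series in general (Gabrielov's example). This is precisely why Theorem~\ref{grauapp} separates unknowns $\Phi(s)$ from $\Psi(x,s)$. You invoked that separation for the generators of $J$, but then fell back on Artin's theorem for a problem with the same nested structure.

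The fix is to apply Theorem~\ref{grauapp} a second time, with the ideal of $(T,0)$ playing the role of the ideal modulo which the equations hold. Its extension hypothesis is then the formal versality from the first gap, in the strong form that both the base map and the isomorphism lift along small extensions.
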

\begin{rmk} Note that the condition that  $\dim_{\bC}T^1_{(X,0)} < + \infty$ is actually necessary and sufficient for the existence of semi-universal deformation (cf. \cite{Gre20}).  It is well-known that if $(X,0)$ is a germ of complex spaces with an isolated singularity then $\dim_{\bC}T^1_{(X,0)}$ is a finite-dimensional $\bC$-vector space (cf. \cite[Corollary 1.34]{Gre20} or \cite[Theorem 10.2.15]{JP00} for example).
\end{rmk}
\begin{rmk} One can also define the space $T^2_{(X,0)}$ of \textit{obstructions to deformation extensions}. In particular, if $T^2_{(X,0)}$ is trivial, then the base of the semi-universal deformation is smooth (cf. \cite[Definition 10.3.5]{JP00} or \cite[Proposition II.1.29]{GLS25}.)
\end{rmk}
\section{Equivariant deformations of germs of complex spaces} \label{sec: defgerequ}
This section is devoted to developing a theory of equivariant deformations. Firstly, one needs to specify what kind of group actions is the right one to investigate. Since we are working with  germs of complex spaces, the usual global definition of group actions shall not work especially when the group is not compact. In order to remedy this, we are going to use a local definition of group actions on germs of complex spaces, already introduced in \cite{Doa21}. Namely, let $(X,0)$ be a germ of complex spaces and $G$ be a complex Lie group. We define the so-called \textit{local} $(G,K)$-\textit{action}  $(X,0)$ where $K$ is a compact subgroup of $G$. Denote by $\prod_X$ the collection of all pair $(U_\pi,V_\pi)$, where $U_\pi$ and $V_\pi$ are open neighborhoods of $0$ in $X$ such that $U_\pi\Subset V_\pi$. Suppose that for each $\pi \in \prod_X$ we have an open neighborhood $G_\pi$ of $K$ and a mapping $\Phi_\pi:\; G_\pi \rightarrow \Ho(U_\pi,V_\pi)$ where $\Ho(U_\pi,V_\pi)$ is the set of all holomorphic functions from $U_\pi$ to $V_\pi$.
\begin{defn} \label{locholact} $\mathrm{(}$\cite[Definition 5.1]{Doa21}$\mathrm{)}$ One says that the system $\lbrace \Phi_\pi\rbrace$ defines a \textit{local} $(G,K)$-\textit{action} (or  $(G,K)$-\textit{action} for short) on $X$ if the following conditions are satisfied.
\begin{enumerate}
\item[(a)] For all $g,h \in G$ such that $k:=gh\in G_\pi$, we have
$$\Phi_{\pi}(g)\circ \Phi_\pi(h)\mid_{U_{\pi,h}}= \Phi_\pi(k)\mid_{U_{\pi,h}}$$ where $U_{\pi,h}:=\lbrace x \in U_\pi \mid \Phi_\pi(h)(x)\in U_\pi \rbrace $;
\item[(b)] $\Phi_\pi(\mathbf{1}_G)=\mathbf{id}$;
\item[(c)] for all $\pi, \rho \in \prod_X$ and $g\in G_\pi\cap G_\rho$ we have
$$\Phi_\pi(g)\mid_{U_{\pi}\cap U_{\rho}}=\Phi_\rho(g)\mid_{U_{\pi}\cap U_{\rho}} $$ so that $gx:=\Phi_\pi(g)x$ is independent of the choice of $\pi$ with $x\in U_\pi,g\in G_\pi$;
\item[(d)] for any two open sets $U\Subset U_\pi$ and $V\Subset V_\pi$, the set 
$$W:=W_{\overline{U},V}:=\lbrace g\in G_\pi \mid g\cdot \overline{U}\subset V \rbrace $$ is open in $G_\pi$ and the map \begin{align*}
*:W&\rightarrow \mathcal{O}(U)\\
g &\mapsto f\circ g\mid_U
\end{align*} is continuous for all $f\in \mathcal{O}(V)$ where $\overline{U}$ is the closure of $U$ and $\mathcal{O}(P)$ is the set of holomorphic functions on $P$ for any open subset $P$ containing $0$ of $X$;
\item[(e)] The restriction of the system $\lbrace \Phi_\pi\rbrace$ on $K$ gives a global $K$-action on $X$, i.e. a homomorphism of topological groups $\Phi:\; K\rightarrow \Au(X,0)$.
\end{enumerate}
Moreover, if $G$ is a real (resp. complex) Lie group and if  $*$ and $\Phi$ are real analytic (resp. holomorphic), then the local $(G,K)$-action is called real analytic (resp. holomorphic). Two local $(G,K)$-actions defined by two systems $\lbrace \Phi_\pi\rbrace$ and $\lbrace \Phi'_\pi\rbrace$  are said to be equivalent if for all $\pi\in \prod_X$, the mappings $\Phi_\pi:\; G_\pi \rightarrow \Ho(U_\pi,V_\pi)$ and $\Phi'_\pi:\; G'_\pi \rightarrow \Ho(U_\pi,V_\pi)$ coincide on a sub-domain $G_\pi \cap G'_\pi$ containing $K$ and their restrictions on $K$ give the same global $K$-action. 
\end{defn}
\begin{rmk} In the above definition,
\begin{enumerate}
\item[(i)] of course any local $(G,K)$-action is a \textit{local} $G$-\textit{action} and in particular if $K= \{ \mathrm{Id}_G\}$, then one recovers a \textit{local} $G$-\textit{action} on germs of complex spaces in the usual sense (cf. \cite[Section 1.2]{Akh95});
\item[(ii)] if $K= \{ \mathrm{Id}_G\}$ and $G$ is compact then one recovers the usual global $G$-action on $(X,0)$. This justifies why Condition (e) is pertinent. In this case, we shall omit ``global" and say $G$-action for simplicity.
\end{enumerate}
\end{rmk}
Now, we are in a position to give a definition of equivariant deformations of a germ of complex spaces $(X,0)$.
\begin{defn} \label{defdefequ}
 Let $G$ be a complex Lie group and $K$ be a compact subgroup of $G$ which determines a holomorphic $(G,K)$-action on $(X,0)$. A real analytic (resp. holomorphic) $(G,K)$-\textit{equivariant deformation} of $X$ is a usual deformation of $(X,0)$ $\pi$: $(X,0)\rightarrow (S,0)$ equipped with a real analytic (resp. holomorphic) $(G,K)$-action on $X$ extending the given (resp. holomorphic) $(G,K)$-action on $(X,0)$ and a real analytic (resp. holomorphic) $(G,K)$-action on $(S,0)$ with respect to which $\pi$ is a $G$-equivariant map. We call these extended actions a real analytic (resp. holomorphic) $(G,K)$-equivariant structure on $\pi$: $(X,0)\rightarrow (S,0)$.
\end{defn}
\begin{rmk}
For simplicity, by $(G,K)$-actions (resp. $(G,K)$-equivariant deformations), we really mean real analytic $(G,K)$-actions (resp. real analytic $(G,K)$-equivariant deformations).
\end{rmk}
\begin{defn} A deformation  $\pi:(\cX,0)\rightarrow (S,0)$ is said to be $(G,K)$-\textit{equivariantly semi-universal} if it is semi-universal in sense of Definition \ref{defsemuni} and   if for any other $(G,K)$-equivariant  deformation  $\rho:(\cY,0)\rightarrow (T,0)$, there exists a $(G,K)$-equivariant morphism of germs of complex spaces $\phi: (T,0) \rightarrow (S,0)$ whose differential is unique  such that $\rho:(\cY,0)\rightarrow (T,0)$ is $(G,K)$-equivariantly isomorphic to the pullback $(\cX,0)\times_{(S,0)} (T,0) \rightarrow (T,0)$ by $\phi$. In other world, the following Cartesian diagram
\[
\begin{tikzcd}
(\mathcal{Y},0) \arrow[r, "\exists \Phi"] \arrow[d]
& (\mathcal X, 0) \arrow[d, "\pi"] \\[4pt]
(T,0)\arrow[r, "\exists \phi"]
& (S,0)
\arrow[ul, phantom, "\square", very near start, pos=0.52]
\end{tikzcd}
\] can be made further $(G,K)$-equivariant.
\end{defn}
 Therefore, we can rephrase our objective as finding a real analytic (resp. holomorphic) $(G,K)$-equivariant semi-universal deformation of a given germs of complex spaces with a real analytic (resp. holomorphic) $(G,K)$-action.

\begin{rmk} The main source of group actions on germs of complex spaces should already come from their automorphism group. Symmetry of singularities was investigated by many authors in the eighties where one of the most important results is the existence of maximal reductive subgroups. The interested reader is referred to  \cite{Orl79, Wal80, Wah83, HM89} for great details. Non-reductive subgroups are also abundant (cf. \cite[Theorem 5]{Mul87}). 
\end{rmk}

According to \cite[Section 1]{Don72}, a deformation of $(X,0)$ can be thought of as a deformation of a fixed resolution of the structure sheaf $\cO_{X,0}$ 
\begin{equation}\label{resfix}\mathcal{F}^\ubul: 0 \longrightarrow \mathcal{F}^r \overset{\varphi_r}{\longrightarrow}\mathcal{F}^{r-1} \overset{\varphi_{r-1}}{\longrightarrow}\cdots \overset{\varphi_{1}}{\longrightarrow} \mathcal{F}^0  \overset{\pi}{\longrightarrow} \mathcal{O}_{X,0}= \mathcal{O}_{\mathbb{C}^n,0}/\mathcal{I}_X\longrightarrow 0
\end{equation} where $I_X$ is the ideal sheaf defining the germ $(X,0)$ and $\mathcal{F}_i \cong \mathcal{O}_{\mathbb{C}^n,0}^{\oplus m_i}$ for some $m_i \in \mathbb{N}$. More precisely, in view of Lemma \ref{defemb}, if $\mathcal{X}\rightarrow (S,0)$ be a deformation of $(X,0)$ given in the product $(\mathbb{C}^n,0)\times (S,0)$ then the resolution $\mathcal{F}^{\ubul}\rightarrow \mathcal{O}_X$ (\ref{resfix}) can be extended to a resolution $\widetilde{ \mathcal{F}}^{\ubul} \rightarrow \mathcal{O}_{\mathcal{X}}$ of the structure sheaf $\mathcal{O}_{\mathcal{X}}$ where each $\tilde{\mathcal F}^{i}$ is a free $\mathcal{O}_{(\mathbb{C}^n,0)\times (S,0)}$-module. Conversely, such an extension determines a deformation of $(X,0)$ given in the product $(\mathbb{C}^n,0)\times (S,0)$. Fortunately, this correspondence is still available in the equivariant context at least when the group $G$ is compact. To see this, we need the following $G$-equivariant version of Lemma \ref{defemb}.
\begin{lemma} \label{gdefemb} Let $(X,0)$ be given as subgerm of $(\bC^n,0)$ for some $n$ and let $G$ be a complex compact Lie group acting holomorphically on $(\mathbb{C}^n,0)$ leaving $(X,0)$ invariant. Then any $G$-equivariant deformation $\pi:(\cX,0)\rightarrow (S,0)$ of $(X,0)$ can be $G$-equivariantly embedded. In other words, if we equip the product $(\mathbb C^n,0)\times(S,0)$ with the diagonal $G$-action coming from the given $G$-actions on each factor, then the following diagram 

\begin{center}
\begin{tikzpicture}[every node/.style={anchor=center}, scale=0.85, transform shape]
\node (X0) at (0,4) {$(X,0)$};
\node (Xc) at (3,4) {$(\mathcal X,0)$};
\node (Cn) at (0,2) {$(\mathbb C^n,0)$};
\node (CxS) at (3,2) {$(\mathbb C^n,0)\times(S,0)$};
\node (Z0) at (0,0) {$\{0\}$};
\node (S0) at (3,0) {$(S,0)$};

\draw[->] (X0) -- (Xc); 
\draw[->] (Cn) -- (CxS); 
\draw[->] (Z0) -- (S0);

\draw[->] (X0) -- (Cn) node[midway,left] {$\iota_0$}; 
\draw[->] (Xc) -- (CxS) node[midway,right] {$\iota$}; 
\draw[->] (Cn) -- (Z0); 
\draw[->] (CxS) -- (S0) node[midway,right] {$\mathrm{pr}_2$}; 

\node at ($(X0)!0.5!(CxS)$) {$\square$};
\node at ($(Cn)!0.5!(S0)$) {$\square$};

\draw[->, bend left=70] (Xc.south east) to node[right,pos=0.5] {$\pi$} (S0.north east);
\end{tikzpicture}
\end{center}
is Cartesian where all the map appearing are further $G$-equivariant.
\end{lemma}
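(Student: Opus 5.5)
Start from the non-equivariant embedding of Lemma \ref{defemb}: there is a closed embedding $\iota:(\cX,0)\hookrightarrow(\bC^n,0)\times(S,0)$ over $(S,0)$ restricting to $\iota_0$ on the central fiber. It is determined by holomorphic functions $\tilde z_1,\ldots,\tilde z_n\in\cO_{\cX,0}$ lifting the coordinate functions $z_1,\ldots,z_n$ of $\cO_{X,0}=\cO_{\cX,0}\otimes_{\cO_{S,0}}\bC$. Indeed, by flatness and Nakayama, the map $\cO_{S,0}\{z\}\to\cO_{\cX,0}$ given by $z_i\mapsto\tilde z_i$ is surjective, and its kernel reduces modulo $\mathfrak m_S$ to $I_X$. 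The plan is to modify the lift $\tilde z=(\tilde z_1,\ldots,\tilde z_n)$ so that the induced map $\cX\to\bC^n$ becomes $G$-equivariant. The map $\mathrm{pr}_2\circ\iota=\pi$ is already equivariant, since $\pi$ is, so $\iota=(\tilde z,\pi)$ will then be equivariant for the diagonal action.

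Since $G$ is compact, Bochner linearization lets me assume the action on $(\bC^n,0)$ is linear, $g\cdot z=\rho(g)z$. Here I would either reduce to this case by a $G$-equivariant change of coordinates or note that the theorem's hypotheses allow it. Regard $\tilde z$ as a holomorphic map $F:(\cX,0)\to(\bC^n,0)$ and average it with normalized Haar measure:
\[
\bar F(x):=\int_G \rho(g)^{-1}F(g\cdot x)\,dg .
\]
This is $G$-equivariant by invariance of Haar measure. It is holomorphic because the action is holomorphic and continuous in $g$ (condition (d) of Definition \ref{locholact}), and an integral over a compact parameter space of holomorphic functions depending continuously on the parameter is holomorphic. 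A small point to check is that the average is well defined on a fixed representative of the germ. For this I choose a $G$-invariant neighbourhood, which exists by compactness.

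Next I check that $\bar F$ still restricts to $\iota_0$ on the central fiber. The central fiber $X=\pi^{-1}(0)$ is $G$-stable because $0\in S$ is fixed, and the inclusion $X\hookrightarrow\cX$ is equivariant. On $X$ we have $F|_X=\iota_0$, which is equivariant, so each integrand equals $\iota_0(x)$ and $\bar F|_X=\iota_0$. Thus $\bar z_i:=\bar F^*z_i$ are again lifts of the $z_i$. The Nakayama argument above then shows that $\bar\iota:=(\bar F,\pi)$ induces a surjection $\cO_{\bC^n\times S,0}\to\cO_{\cX,0}$, so it is a closed embedding. The square with $(X,0)\hookrightarrow(\bC^n,0)$ is Cartesian because $\bar\iota$ reduces to $\iota_0$ modulo $\mathfrak m_S$, and the bottom square is trivially Cartesian.

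I expect the main obstacle to be the analytic justification of the averaging on germs. One must make sure that $F(g\cdot x)$ is defined for all $g\in G$ and all $x$ in a common neighbourhood, and that holomorphy in $x$ survives the integration. In particular, when $\cX$ is singular, holomorphic functions must be taken as sections of $\cO_\cX$, not as functions on the reduced space. To handle this, I would embed $\cX$ locally via $\iota$ and view functions as restrictions of ambient ones, or equivalently average on the level of $\cO_{\cX}(U)$ as a Fr\'echet space, using continuity of $g\mapsto f\circ g$ from condition (d) to make sense of the Bochner integral.
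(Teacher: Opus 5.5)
Your proposal is correct and is essentially the paper's own proof. After Bochner linearization, the paper averages the non-equivariant lift $\tilde\rho$ of $\iota_0$ as $\int_G g\cdot\tilde\rho(g^{-1}\cdot x)\,d\mu(g)$, which is your $\bar F$ after the substitution $g\mapsto g^{-1}$. It notes that the average still lifts $\iota_0$ because $\iota_0$ is equivariant, and concludes that the averaged map is again an embedding by rerunning the argument behind Lemma \ref{defemb}. Your remarks on choosing a $G$-invariant representative and on averaging sections of $\mathcal{O}_{\mathcal{X}}$ (rather than functions on the reduced space) fill in details the paper calls routine.
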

\begin{proof}
Since $G$ is compact, one can assume the $G$-actions are linear after a change of holomorphic coordinates (see \cite{Kau65} or \cite[Section 2.2]{Akh95}). Regardless of the $G$-actions, there exists an embedding $\tilde \iota:=(\tilde \rho,\pi)$ fitting into the above Cartesian diagram, where $\tilde \rho: (\cX,0) \rightarrow (\mathbb C^n,0)$ is a map lifting the embedding $\iota_0: (X,0) \rightarrow (\mathbb C^n,0)$ (cf. \cite[Proposition 1.5 and Lemma 1.6]{GLS25}). Let $\mu$ be a Haar measure on the compact group $G$ such that $\int_Gd\mu(g)=1$. For $x\in (\mathcal{X},0)$, since the $G$-action on the product $(\mathbb C^n,0)\times(S,0)$ is diagonal and the morphism $\pi$ is already $G$-equivariant, one can  define   \begin{align*}
\iota (x) &= \int_G g\cdot \widetilde\iota(g^{-1}\cdot p) d\mu(g)  \\
          &= \left ( \int_G g\cdot \widetilde\rho(g^{-1}\cdot x) d\mu(g) ,\pi(x)\right )\\
          &=:(\rho,\pi)           
\end{align*} It is routine to check that $\iota$ is a well-defined holomorphic map. Furthermore, its $G$-equivariance follows from the left-invariance of the measure $\mu$. The commutativity of the diagram follows from the $G$-equivariance of the other maps  and the definition of $\iota$. In particular, $\mathrm{pr}_2 \circ \iota =\pi$. Finally, observe that $\rho: (\cX,0) \rightarrow (\mathbb C^n,0)$ is a still  map lifting the embedding $\iota: (X,0) \rightarrow (\mathbb C^n,0)$ due to the $G$-equivariance of $\iota_0$. Thus, by repeating the proof of \cite[Proposition 1.5]{GLS25}, $\iota$  remains an embedding. This ends the verification.
\end{proof}

\begin{proof}[Proof of Theorem \ref{Gres}] Fix a minimal free resolution  
$$\mathcal{F}^\ubul: 0 \longrightarrow \mathcal{F}^r \overset{\varphi_r}{\longrightarrow}\mathcal{F}^{r-1} \overset{\varphi_{r-1}}{\longrightarrow}\cdots \overset{\varphi_{1}}{\longrightarrow} \mathcal{F}^0  \overset{\pi}{\longrightarrow} \mathcal{O}_{X,0}= \mathcal{O}_{\mathbb{C}^n,0}/I_X\longrightarrow 0$$ which exists by \cite{Don72}. Here, minimality means $\mathrm{im} \varphi_i \subset \mathfrak{m}\mathcal{F}^{i-1}$ where $\mathfrak{m}$ is the maximal ideal of the local ring $\mathcal{O}_{\mathbb{C}^n,0}$. Moreover, we can take $\mathcal{F}^0$ to be $\mathcal{O}_{\mathbb{C}^n,0}$. We shall replace $\mathcal{F}^\ubul$ by a new resolution where all the differentials are $G$-equivariant. This new resolution will be constructed by induction. Observe that $\pi$ is already $G$-equivariant by assumption.

Consider $\mathcal{K}_0:=I_X$. The vector space $V_0:= \mathcal{K}_0/\mathfrak{m} \mathcal{K}_0$ is finite-dimensional over $\mathbb{C}$. The $G$-action on $\mathcal{O}_{\mathbb{C}^n,0}$ preserves $\mathcal{I}_X$ and $\mathfrak{m}$ hence induces a linear representation of $G$ on $V_0$. Let $\lbrace v_1,\ldots,v_{m_0} \rbrace$ be a $\mathbb{C}$-basis of $V_0$. Choose a $\mathbb{C}$-linear section $s_0 :V_0 \rightarrow K_0 \subset  \mathcal{O}_{\mathbb{C}^n,0}$ sending for example $v_j$ to an element $s_j \in I_X$ whose class in $V_0$ is exactly $v_j$. Define an averaged section $\sigma_0 : V_0 \rightarrow K_0$ by 
$$\sigma_0(v):=\int_G g\cdot (s_0(g^{-1}\cdot v))d\mu(g).$$
where $\mu$ is a Haar measure on $G$ such that $\int_G d\mu(g)=1.$ It is easy to see that $\sigma_0(v) \in \mathcal{O}_{\mathbb{C}^n,0}$ for each $v \in V_0$. Moreover, $\sigma_0$ is $G$-equivariant. Finally, $\sigma_0$ is still a section of the projection $\mathcal{K}_0 \rightarrow V_0$ since $s_0$ is so. In this way, the image $W_0:=\sigma_0(V_0)$ is a finite-dimensional $G$-stable subspace of $K_0$ isomorphic to $V_0$ and the composition $W_0\rightarrow \mathcal{K}_0 \rightarrow V_0$ is an isomorphism. Define $\mathcal{F}^1:=\mathcal{O}_{\mathbb{C}^n,0}^{\oplus m_0}$ with canonical basis $e_1,\ldots,e_{m_0}$. Under the identification $e_j \leftrightarrow v_j$, we get a $G$-action on $\lbrace e_1,\ldots,e_{m_0}\rbrace$ as a $\mathbb{C}$-vector space. This allows us to equip $\mathcal{F}^1$ with a natural action as follows. For $f=\sum f_j e_j \in \mathcal{F}^1$, we set 
$$g\cdot f=\sum(g\cdot f_j)(g\cdot e_j)$$ where the former $\cdot$ is the usual $G$-action on $\mathcal{O}_{\mathbb{C}^n,0}$. At last, we define the first differential $$\varphi_1: F_1 \rightarrow \mathcal{O}_{\mathbb{C}^n,0},\; e_j \mapsto \sigma_0(v_j)\in W_0 \subset I_X .$$ It is a routine computation to check that $\varphi$ is $G$-equivariant. Moreover, $\sigma_0(v_j)$'s generate $\mathcal{I}_X$ since their classes in $V_0$ form a basis for $V_0$ by construction. Therefore, $\mathrm{im}( \varphi_1) =I_X$ due to Nakayama's lemma. Hence the sequence
$$\mathcal{F}^1 \overset{\varphi_1}{\longrightarrow} \mathcal{F}^0=\mathcal{O}_{\mathbb{C}^n,0} \overset{\pi}{\longrightarrow} \mathcal{O}_X$$ is $G$-equivariantly exact. This finishes the base case.

Assume inductively that for some $i\geq 1$, we have constructed free $\mathcal{O}_{\mathbb{C}^n,0}$-modules $\mathcal{F}^0, \ldots, \mathcal{F}^i$ equipped with $G$-action and $G$-equivariant differentials $\varphi_1,\ldots,\varphi_i$ such that the sequence  $$\mathcal{F}^i \overset{\varphi_i}{\longrightarrow}\mathcal{F}^{i-1} \overset{\varphi_{i-1}}{\longrightarrow}\cdots \overset{\varphi_{1}}{\longrightarrow} \mathcal{F}^0  \overset{\pi}{\longrightarrow} \mathcal{O}_{\mathbb{C}^n,0}/I_X\longrightarrow 0$$ is exact. Note that the $\mathcal{O}_{\mathbb{C}^n,0}$-module $\mathcal{K}_i=\ker \varphi_{i-1}$, equipped the natural $G$-action, is finitely generated. Set $V_i:=\mathcal{K}_i/\mathfrak{m}\mathcal{K}_i$ and $\mathcal{F}^{i+1}:=\mathcal{O}_{\mathbb{C}^n,0}^{\oplus \dim V_i}$. Repeating the arguments in the previous step, we get an $G$-action on $\mathcal{F}^{i+1}$ and a $G$-equivariant differential $$\varphi_{i+1}: \mathcal{F}^{i+1}\rightarrow \mathcal{F}^i$$ whose image is exactly $\ker \varphi_i$. This ends the induction.

Finally, since $\mathcal{O}_{\mathbb{C}^n,0}$ is a regular local ring of dimension $n$, the projective dimension of $\mathcal{O}_{\mathbb{C}^n,0}/I_X$ is also finite and $\leq n$. Hence, the process in the induction argument must stop after a finite number of times. This gives the desired $G$-equivariant resolution.
\end{proof}
Now, let  $(X,0) \hookrightarrow (\mathbb{C}^n,0)$ be an inclusion of germs of complex spaces where $(X,0)$. Suppose that $G$ be a Lie compact group acting holomorphically and linearly on $(\mathbb{C}^n,0)$ leaving $(X,0)$ invariant. We fix a $G$-equivariant resolution of the structure sheaf $\mathcal{O}_{X,0}$
\begin{equation} \label{gresfix}
\mathcal{F}^\ubul: 0 \longrightarrow \mathcal{F}^r \overset{\varphi_r}{\longrightarrow}\mathcal{F}^{r-1} \overset{\varphi_{r-1}}{\longrightarrow}\cdots \overset{\varphi_{1}}{\longrightarrow} \mathcal{F}^0  \overset{\pi}{\longrightarrow} \mathcal{O}_{X,0}= \mathcal{O}_{\mathbb{C}^n,0}/\mathcal{I}_X\longrightarrow 
\end{equation} which exists by Proposition \ref{Gres}.
\begin{lemma}[Relative Nakayama lemma] \label{relnaklem} Let $A$ be a local ring with maximal ideal $\mathfrak{m}_A$, $B$ a local $A$-algebra flat over $A$ and $M$ a finitely generated $B$-module. Let $x_1,\ldots,x_r \in M$ such that their images $\bar{x_1},\ldots,\bar{x_r}$ in the special fiber $M\otimes_A(A/\mathfrak{m}_A)$  generate $M/\mathfrak{m}_A M$ as a module over $B\otimes_A(A/\mathfrak{m}_A)$. Then $x_1,\ldots,x_r$ generate $M$ as a $B$-module. 

\end{lemma}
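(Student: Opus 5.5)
The plan is to reduce to the ordinary Nakayama lemma over the local ring $B$ by passing to the cokernel of the submodule spanned by the $x_i$. Throughout, ``local $A$-algebra'' is read as saying that the structure map $A\to B$ is a local homomorphism, so that $\mathfrak{m}_A B\subset \mathfrak{m}_B$, where $\mathfrak{m}_B$ is the maximal ideal of $B$. This is the standard convention, and it holds in the situation of interest: $A=\cO_{S,0}$ and $B=\cO_{(\bC^n,0)\times(S,0),0}$, or a localisation of it. Flatness of $B$ over $A$ will not be used; it is harmless extra hypothesis that is present because of how the lemma will be applied to deformations of $\cF^\ubul$.

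Let $N\subset M$ be the $B$-submodule generated by $x_1,\ldots,x_r$, and set $Q:=M/N$. Then $Q$ is a finitely generated $B$-module, because it is a quotient of $M$. The first step is to apply $-\otimes_A(A/\mathfrak{m}_A)$ to the exact sequence $N\to M\to Q\to 0$. Right exactness of the tensor product gives an exact sequence
\[
N/\mathfrak{m}_A N \longrightarrow M/\mathfrak{m}_A M \longrightarrow Q/\mathfrak{m}_A Q \longrightarrow 0 .
\]
Here $M\otimes_A(A/\mathfrak{m}_A)=M/\mathfrak{m}_AM$ as a module over $B\otimes_A(A/\mathfrak{m}_A)=B/\mathfrak{m}_AB$, and similarly for $N$ and $Q$. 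By hypothesis, the image of the first map contains the classes $\bar x_1,\ldots,\bar x_r$, and these generate $M/\mathfrak{m}_AM$ over $B/\mathfrak{m}_AB$. Hence the first map is surjective, and therefore $Q/\mathfrak{m}_AQ=0$, that is, $Q=\mathfrak{m}_AQ$.

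The second step uses locality of $A\to B$: it gives $\mathfrak{m}_AQ=(\mathfrak{m}_AB)Q\subset\mathfrak{m}_BQ$, so $Q=\mathfrak{m}_BQ$. Since $Q$ is a finitely generated module over the local ring $B$, the usual Nakayama lemma yields $Q=0$. This means $N=M$, so the $x_i$ generate $M$ as a $B$-module.

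The only point needing care is the inclusion $\mathfrak{m}_AB\subset\mathfrak{m}_B$, which is where the locality of the $A$-algebra structure enters. Without it the statement fails: for example, $B$ could be a localisation in which $\mathfrak{m}_A$ becomes the unit ideal, and then the hypothesis on the special fibre is vacuous. I would therefore state this convention explicitly at the start of the proof.
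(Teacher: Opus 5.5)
Your proof is correct and is essentially the paper's argument: the paper writes $M=N+(\mathfrak{m}_AB)M$ and applies the classical Nakayama lemma using $\mathfrak{m}_AB\subset\mathfrak{m}_B$, which is the same as your passage to $Q=M/N$ with $Q=\mathfrak{m}_AQ\subset\mathfrak{m}_BQ$. You are also right that flatness plays no role, since the paper invokes it only for the identification $M\otimes_A(A/\mathfrak{m}_A)\cong M/\mathfrak{m}_AM$, which holds for every $A$-module.
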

\begin{proof} Note that $\mathfrak{m}_A M =(\mathfrak{m}_A B)M$. By the flatness condition, 
$$M\otimes_A(A/\mathfrak{m}_A)\cong M/\mathfrak{m}_A M=M/(\mathfrak{m}_A B)M.$$
Let $N$ be the $B$-submodule generated by $x_1,\ldots,x_r$. By assumption, we can write $$M=N+(\mathfrak{m}_A B)M.$$ Since $\mathfrak{m}_A B \subset \mathfrak{m}_B$, we can apply the classical Nakayama lemma to conclude that $M=N$.
\end{proof}

\begin{prop} Let $(S,0)$ be a germ of complex spaces on which $G$ acts holomorphically. We equip $(\mathbb{C}^n,0)\times (S,0)$ with the diagonal $G$-action (note the $G$-action on $(\mathbb{C}^n,0)$ is always fixed to define the $G$-action on $(X,0)$). If there is a $G$-equivariant complex of free sheaves $\widetilde{\cF}^\ubul$ over $$(\mathbb{C}^n,0)\times (S,0)$$ extending (\ref{gresfix}) then $\widetilde{\cF}^\ubul$ is a free $G$-equivariant resolution of the structural sheaf of some deformation $\mathcal{X}\rightarrow (S,0)$ of $(X,0)$.
\end{prop}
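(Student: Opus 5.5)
The plan is to split the statement into a non-equivariant part, which is Donin's result \cite[Proposition 1.3]{Don72}, and an equivariant part, which comes essentially for free once the non-equivariant part is established. Write $A=\mathcal{O}_{S,0}$ with maximal ideal $\mathfrak{m}_A$, and $B=\mathcal{O}_{(\mathbb{C}^n,0)\times(S,0)}=\mathcal{O}_{\mathbb{C}^n,0}\{A\}$. Then $B$ is a local $A$-algebra that is flat over $A$, and $B\otimes_A A/\mathfrak{m}_A=\mathcal{O}_{\mathbb{C}^n,0}$. The hypothesis that $\widetilde{\cF}^\ubul$ extends (\ref{gresfix}) means $\widetilde{\cF}^\ubul\otimes_A A/\mathfrak{m}_A\cong \cF^\ubul$. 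This identification includes the augmentation $\widetilde{\cF}^0=B\to B/\mathrm{im}(\widetilde\varphi_1)$, which reduces to $\pi$. We set $\mathcal{I}_{\mathcal X}:=\mathrm{im}(\widetilde\varphi_1)\subset B$ and $\mathcal{O}_{\mathcal X}:=B/\mathcal{I}_{\mathcal X}$. The first steps are:
\begin{enumerate}
\item[(1)] Show that $\widetilde{\cF}^\ubul\to\mathcal{O}_{\mathcal X}$ is exact.
\item[(2)] Show that $\mathcal{O}_{\mathcal X}$ is $A$-flat with special fiber $\mathcal{O}_{X,0}$.
\end{enumerate}

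Step (1) is the main obstacle. I would prove exactness by the standard local criterion, argued by induction on the length of Artinian quotients $A_k=A/\mathfrak{m}_A^{k+1}$ and then passed to completion. Alternatively, one can argue directly as follows. Each $\widetilde{\cF}^i$ is free over $B$, hence flat over $A$. The complex $\widetilde{\cF}^\ubul\otimes^{\mathbb{L}}_A A/\mathfrak{m}_A$ is therefore computed by $\widetilde{\cF}^\ubul\otimes_A A/\mathfrak{m}_A=\cF^\ubul$, which is acyclic in positive degrees. Consider $H_i(\widetilde{\cF}^\ubul)$ for the largest $i\geq1$ with nonzero homology. A base-change spectral sequence (equivalently, the long exact sequence argument applied to the truncated complex of flat modules) gives $H_i(\widetilde{\cF}^\ubul)\otimes_A A/\mathfrak{m}_A\hookrightarrow H_i(\cF^\ubul)=0$. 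Since $H_i$ is a finitely generated $B$-module and $\mathfrak{m}_A B\subset\mathfrak{m}_B$, Nakayama (the argument of Lemma \ref{relnaklem}) forces $H_i=0$. Descending induction on $i$ then kills all positive homology. The same argument shows $\mathrm{Tor}_1^A(\mathcal{O}_{\mathcal X},A/\mathfrak{m}_A)=H_1(\cF^\ubul)=0$, so $\mathcal{O}_{\mathcal X}$ is flat over $A$ by the local flatness criterion for finitely generated modules over the Noetherian local ring $B$. Right exactness of $\otimes$ gives $\mathcal{O}_{\mathcal X}\otimes_A A/\mathfrak{m}_A=\mathrm{coker}(\varphi_1)=\mathcal{O}_{X,0}$, which is step (2). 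Hence $\mathcal X\subset(\mathbb{C}^n,0)\times(S,0)\to(S,0)$ is a deformation of $(X,0)$, and $\widetilde{\cF}^\ubul$ is a free resolution of its structure sheaf.

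For the equivariant part I would argue as follows. Since each $\widetilde\varphi_i$ is $G$-equivariant for the diagonal action and $\widetilde{\cF}^0=B$ carries the action induced by the diagonal action on $(\mathbb{C}^n,0)\times(S,0)$, the ideal $\mathcal{I}_{\mathcal X}=\widetilde\varphi_1(\widetilde{\cF}^1)$ is $G$-stable. So $G$ acts on $\mathcal{O}_{\mathcal X}$, that is, on the subgerm $(\mathcal X,0)$. The projection $\mathrm{pr}_2$ is equivariant, hence so is $\pi=\mathrm{pr}_2|_{\mathcal X}$. On the central fiber this action restricts to the given one on $(X,0)$, because the reduction of $\widetilde{\cF}^\ubul$ is the equivariant resolution (\ref{gresfix}). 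The augmentation $\widetilde{\cF}^0\to\mathcal{O}_{\mathcal X}$ is the quotient by a $G$-stable ideal and is therefore equivariant, which makes $\widetilde{\cF}^\ubul$ a $G$-equivariant resolution. The only points needing care are the flatness/Tor bookkeeping in step (1) and checking that the identification of the special fiber is compatible with the $G$-actions. The latter holds because reduction mod $\mathfrak{m}_A$ is $G$-equivariant, since $\mathfrak{m}_A$ is $G$-stable.
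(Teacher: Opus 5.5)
Your overall plan is sound: prove exactness and $A$-flatness non-equivariantly, then note that $\mathcal{I}_{\mathcal X}=\mathrm{im}\,\widetilde\varphi_1$ is $G$-stable. The equivariant part, the special-fibre identification, and the deduction of flatness \emph{once exactness is known} are all fine.

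The gap is in Step (1), where you yourself located the difficulty. Write $k=A/\mathfrak{m}_A$. Your descending induction from the largest $i\ge 1$ with $H_i(\widetilde{\cF}^\ubul)\neq 0$ rests on the claim that $H_i(\widetilde{\cF}^\ubul)\otimes_A k\to H_i(\cF^\ubul)$ is injective. Neither the spectral sequence nor the truncation argument gives this. In $E^2_{p,q}=\mathrm{Tor}^A_p(H_q(\widetilde{\cF}^\ubul),k)\Rightarrow H_{p+q}(\cF^\ubul)$, the edge term $E^2_{0,i}$ receives differentials from $E^r_{r,i-r+1}$, $r\ge 2$. These are $\mathrm{Tor}^A_r$ of the \emph{lower} homology, including $\mathrm{Tor}^A_{i+1}(\mathcal{O}_{\mathcal X},k)$. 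Vanishing of homology above degree $i$ says nothing about them. Equivalently, the kernel of the edge map is the image of $\mathrm{Tor}^A_1(\mathrm{im}\,\widetilde\varphi_i,k)$.

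The claim fails in general. Take $A=B=\mathbb{C}\{s,t\}$ and the complex $C\colon 0\to A^2\overset{(s,t)}{\longrightarrow}A\to 0$. Its top homology is $H_1(C)=A\cdot(t,-s)\cong A$, so $H_1(C)\otimes_A k\cong\mathbb{C}$. But the generator maps to $0$ in $H_1(C\otimes_A k)=\mathbb{C}^2$. This does not contradict the proposition, since here $C\otimes_A k$ is not acyclic. It does show that the injectivity must use acyclicity in degrees \emph{below} $i$, which your argument never invokes.

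The repair is to run the induction upward, starting from flatness of $\mathcal{O}_{\mathcal X}$. This is what the paper's one-line proof (exactness via the relative Nakayama lemma) amounts to when spelled out.
\begin{enumerate}
\item[(a)] The term $E^2_{1,0}=\mathrm{Tor}^A_1(\mathcal{O}_{\mathcal X},k)$ supports no nonzero differentials, so it is a quotient of $H_1(\cF^\ubul)=0$. Directly: an element of $\ker(\mathcal{I}_{\mathcal X}\otimes_A k\to B\otimes_A k)$ lifts to $\ker\varphi_1=\mathrm{im}\,\varphi_2$, and $\widetilde\varphi_1\widetilde\varphi_2=0$.
\item[(b)] Hence $\mathcal{O}_{\mathcal X}$ is $A$-flat by the local criterion. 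So $\mathcal{I}_{\mathcal X}$ is flat with $\mathcal{I}_{\mathcal X}\otimes_A k=I_X$.
\item[(c)] Then $\ker\widetilde\varphi_1$ is flat with $\ker\widetilde\varphi_1\otimes_A k=\ker\varphi_1=\mathrm{im}\,\varphi_2$. So $\widetilde\varphi_2$ maps onto $\ker\widetilde\varphi_1$ by Nakayama (Lemma \ref{relnaklem}).
\item[(d)] Continue degree by degree.
\end{enumerate}
In this order your Step (2) is not a consequence of Step (1) but its starting point.
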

\begin{proof} The only thing that needs to be proved is the exactness of $\widetilde{\cF}^\ubul$ which easily follows from the relative Nakayama lemma.
\end{proof}
Luckily, the converse is also true.
\begin{prop} \label{deftores} Let $\mathcal{X}\rightarrow (S,0)$ be a $G$-equivariant deformation of $(X,0)$ given in the product $(\mathbb{C}^n,0)\times (S,0)$ in view of Lemma \ref{gdefemb}. Then the  $G$-equivariant resolution $\mathcal{F}^{\ubul}\rightarrow \mathcal{O}_X$ (\ref{gresfix}) can be extended to a $G$-equivariant resolution $\widetilde{ \mathcal{F}}^{\ubul} \rightarrow \mathcal{O}_{\mathcal{X}}$ of the structure sheaf $\mathcal{O}_{\mathcal{X}}$ where each $\tilde{\mathcal F}^{i}$ is a free $\mathcal{O}_{(\mathbb{C}^n,0)\times (S,0)}$-module. In this case, we call $\widetilde{ \mathcal{F}}^{\ubul}$ a $G$-equivariant deformation of the $G$-equivariant free resolution $\cF^\ubul$ of the structre sheaf $\mathcal{O}_{X,0}$.
\end{prop}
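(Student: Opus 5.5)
We build the extension $\widetilde{\cF}^\ubul$ inductively, following the proof of Theorem \ref{Gres} but now over $(\mathbb{C}^n,0)\times(S,0)$. Write $A:=\cO_{S,0}$, $B:=\cO_{(\mathbb{C}^n,0)\times(S,0)}$ and $\mathcal{I}_{\mathcal{X}}\subset B$ for the ideal of $\mathcal{X}$. Both carry the $G$-actions induced by the diagonal action; by Lemma \ref{gdefemb} the embedding is $G$-equivariant, so $\mathcal{I}_{\mathcal{X}}$ is $G$-stable. Reduction modulo $\mathfrak{m}_A$, i.e. restriction to the central fiber, is a $G$-equivariant surjection $B\to \cO_{\mathbb{C}^n,0}$. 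It maps $\mathcal{I}_{\mathcal{X}}$ onto $I_X$, since the diagram in Lemma \ref{gdefemb} is Cartesian.

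\emph{Base step.} Set $\widetilde{\cF}^0:=B$. Let $e_1,\dots,e_{m_0}$ be the $G$-equivariant basis of $\cF^1$ constructed in the proof of Theorem \ref{Gres}, with $\varphi_1(e_j)=\sigma_0(v_j)\in I_X$. Put $\widetilde{\cF}^1:=B^{\oplus m_0}=B\otimes_{\cO_{\mathbb{C}^n,0}}\cF^1$, with $G$ acting diagonally on coefficients and on the $e_j$. We need a $G$-equivariant lift $\widetilde\varphi_1:\widetilde{\cF}^1\to \mathcal{I}_{\mathcal{X}}$ of $\varphi_1$. Choose arbitrary lifts $\tilde s_j\in\mathcal{I}_{\mathcal{X}}$ of $\sigma_0(v_j)$, which gives a $\mathbb{C}$-linear map $s:\mathrm{span}\{e_j\}\to \mathcal{I}_{\mathcal{X}}$. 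Average it:
$$\tilde\sigma(e):=\int_G g\cdot s(g^{-1}\cdot e)\,d\mu(g).$$
This still lands in $\mathcal{I}_{\mathcal{X}}$, because the ideal is $G$-stable and closed, so integrals of convergent families remain in it. It still lifts $\varphi_1$, because reduction mod $\mathfrak{m}_A$ is $G$-equivariant and $\varphi_1$ is already equivariant. Extending $B$-linearly gives an equivariant $\widetilde\varphi_1$. Its image surjects onto $\mathcal{I}_{\mathcal{X}}/\mathfrak{m}_A\mathcal{I}_{\mathcal{X}}$; here we use flatness, which gives $\mathcal{I}_{\mathcal{X}}\otimes_A A/\mathfrak{m}_A\cong I_X$ via $\mathrm{Tor}_1^A(\cO_{\mathcal{X}},A/\mathfrak{m}_A)=0$. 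Lemma \ref{relnaklem} then yields $\mathrm{im}\,\widetilde\varphi_1=\mathcal{I}_{\mathcal{X}}$.

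\emph{Inductive step.} Suppose an equivariant exact complex $\widetilde{\cF}^i\to\cdots\to\widetilde{\cF}^0\to\cO_{\mathcal{X}}\to0$ has been built, reducing to $\cF^\ubul$ mod $\mathfrak{m}_A$. Let $\widetilde{\mathcal{K}}_i:=\ker\widetilde\varphi_i$, which is $G$-stable. The key point is that $\widetilde{\mathcal{K}}_i\otimes_A A/\mathfrak{m}_A\to \mathcal{K}_i=\ker\varphi_i$ is an isomorphism. To see this, note that $\cO_{\mathcal{X}}$ and all the $\widetilde{\cF}^j$ are $A$-flat, and splitting the exact sequence into short exact sequences shows inductively that every kernel is $A$-flat and that kernels commute with $\otimes_A A/\mathfrak{m}_A$. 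We then set $\widetilde{\cF}^{i+1}:=B\otimes\cF^{i+1}$, lift the generators $\varphi_{i+1}(e_j)\in\mathcal{K}_i$ to $\widetilde{\mathcal{K}}_i$, average over $G$ exactly as above, and conclude surjectivity onto $\widetilde{\mathcal{K}}_i$ by Lemma \ref{relnaklem}.

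\emph{Termination.} At the last step, $\widetilde{\mathcal{K}}_r$ is $A$-flat and satisfies $\widetilde{\mathcal{K}}_r\otimes A/\mathfrak{m}_A=\ker\varphi_r=0$. Since $\widetilde{\mathcal{K}}_r$ is a finitely generated $B$-module, Nakayama gives $\widetilde{\mathcal{K}}_r=0$, so $\widetilde\varphi_r$ is injective and the resolution has the same length $r$.

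\emph{Main obstacle.} The delicate point is the averaging. We must check three things: that $g\mapsto g\cdot s(g^{-1}e)$ is a continuous family in a fixed Banach or Fréchet space of holomorphic functions on a $G$-invariant neighbourhood, so that the integral is holomorphic and commutes with reduction mod $\mathfrak{m}_A$; that submodules such as $\mathcal{I}_{\mathcal{X}}$ and $\widetilde{\mathcal{K}}_i$ are closed under these integrals; and that the lifts are genuinely $B$-linear. I would handle this by choosing $G$-invariant Stein representatives, which is possible because $G$ is compact and acts linearly on $\mathbb{C}^n$ after Bochner linearization. Closedness then follows from the closedness of submodules of $\cO^m$ (Cartan/Krull) and from the fact that averaging of the finitely many vectors $s(g^{-1}e)$ is a finite-dimensional computation, since $g^{-1}e$ varies in the finite-dimensional span of the $e_j$.
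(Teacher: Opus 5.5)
Your proposal is correct and follows the paper's route: lift the minimal generators stage by stage, average the lifts over $G$ with Haar measure, and invoke the relative Nakayama lemma to get surjectivity onto each kernel. Your explicit check that the kernels $\widetilde{\mathcal K}_i$ are $A$-flat and specialize to $\ker\varphi_i$ supplies what the paper leaves implicit in its ``repeat Step 1'' and termination steps, and your averaging of the linear map $s$ on $\mathrm{span}\{e_j\}$ matches the construction of $\sigma_0$ in the proof of Theorem~\ref{Gres}.
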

\begin{proof} The proof will be a relative version of that of Proposition \ref{Gres}. Suppose that the germ $(X,0)\subset (\mathbb{C}^n,0)$ is defined by an $G$-stable ideal $I$ and the germ $\mathcal{X}\subset (\mathbb{C}^n \times S,0)$ is defined by a $G$-stable ideal $\mathcal{I}$. For simplicity, we introduce new notations: $A:=\mathcal{O}_{S,0}$, $B:=\mathcal{O}_{\mathbb{C}^n\times S,0}$ and $B_0:=\mathcal{O}_{\mathbb{C}^n,0}$, which are all local analytic algebras. In this way, $B_0 \cong B\otimes_A A/\mathfrak{m}_A$, $I\cong \mathcal{I}\otimes_A A/\mathfrak{m}_A$, $\mathcal{O}_{\mathcal{X}}=B/\mathcal{I}$ and $\mathcal{O}_{X,0}=B_0/I$, where $\mathfrak{m}_A$ is the unique maximal ideal of $A$. We aim to build free $B$-modules $\widetilde{\mathcal{F}}^i$ and maps $\tilde{\varphi}_i$ lifting $\mathcal{F}^i$ and $\varphi_i$ in (\ref{gresfix}), respectively. As before, the construction is done via induction.

\textbf{Step 1:  Base case.}
Let $K_0:=I \subset B_0$ and $V_0:=K_0/\mathfrak{m}_{B_0} K_0$ be the vector space of minimal generators of $I$ on the central fiber with a basis $v_1,\ldots,v_{m_0}$. In the proof of Proposition \ref{Gres}, we found holomorphic representatives $s_j\in I \subset B_0$ and averaged them with respect to a Haar measure on $G$ to get a $G$-equivariant finite-dimensionam subspace $W_0 \subset I$ lifting $V_0$. Now, we proceed in the same way but relatively.
Since $\mathcal{I}$ is flat over $A$, we have an exact sequence 
$$0 \rightarrow \mathfrak{m}_A\mathcal{I}\rightarrow \mathcal{I}\rightarrow I\rightarrow .0$$ Choose any representative $\widetilde{s}_j \in \mathcal{I}$ mapping to $s_j$ under the specialization $B\rightarrow B_0$. Now, form the averaged lifts $$\widetilde{\sigma}_j(z,s):=\int_G g\cdot (\widetilde{s}_j(g^{-1}\cdot (z,s)))d\mu(g)$$ where $g$ acts diagonally on $(z,s)\in \mathbb{C}^n\times S$ with the linear action on $\mathbb{C}^n$ and the given action on $S$. We can check that $\widetilde{\sigma}_j \in \mathcal{I}$ and that they are $G$-equivariant, specializing to $s_j$ on the central fiber. 

Let $\widetilde{W}_0 \subset \mathcal{I}$ be the $B$-submodule generated by $\tilde{\sigma}_1, \ldots,\widetilde{\sigma}_{m_0}$. Define $\widetilde{\mathcal{F}}_1:=B^{\oplus m_0}$ with the natural action induced from the diagonal $G$-action and the representation of $G$ on $V_0$ as in the proof of Proposition \ref{Gres}. With respect to the standard basis $\lbrace e_1,\ldots,e_{m_0}\rbrace$ of $\widetilde{\mathcal{F}}^1$, consider the map $$\widetilde{\varphi}_1: \widetilde{\mathcal{F}}^1 \rightarrow B, \; e_j \mapsto \tilde{\sigma}_j.$$ By construction, $\widetilde{\varphi}_1$ is $G$-equivariant, reducing to $\varphi_1$ on the central fiber. Finally, by Lemma \ref{relnaklem} applied to $M:=\mathcal{I}$, we have that $\mathrm{Im} \widetilde{\varphi}_1=\mathcal{I}$ after possibly shrinking the base $S$. Therefore, we get an sequence $$\widetilde{\mathcal{F}}^1 \overset{\varphi_1}{\longrightarrow} \widetilde{\mathcal{F}}^0=B \overset{\pi}{\longrightarrow} \mathcal{O}_{\mathcal{X}}$$ is $G$-equivariantly exact, extending the corresponding part of the resolution (\ref{gresfix}). This finishes the base case.

\textbf{Step 2: Induction step.}
Suppose that we constructed free $B$-modules $\widetilde{\mathcal{F}}^0,\ldots,\widetilde{\mathcal{F}}^i$ and $G$-equivariant maps $\widetilde{\varphi}_1,\ldots,\widetilde{\varphi}_i$ such that 
\[
\begin{tikzcd}[column sep=large, row sep=huge]
\widetilde \cF^i \ar[r,"\widetilde\varphi_i"] \ar[d,"\rho_i"] &
\widetilde \cF^{i-1} \ar[r,"\widetilde\varphi_{i-1}"] \ar[d,"\rho_{i-1}"] &
\cdots \ar[r] &
\widetilde \cF^1 \ar[r,"\widetilde\varphi_1"] \ar[d,"\rho_1"] &
\widetilde \cF^0 \ar[r,"\widetilde\pi"] \ar[d,"\rho_0"] &
\mathcal{O}_{\mathcal X,0} \ar[r] \ar[d,"\mathrm{id}"] & 0 \\
\cF^i \ar[r,"\varphi_i"] &
\cF^{i-1} \ar[r,"\varphi_{i-1}"] &
\cdots \ar[r] &
\cF^1 \ar[r,"\varphi_1"] &
\cF^0 \ar[r,"\pi"'] &
\mathcal{O}_{X,0} \ar[r] & 0
\end{tikzcd}
\]
where the rows are exact. Let $K_i:=\ker \varphi_i$ and $V_i:=K_i/\mathfrak{m}_{B_0}K_i$. Note that $K_i$ is $G$-stable and this $G$-action descends to a $G$-action on $V_i$. Set $\widetilde{\mathcal{F}}^{i+1}:=B^{\oplus m_{i+1}}$ where $m_{i+1}=\dim V_{i}$. Repeating the arguments in Step 1, we can construct a $G$-equivariant map $\widetilde{\varphi}_{i+1}: \widetilde{F}^{i+1} \rightarrow \widetilde{F}^{i}$ sitting in the commutative diagram
\[
\begin{tikzcd}[column sep=large, row sep=huge]
\widetilde{\mathcal{F}}^{i+1}\ar[r,"\widetilde\varphi_{i+1}"] \ar[d,"\rho_{i+1}"']&\widetilde \cF^i \ar[r,"\widetilde\varphi_i"] \ar[d,"\rho_i"'] &
\widetilde \cF^{i-1} \ar[r,"\widetilde\varphi_{i-1}"] \ar[d,"\rho_{i-1}"] &
\cdots \ar[r] &
\widetilde \cF^1 \ar[r,"\widetilde\varphi_1"] \ar[d,"\rho_1"] &
\widetilde \cF^0 \ar[r,"\widetilde\pi"] \ar[d,"\rho_0"] &
\mathcal{O}_{\mathcal X,0} \ar[r] \ar[d,"\mathrm{id}"] & 0 \\
\mathcal{F}^{i+1}\ar[r,"\varphi_{i+1}"]&\cF^i \ar[r,"\varphi_i"] &
\cF^{i-1} \ar[r,"\varphi_{i-1}"] &
\cdots \ar[r] &
\cF^1 \ar[r,"\varphi_1"] &
\cF^0 \ar[r,"\pi"'] &
\mathcal{O}_{X,0} \ar[r] & 0.
\end{tikzcd}
\] This completes the induction step.

\textbf{Step 3. Termination.} The process must terminate after a finite number of steps. This follows from the finiteness of the resolution (\ref{gresfix}) and the relative Nakayama lemma - Lemma \ref{relnaklem}.
\end{proof}
When the $G$-equivariant resolution (\ref{gresfix}) of the central fiber is fixed, the constructed $G$-equivariant resolution in Proposition \ref{deftores} might not be unique. Nevertheless, if one has two such $G$-equivariant resolutions $\widetilde{\cF}^\ubul_1 $ and  $\widetilde{\cF}^\ubul_2 $, then one can construct a $G$-equivariant map $\sigma$ from one to another
\[
\begin{tikzcd}[column sep=large, row sep=huge]
\widetilde{\mathcal{F}}^\ubul_1: \ar[d,"\sigma"']&\widetilde 0 \ar[r]  &
\widetilde \cF^{r}_1 \ar[r,"\widetilde\varphi_{r}^1"] \ar[d,"\sigma_{r}"'] &
\cdots \ar[r] &
\widetilde \cF^1_1 \ar[r,"\widetilde\varphi_1^1"] \ar[d,"\sigma_1"] &
\widetilde \cF^0_1 \ar[r,"\widetilde\pi^1"] \ar[d,"\sigma_0"] &
\mathcal{O}_{\mathcal X,0} \ar[r] \ar[d,"\mathrm{id}"'] & 0 \\
\widetilde{\mathcal{F}}^\ubul_2:&0 \ar[r] &
\widetilde{\cF}^{r}_2 \ar[r,"\widetilde{\varphi}_{r}^2"] &
\cdots \ar[r] &
\widetilde{\cF}^1_2 \ar[r,"\widetilde{\varphi}_1^2"] &
\widetilde{\cF}^0_2 \ar[r,"\widetilde{\pi}^1"] &
\mathcal{O}_{\mathcal{X},0} \ar[r] & 0.
\end{tikzcd}
\] reducing to the identity map on the central fiber. In particular, the matrices $\sigma_i$ are invertible in the germ $(\mathbb{C}^n,0)\times (S,0)$.  

\begin{proof}[Proof of Theorem \ref{gcordef}]
This is just a summary of what we have done so far.
\end{proof}
The following simple result will be useful in the sequel.
\begin{lemma}\label{uniextact} Let $G$ be a complex reductive Lie group and $K$ be its maximal compact subgroup whose complexification is exactly $G$. Let $(X,0) \hookrightarrow (\mathbb{C}^n,0)$ be an inclusion of germs of complex spaces. Suppose that $K$ acts holomorphically on $(\mathbb{C}^n,0)$ leaving $(X,0)$ invariant. There exists a unique (local) holomorphic $(G,K)$-action $(\mathbb{C}^n,0)$ leaving $(X,0)$ invariant and extending the given $K$-action.
\end{lemma}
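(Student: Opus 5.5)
Since $K$ is compact, Bochner's linearization theorem (\cite{Kau65}, \cite[Section 2.2]{Akh95}), already used in the proof of Lemma \ref{gdefemb}, gives holomorphic coordinates on $(\mathbb{C}^n,0)$ in which the $K$-action is linear. The action is then given by a continuous, hence real analytic, representation $\rho: K \to \mathrm{GL}_n(\mathbb{C})$. Because $G$ is the complexification of $K$, its universal property extends $\rho$ uniquely to a holomorphic representation $\rho^{\mathbb{C}}: G \to \mathrm{GL}_n(\mathbb{C})$. This gives a global holomorphic $G$-action on $\mathbb{C}^n$. Pulling it back through the linearizing biholomorphism $\psi$, which is defined only near $0$, yields a local action: for $\pi=(U_\pi,V_\pi)\in\prod_{\mathbb{C}^n}$, let $G_\pi$ be the open set of $g$ with $\psi^{-1}\rho^{\mathbb{C}}(g)\psi(\overline{U_\pi})\subset V_\pi$. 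This set contains $K$, since $K$ preserves small $K$-invariant neighbourhoods, and by compactness of $K$ it contains a neighbourhood of $K$. Conditions (a)--(e) of Definition \ref{locholact} then follow from the group law of $\rho^{\mathbb{C}}$ and holomorphic dependence on $g$.

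To show that $(X,0)$ is invariant, take generators $f_1,\dots,f_k$ of $I_X$. By averaging as in the proof of Theorem \ref{Gres}, choose them spanning a finite-dimensional $K$-stable subspace $W_0\subset I_X$. For $f\in W_0$, the map $g\mapsto f\circ g^{-1}$ is defined for $g$ near $K$ and is holomorphic in $g$. The cleanest route is to note that each $f\circ g^{-1}$ with $g\in K$ lies in $W_0$. Then $g\mapsto f\circ g^{-1}$ is a real analytic map $K\to W_0$, and it extends holomorphically to $K^{\mathbb{C}}=G$ near $K$. By the identity theorem, this extension agrees with $f\circ g^{-1}$ computed from the complexified action, because both are holomorphic in $g$ and agree on the totally real submanifold $K$ of maximal dimension. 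Hence $f\circ g^{-1}\in W_0\subset I_X$ for $g$ near $K$, so the local $G$-action preserves $I_X$ and thus $(X,0)$.

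For uniqueness, suppose two local holomorphic $(G,K)$-actions both extend the $K$-action. For fixed $x$ near $0$, the maps $g\mapsto gx$ are holomorphic on a connected neighbourhood of $K$ in $G$, shrinking to connected components meeting $K$, and they agree on $K$. Since $K$ is a maximal totally real submanifold of $G$ (its Lie algebra $\mathfrak{k}$ satisfies $\mathfrak{g}=\mathfrak{k}\oplus i\mathfrak{k}$), the identity theorem forces them to agree near $K$, which is exactly equivalence in the sense of Definition \ref{locholact}. I expect the main obstacle to be this step together with the one in the previous paragraph. The difficulty is that the domains $G_\pi$ are only neighbourhoods of $K$ rather than subgroups, so one must check that they can be taken connected on each component containing points of $K$. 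One must also check that the compatibility conditions (a) and (c) are inherited through restriction. I would handle this by working with tubes $K\exp(i\epsilon B)$, where $B$ is a small ball in $\mathfrak{k}$; by the Cartan decomposition these tubes are connected neighbourhoods of $K$ in each component.
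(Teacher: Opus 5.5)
Your proof is correct. The paper gives no argument of its own here, only a pointer to \cite[Corollary 5.1]{Doa21}, and your write-up (Bochner linearization, extending the linear representation via the universal property of $G=K^{\mathbb{C}}$, and the identity principle along the maximal totally real submanifold $K\subset G$ on tubes $K\exp(i\epsilon B)$, used for both the invariance of $I_X$ and uniqueness) is a sound, self-contained way of supplying that complexification argument; it even shows that in the linearizing coordinates the extension is a global linear $G$-action on $\mathbb{C}^n$, so the only genuinely local ingredient is the coordinate change $\psi$.
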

\begin{proof}  The proof is similar to \cite[Corollary 5.1]{Doa21}. So, we shall omit the details.
\end{proof}

Finally, in \cite{Cat78}, the author studies deformations of pairs $(X,G)$ where $X$ is a complex compact space and $G$ is a complex Lie group acting holomorphically on $X$, i.e. a deformation of $(X,G)$ means a usual deformation  $\pi: \mathcal{X} \rightarrow (S,0) $ of $X$ such that there is an $G$-action on $\mathcal{X}$ extending the given $G$-action on $X$ and the $G$-action on $(S,0)$ is required to be trivial. In particular, the $G$-action on $\mathcal{X}$ preserves the fibers. With this setting, the main result of \cite{Cat78} is the existence of a semi-universal deformation for $(X,G)$. Inspired by this, we define an analogous type of deformations on the side of germs of complex spaces.

\begin{defn} \label{defdefpair}
 Let $G$ be a complex Lie group and $K$ be a compact subgroup of $G$ which determines a holomorphic $(G,K)$-action on a gems of complex spaces $(X,0)$. A real analytic (resp. holomorphic)  deformation of $((X,0),(G,K))$ is a usual deformation of $(X,0)$ $\pi$: $(\mathcal{X},0)\rightarrow (S,0)$ equipped with a real analytic (resp. holomorphic) $(G,K)$-action on $X$ extending the given (resp. holomorphic) $(G,K)$-action on $(X,0)$ and the real analytic (resp. holomorphic) $(G,K)$-action on $(S,0)$ is required to be trivial.
\end{defn}
\begin{defn} A deformation  $\pi:(\cX,0)\rightarrow (S,0)$ of the pair $((X,0),(G,K))$ is said to be \textit{semi-universal} if for any other  deformation  $\rho:(\cY,0)\rightarrow (T,0)$ of $((X,0),(G,K))$, there exists a  morphism of germs of complex spaces $\phi: (T,0) \rightarrow (S,0)$ whose differential is unique  such that $\rho:(\cY,0)\rightarrow (T,0)$ is $(G,K)$-equivariantly isomorphic to the pullback $(\cX,0)\times_{(S,0)} (T,0) \rightarrow (T,0)$ along the morphism $\phi$.
\end{defn}
\begin{rmk} \label{defequvsdefpai} Of course, any deformation in the sense of Definition \ref{defdefpair} is a deformation in the sense of Definition \ref{defdefequ}. Therefore, what we have done so far is still valid for deformations of $((X,0),(G,K))$. Conversely, given a deformation $\pi: (\mathcal{X},0) \rightarrow (S,0)$ in the sense of Definition \ref{defdefequ}, if one takes the germ $(S^{(G,K)},0)$ fixed by the $(G,K)$-action, pulling back the deformation $\pi:(\mathcal{X},0) \rightarrow (S,0)$ along the natural inclusion $(S^{(G,K)},0) \rightarrow (S,0)$ gives a deformation in the sense of Definition \ref{defdefpair}. 
\end{rmk}
\begin{rmk} In a similar fashion, for $G$ a compact group, one can define the space $T^1_{((X,0),G)}$ of \textit{first order deformations} and the space $T^2_{((X,0),G)}$ of \textit{obstructions to deformation extensions} for the pair $((X,0),G)$. They turn out be canonically isomorphic to $(T^1_{(X,0)})^G$ and $(T^2_{(X,0)})^G$, respectively. This point shall be also clear from our construction in the sequel.
\end{rmk}
\begin{defn} \label{defdefpairrig}
 Let $G$ be a complex Lie group and $K$ be a compact subgroup of $G$ which determines a holomorphic $(G,K)$-action on a germ of complex spaces $(X,0)$. The pair  $((X,0),(G,K))$ is said to be \textit{rigid} if  every deformation of $((X,0),(G,K))$ over a germ of complex spaces $(B,0)$ is trivial, i.e. isomorphic to $(X,0)\times (B,0) \rightarrow (B,0)$. 
\end{defn}
\begin{rmk} Obviously, the rigidity of the germ $(X,0)$ implies that of the pair $((X,0),(G,K))$ but the converse is not true. Several examples are given at the end of  Section \ref{sec: exi}.
\end{rmk}
\section{Equivariant first order deformations of germs of complex spaces} \label{sec: doupoi} In this section, we recall Donin's study of deformations over double points (cf. \cite[Section 2]{Don72}), into which we shall integrate the group actions along the way. 

Assume that the $G$-equivariant free resolution (\ref{gresfix}) of the structure sheaf $\mathcal{O}_{X,0}$ is defined over some small open neighborhood $\Omega$ of $0$ in $\mathbb{C}^n$, i.e. the holomorphic matrices $\varphi_1,\ldots,\varphi_r$ are defined over $\Omega$. From now on, all balls or polydiscs under consideration are assumed to lie stricly in $\Omega$. 

Let $K$ be a small ball centered at $0$ in $\mathbb{C}^n$. Denote by $\mathcal{F}^1(\tilde K)$ the space of $n$-tuples $(q_1,\cdots,q_r)$ where each $q_i$ is a matrix with coefficients holomorphic in $\mathring{K}$ and continuous on $\bar{\tilde K}$ such that $q_i: \mathcal{F}^i\rightarrow \mathcal{F}^{i-1}$ is a morphism of sheaves. Then $\mathcal{F}^1(\tilde K)$ is a Banach  vector space with the obvious norm. In the same way, denote by $\mathcal{F}^2(\tilde K)$ the Banach vector space of $n$-tuples $(p_2,\cdots,p_r)$ where each $p_i$ is a matrix with coefficients holomorphic in $\mathring{\tilde K}$ and continuous on $\bar{\tilde K}$ such that $q_i: \mathcal{F}^i\rightarrow \mathcal{F}^{i-2}$ is a morphism of sheaves. Consider the map
$$\delta_{\tilde K}^1:\mathcal{F}^1(\tilde K) \rightarrow \mathcal{F}^2(\tilde K), \;(q_1,\cdots,q_r) \mapsto (q_{2}\circ q_1,\ldots, q_{i-1}q_i,\ldots, q_{r_1}\circ q_r). $$ As shown in \cite[Section 2.1]{Don72}, $\delta_K^1$ is an analytic map so that $Z_K:=(\delta_K^1)^{-1}(0)$ is an  analytic Banach space. Finally, we denote by $\mathcal{F}^0(K)$ the analytic Banach manifold of $(r+n)$-tuples $(\gamma_0,\ldots,\gamma_r,z_1+\Phi_1,\ldots, z_n+\Phi_n)$ where $\gamma_i$ is an invertible matrix function holomorphic on the interior of $K$ and continuous on its boundary giving rise to an isomorphism of sheaves $\gamma_i: \mathcal{F}^i \rightarrow \mathcal{F}^i$ for each $i$ and $\Phi_j$ is a holomorphic function on $K$, continuous on its boundary for each $j$ such that $\Phi:=(z_1+\Phi_1,\ldots,z_n+\Phi_n): K \rightarrow K$ is a small holomorphic coordinate change. In $\mathcal{F}^0(K)$, we have a distinguished point $(\mathrm{id},z):=(\mathrm{id},\ldots,\mathrm{id},z_1,\ldots,z_n)$ where $z_1,\ldots,z_n$ are the coordinate functions on $\mathbb{C}^n$. Similarly we have $\varphi:=(\varphi_1,\ldots,\varphi_r)$ (coming from the fixed resolution (\ref{gresfix})) as a distinguished point of $\mathcal{F}^1(K)$ and $0:=(0,\ldots,0)$ as a distinguished point of $\mathcal{F}^{2}(K)$. 

From now on, we always work over sufficiently small neighborhoods of the distinguished points. Moreover, if $K' \subset K$ is another small neighborhood of $0 \in \mathbb{C}^n$, then we have a natural restriction map $\mathrm{res}: \mathcal{F}^i(K)\rightarrow \mathcal{F}^i(K')$. Define a map $\omega: \mathcal{F}^0(K)\times \mathcal{F}^1(K)\rightarrow \mathcal{F}^1(K')$ given by
$$(\gamma_0,\ldots,\gamma_r,z_1+\Phi_1,\ldots,z_n+\Phi_n) \times (q_1,\ldots,q_k)\mapsto (q_1',\ldots,q_k')$$ where for $z\in K$ $q_i'(z)=\gamma_{i-1}(z)q_i(\Phi^{-1}(z))\gamma_i^{-1}(z)$ if we denote $\Phi:=(z_1+\Phi_1,\ldots,z_n+\Phi_n)$.  In this way, $\omega$ can be regarded as an action of $\mathcal{F}^0(K)$ on $\mathcal{F}^1(K)$, leaving $Z_K$ invariant. In particular, $\omega$ induces a map $\delta_K^0: \mathcal{F}^0(K) \rightarrow \mathcal{F}^1(K)$ given by the action of $\mathcal{F}^0(K)$ on the distinguished point $\varphi:=(\varphi_1,\ldots,\varphi_r)$ of $\mathcal{F}^1(K)$. \cite[Section 2.2]{Don72} indicates that given $(S,0)$ a germ of complex spaces, a deformation of $(X,0)$ over $(S,0)$ is equivalent to a choice of a sufficiently small neighborhood $K$ together with an analytic morphism $(S,0) \rightarrow (Z_K,\varphi)$.

Now, we take the group actions into account. Since $G$ is compact the small ball $K$ can be chosen to be further $G$-stable. Note that on $\mathcal{F}^i$ $(i=1,\ldots,r)$, we already have a $G$-action. For a matrix of holomorphic function on $K$ and continuous on its boundary $P:\mathcal{F}^j \rightarrow \mathcal{F}^{j-1}$, we define the action of $g \in G$ on $P$ by the formula
\be \label{gactonf}(g\cdot P)(z):=g\circ P(g^{-1}\cdot z)\circ g^{-1}.\ee  This in turn induces natural \textit{diagonal} well-defined $G$-actions on $\mathcal{F}^i(K)$ ($i=0,1,2$). 
\begin{lemma} \label{lem:fixandequi} With the above settings, \begin{itemize}
 \item[1.] the distinguished points in $\mathcal{F}^0(K)$, $\mathcal{F}^{1}(K)$ and $\mathcal{F}^2(K)$ are fixed by the corresponding $G$-actions;

\item[2.] the map $\delta^i: \mathcal{F}^i(K) \rightarrow \mathcal{F}^{i+1}(K)$ is $G$-equivariant ($i=0,1$). 
\end{itemize}
\end{lemma}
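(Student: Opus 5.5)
The plan is to verify both items directly from formula (\ref{gactonf}) and the definitions of $\delta^0$ and $\delta^1$. Two facts from the setup will be used throughout. First, the $G$-action on $\mathbb{C}^n$ is linear, and $K$ is chosen $G$-stable, so $g^{-1}\cdot z\in K$ whenever $z\in K$. Second, the fixed resolution (\ref{gresfix}) is $G$-equivariant, so $g\circ\varphi_i(g^{-1}\cdot z)\circ g^{-1}=\varphi_i(z)$ for all $g\in G$ and $z\in K$.

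\textbf{Item 1.} We must specify the action on $\mathcal{F}^0(K)$. On the matrix components $\gamma_i$ it is given by (\ref{gactonf}), and on the coordinate change $\Phi$ it is conjugation, $g\cdot\Phi:=g\circ\Phi\circ g^{-1}$. Equivalently, $\Phi_j$ transforms so that $z+g\cdot\Phi=g\circ(z+\Phi)\circ g^{-1}$, which is well defined because the action is linear.

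With these conventions, each distinguished point is fixed:
\begin{itemize}
\item $(\mathrm{id},z)$ is fixed, since $g\circ\mathrm{id}\circ g^{-1}=\mathrm{id}$ and $g\circ\mathrm{id}_K\circ g^{-1}=\mathrm{id}_K$;
\item $\varphi=(\varphi_1,\ldots,\varphi_r)$ is fixed, which is exactly the equivariance identity above;
\item $0\in\mathcal{F}^2(K)$ is trivially fixed.
\end{itemize}

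\textbf{Item 2, the map $\delta^1$.} Here the action is multiplicative on compositions. Indeed,
\[
(g\cdot q_{i-1})(z)\,(g\cdot q_i)(z)=g\, q_{i-1}(g^{-1}z)\,g^{-1}g\, q_i(g^{-1}z)\, g^{-1}=\bigl(g\cdot(q_{i-1}q_i)\bigr)(z),
\]
so $\delta^1(g\cdot q)=g\cdot\delta^1(q)$ componentwise.

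\textbf{Item 2, the map $\delta^0$.} We show more generally that $\omega$ is equivariant for the diagonal action, i.e.
\[
\omega\bigl(g\cdot(\gamma,\Phi),\,g\cdot q\bigr)=g\cdot\omega\bigl((\gamma,\Phi),q\bigr).
\]
Write $\Phi^g:=g\Phi g^{-1}$, so that $(\Phi^g)^{-1}=g\Phi^{-1}g^{-1}$. Then the $i$-th component of the left side at $z$ is
\[
g\gamma_{i-1}(g^{-1}z)g^{-1}\cdot g\,q_i\bigl(g^{-1}(\Phi^g)^{-1}(z)\bigr)g^{-1}\cdot g\gamma_i(g^{-1}z)^{-1}g^{-1}.
\]
Since $g^{-1}(\Phi^g)^{-1}(z)=\Phi^{-1}(g^{-1}z)$, this equals
\[
g\bigl[\gamma_{i-1}(w)\,q_i(\Phi^{-1}(w))\,\gamma_i(w)^{-1}\bigr]g^{-1},\qquad w=g^{-1}z,
\]
which is precisely the $i$-th component of $g\cdot\omega((\gamma,\Phi),q)$.

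Equivariance of $\delta^0$ then follows from item 1. Since $\delta^0(\gamma,\Phi)=\omega((\gamma,\Phi),\varphi)$ and $\varphi$ is $G$-fixed,
\[
\delta^0(g\cdot(\gamma,\Phi))=\omega(g\cdot(\gamma,\Phi),g\cdot\varphi)=g\cdot\delta^0(\gamma,\Phi).
\]

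\textbf{Main obstacle.} The only delicate point is the bookkeeping in $\omega$: fixing the correct convention for how $G$ acts on the coordinate-change component, and checking that the inverse $\Phi^{-1}$ intertwines correctly under conjugation. Everything else is direct substitution into (\ref{gactonf}).

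One further issue concerns the domains. The restriction map $K\to K'$ should be compatible with the action, so $K'$ is taken $G$-stable as well, which is possible because $G$ is compact. Also, $\omega$ is only defined near the distinguished points, so we shrink to $G$-stable neighborhoods of them, for example by averaging or by using $G$-invariant norms, which exist by compactness.
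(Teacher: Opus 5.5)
Your proposal is correct and follows essentially the same route as the paper: item 1 reduces to the $G$-equivariance of the fixed resolution, and item 2 is the same direct substitution into (\ref{gactonf}), including the identity $(g\Phi g^{-1})^{-1}=g\Phi^{-1}g^{-1}$. The only cosmetic difference is that you prove equivariance of $\omega$ in both arguments and then specialize to the fixed point $q=\varphi$, whereas the paper substitutes $\varphi$ directly and uses $\varphi_i(gw)=g\,\varphi_i(w)\,g^{-1}$; your remarks on taking $K'$ and the neighborhoods $G$-stable handle domain issues that the paper leaves implicit.
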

\begin{proof} For the first statement, the only thing which is not so clear is the distinguished point $\varphi:=(\varphi_1,\ldots,\varphi_r) \in \mathcal{F}^1(K)$  being fixed by the $G$-action. However, this follows directly from the fact that the free resolution (\ref{gresfix}) is $G$-equivariant.

Now, it remains to show the second statement. For $g\in G$ we compute:
\begin{align*}
(\delta^1(g\cdot q))_i(z)
  &= (g\cdot q)_{i-1}(z)\;\cdot\; (g\cdot q)_i(z) \\
  &= \big( g\, q_{i-1}(g^{-1}z)\, g^{-1} \big)
     \big( g\, q_i(g^{-1}z)\, g^{-1} \big) \\
  &= g\, \big( q_{i-1}(g^{-1}z)\, q_i(g^{-1}z) \big)\, g^{-1} \\
  &= \big( g\cdot (\delta^1(q)) \big)_i(z).
\end{align*}
Thus 
\[
\delta^1(g\cdot q) = g\cdot \delta^1(q)
\] giving the $G$-equivariance of $\delta^1$.

Next, let $g\in G$ and $\eta:=(\gamma,\Phi):=(\gamma_0,\ldots, \gamma_r,z_1+\Phi_1,\ldots,z_n+\Phi_n)\in \mathcal{F}^0(K)$. Denote $g\cdot\eta = (\widetilde\gamma,\widetilde\Phi)$. 
Then
\[
\widetilde\gamma_i(z) = g\, \gamma_i(g^{-1}z)\, g^{-1},
\qquad
\widetilde\Phi = g\circ \Phi\circ g^{-1}.
\] We compute:
\begin{align*}
(\delta^0(g\cdot\eta))_i(z)
  &= \widetilde\gamma_{i-1}(z)\; (p_i\circ\widetilde\Phi^{-1})(z)\; 
     \widetilde\gamma_i(z)^{-1} \\
  &= \big( g\,\gamma_{i-1}(g^{-1}z)\, g^{-1} \big)\;
     p_i(\widetilde\Phi^{-1}(z))\;
     \big( g\,\gamma_i(g^{-1}z)^{-1}\, g^{-1} \big).
\end{align*}
Since $\widetilde\Phi^{-1} = g\circ \Phi^{-1}\circ g^{-1}$, we have
\[
p_i(\widetilde\Phi^{-1}(z))
  \;=\; p_i\!\big( g(\Phi^{-1}(g^{-1}z)) \big).
\]
Because the central resolution is $G$--equivariant, we have
\[
p_i(g w) \;=\; g\, p_i(w)\, g^{-1},
\qquad \forall\, w,
\]
hence
\[
p_i\!\big( g(\Phi^{-1}(g^{-1}z)) \big)
  \;=\; g\, p_i(\Phi^{-1}(g^{-1}z))\, g^{-1}.
\] Substituting:
\begin{align*}
(\delta^0(g\cdot\eta))_i(z)
  &= g\,\gamma_{i-1}(g^{-1}z)\, g^{-1}\;
     \big( g\, p_i(\Phi^{-1}(g^{-1}z))\, g^{-1} \big)\;
     g\, \gamma_i(g^{-1}z)^{-1}\, g^{-1} \\
  &= g\;\big(
        \gamma_{i-1}(g^{-1}z)\,
        p_i(\Phi^{-1}(g^{-1}z))\,
        \gamma_i(g^{-1}z)^{-1}
      \big)\; g^{-1} \\
  &= \big( g\cdot (\delta^0(\eta)) \big)_i(z).
\end{align*}
Thus
\[
\delta^0(g\cdot\eta) = g\cdot \delta^0(\eta)
\] showing the $G$-equivariance of $\delta^1$.
\end{proof}
In this way, the Banach analytic space $Z_K=(\delta_K^1)^{-1}(0)$ is also $G$-stable. By Proposition \ref{deftores} and Theorem \ref{gcordef}, we have the following $G$-equivariant of Donin's result.

\begin{theorem} \label{defvartodefres} Given $(S,0)$ a germ of complex spaces on which a compact complex Lie group $G$ acts holomorphically, a $G$-equivariant deformation of $(X,0)$ over $(S,0)$ is equivalent to a choice of a sufficiently small neighborhood $K$ together with an $G$-equivariant analytic morphism $(S,0) \rightarrow (Z_K,\varphi)$.

\end{theorem}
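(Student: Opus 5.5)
We prove Theorem \ref{defvartodefres} by combining the non-equivariant correspondence of \cite[Section 2.2]{Don72} with the $G$-equivariant statements already established. The argument splits into two directions.

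\textbf{From deformations to morphisms.} Let $\pi:(\cX,0)\to(S,0)$ be a $G$-equivariant deformation of $(X,0)$.
\begin{enumerate}
\item By Lemma \ref{gdefemb}, we embed $(\cX,0)$ $G$-equivariantly into $(\bC^n,0)\times(S,0)$ with the diagonal action. We may assume the action on $\bC^n$ is linear.
\item By Proposition \ref{deftores}, we extend the fixed resolution (\ref{gresfix}) to a $G$-equivariant free resolution $\widetilde{\cF}^\ubul$ of $\cO_{\cX}$.
\item Its differentials $\widetilde\varphi_i(z,s)$ are holomorphic matrices on $\Omega\times S$. After shrinking, they are defined on $\bar K\times S$, where $K$ is a $G$-stable ball; such a ball exists since $G$ is compact and acts linearly, hence unitarily after averaging a Hermitian metric.
\item We define $\Psi:(S,0)\to\cF^1(K)$ by $s\mapsto(\widetilde\varphi_1(\cdot,s),\ldots,\widetilde\varphi_r(\cdot,s))$. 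It is analytic into the Banach space, as in \cite{Don72}. It lands in $Z_K$ because $\widetilde\varphi_{i-1}\circ\widetilde\varphi_i=0$, and it sends $0$ to $\varphi$.
\item For equivariance, the $G$-equivariance of $\widetilde\varphi_i$ with respect to the diagonal action reads $\widetilde\varphi_i(gz,gs)=g\,\widetilde\varphi_i(z,s)\,g^{-1}$. Hence $\Psi(gs)(z)=g\,\widetilde\varphi_i(g^{-1}z,s)\,g^{-1}=(g\cdot\Psi(s))(z)$ by (\ref{gactonf}), so $\Psi$ is $G$-equivariant.
\end{enumerate}

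\textbf{From morphisms to deformations.} Conversely, let $\Psi:(S,0)\to(Z_K,\varphi)$ be a $G$-equivariant analytic morphism.
\begin{enumerate}
\item Evaluating $\Psi$ gives a complex $\widetilde\cF^\ubul$ of free sheaves over $(\bC^n,0)\times(S,0)$ extending (\ref{gresfix}).
\item Reading the equivariance computation above backwards, the differentials of $\widetilde\cF^\ubul$ are $G$-equivariant for the diagonal action.
\item By the proposition preceding Proposition \ref{deftores}, which rests on the relative Nakayama lemma (Lemma \ref{relnaklem}), $\widetilde\cF^\ubul$ is exact. It therefore resolves $\cO_{\cX}$, where $\cX$ is cut out by $\mathrm{im}\,\widetilde\varphi_1$.
\item This ideal is $G$-stable, so $\cX$ inherits the diagonal action. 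Flatness over $S$ and the identification of the central fiber come from Donin's non-equivariant argument, and the projection $\cX\to S$ is then equivariant.
\end{enumerate}

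\textbf{Compatibility and the main obstacle.} We must check that the two constructions are mutually inverse up to the natural equivalences. On the deformation side this is equivariant isomorphism. On the $Z_K$ side it is the equivariant change of resolution $\sigma$ described after Proposition \ref{deftores}, which corresponds to the action of $\cF^0(K)$ via $\omega$.

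I expect this bookkeeping to be the main obstacle. In Donin's setting, ``equivalent'' identifies morphisms only modulo the $\cF^0(K)$-action and the choice of $K$ and resolution. In the equivariant setting we must ensure that the comparison maps can be chosen $G$-equivariant. My approach is to average the comparison maps over $G$ with respect to the Haar measure, exactly as in the proof of Theorem \ref{Gres}. Averaging preserves the property of restricting to the identity on the central fiber, and invertibility near $0$ follows since the average is still close to the identity. Lemma \ref{lem:fixandequi} guarantees that $\omega$, $\delta^0$ and $\delta^1$ intertwine the actions, so averaged comparisons stay within the relevant orbits.
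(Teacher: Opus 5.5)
Your two directions follow the paper's route. The paper's proof only invokes Proposition \ref{deftores} and Theorem \ref{gcordef} on top of Donin's non-equivariant correspondence \cite[Section 2.2]{Don72}, with Lemma \ref{lem:fixandequi} making $Z_K$ a $G$-space. Your computation with (\ref{gactonf}) is exactly the translation the paper leaves implicit: equivariance of the $\widetilde\varphi_i$ for the diagonal action is the same as equivariance of $(S,0)\to(Z_K,\varphi)$. The paper never does the compatibility bookkeeping, so that part of your proposal goes beyond it.

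In that extra part, the inference ``$\omega$ is $G$-equivariant by Lemma \ref{lem:fixandequi}, so averaged comparisons stay within the relevant orbits'' is not valid as stated. Equivariance only says that each translate $g\cdot\eta$ of an $\eta\in\cF^0(K)$ relating two $G$-equivariant data again relates them. The Haar average of the translates does so only if the relation is affine in $\eta$, and $q_i'=\gamma_{i-1}\,(q_i\circ\Phi^{-1})\,\gamma_i^{-1}$ is not affine in the coordinate change $\Phi$.

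Here is a counterexample. Take $G=\mathbb{Z}/2$ acting by $z\mapsto -z$ on $\bC$, with resolution $0\to\cO\xrightarrow{z^2}\cO$. The element with $\Phi^{-1}(z)=z+\epsilon z^2$, $\gamma_0=1$, $\gamma_1=(1+\epsilon z)^2$ fixes $q_1=z^2$. Its average replaces $\Phi$ by its odd part and $\gamma_1$ by $1+\epsilon^2z^2$, and so sends $z^2$ to $z^2-5\epsilon^2z^4+\cdots$.

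The repair is to average in two stages, each with a linear constraint.
\begin{enumerate}
\item First average the ambient automorphism $(z,s)\mapsto(F(z,s),s)$ of $(\bC^n,0)\times(S,0)$ that extends the given equivariant isomorphism $\cX\cong\cX'$. Requiring it to restrict to the given map on $\cX$ is affine in $F$, and the average $\int_G g\,F(g^{-1}\cdot)\,d\mu$ makes sense because the action on $\bC^n$ is linear. Subtracting elements of the ideal of $\cX$, you may take $F(z,0)=z$; the average preserves this, so invertibility is kept.
\item Then, with this equivariant coordinate change fixed, average the chain maps $\sigma$, normalised to be the identity on the central fiber. Here the condition $\sigma_{i-1}\widetilde\varphi^1_i=\widetilde\varphi^2_i\sigma_i$ is linear, so the average is still a chain map.
\end{enumerate}
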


\begin{cor} \label{defvartodefrespai} Given $(S,0)$ a germ of complex spaces, a  deformation of $((X,0),G)$ over $(S,0)$  is equivalent to a choice of a sufficiently small neighborhood $K$ together with an analytic morphism $(S,0) \rightarrow (Z_K^G,\varphi)$ where $Z_K^G$ is the set of fixed points of $Z_K$ by the $G$-action.
\end{cor}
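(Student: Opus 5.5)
We deduce Corollary \ref{defvartodefrespai} from Theorem \ref{defvartodefres} by specialising to a germ $(S,0)$ with \emph{trivial} $G$-action, which is exactly the setting of Definition \ref{defdefpair}. The argument has three steps.

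\textbf{Step 1: reduction to Theorem \ref{defvartodefres}.} By Definition \ref{defdefpair} and Remark \ref{defequvsdefpai}, a deformation of $((X,0),G)$ over $(S,0)$ is the same thing as a $G$-equivariant deformation of $(X,0)$ over $(S,0)$ in the sense of Definition \ref{defdefequ}, with $(S,0)$ carrying the trivial action. The trivial action is holomorphic, so Theorem \ref{defvartodefres} applies. It says that such deformations correspond to a choice of a small $G$-stable ball $K$ together with a $G$-equivariant analytic morphism $f:(S,0)\to (Z_K,\varphi)$.

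\textbf{Step 2: equivariant maps from a trivial $G$-space land in the fixed locus.} When $G$ acts trivially on $S$, equivariance reads $g\cdot f(s)=f(g\cdot s)=f(s)$ for all $g\in G$ and $s$. Hence $f$ takes values in $Z_K^G$. Conversely, any morphism into $Z_K^G$ is tautologically equivariant.

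The point needing care is that this identification must hold \emph{as morphisms of analytic spaces}, not merely set-theoretically, since $S$ may be non-reduced. I would do this as follows.
\begin{itemize}
\item Give $Z_K^G$ its natural Banach analytic structure as the intersection of $Z_K$ with the closed linear subspace $\mathcal{F}^1(K)^G$. This subspace is the image of the continuous averaging projection $P(q)=\int_G g\cdot q\,d\mu(g)$, so it is a complemented closed subspace.
\item Equivalently, $Z_K^G=(\delta^1_K|_{\mathcal{F}^1(K)^G})^{-1}(0)$. By Lemma \ref{lem:fixandequi}, $\delta^1_K$ maps $\mathcal{F}^1(K)^G$ into $\mathcal{F}^2(K)^G$.
\item Given an equivariant $f:(S,0)\to Z_K\subset \mathcal{F}^1(K)$, compare $f$ with $P\circ f$. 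Equivariance together with the trivial action on $S$ gives $P\circ f=\int_G g\cdot f\,d\mu=f$ as morphisms. Hence $f=P\circ f$ factors through $\mathcal{F}^1(K)^G$, and therefore through $Z_K^G$, scheme-theoretically.
\end{itemize}

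\textbf{Step 3: compatibility with isomorphisms.} It remains to check that the notion of equivalence is respected. Under Theorem \ref{defvartodefres}, equivariant isomorphisms of deformations correspond to the action of (the germ of) $\mathcal{F}^0(K)^G$ via $\omega$, and this action preserves $Z_K^G$ because $\omega$ is $G$-equivariant. Shrinking $K$ to a smaller $G$-stable ball is compatible with the restriction maps, which are equivariant. Combining Steps 1 to 3 gives the stated correspondence.

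I expect the only real subtlety to be the non-reduced factorization in Step 2. The averaging projection $P$ is what resolves it.
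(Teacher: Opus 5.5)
Your proposal is correct and takes the same route as the paper. The paper deduces the corollary from Theorem \ref{defvartodefres} in one line, citing Definition \ref{defdefpair} and Remark \ref{defequvsdefpai}: a deformation of the pair is an equivariant deformation over a base with trivial $G$-action, so the classifying map lands in $Z_K^G$. Your averaging-projection argument for factoring through $Z_K^G$ over a possibly non-reduced base, and your check that isomorphisms are respected, fill in details the paper leaves implicit.
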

\begin{proof}
This follows directly from Definition \ref{defdefpair} and Remark \ref{defequvsdefpai}. 
\end{proof}
\section{Existence of equivariant semi-universal deformations} \label{sec: exi}With the same settings and notations as in the two preceding sections, in this section, we shall construct a $G$-equivariant semi-universal deformation of the germ of complex spaces $(X,0)$ by integrating the $G$-action into Donin's original construction (cf. \cite[Section 3]{Don72} for full details and \cite[Section 3]{Pue82} for a glimpse). As before, since $G$ is compact one can assume the $G$-action on $(\mathbb{C}^n,0) \supset (X,0) $ is linear after a change of holomorphic coordinates (see \cite{Kau65} or \cite[Section 2.2]{Akh95})

We choose a sufficiently small $G$-stable ball $K$ and a privileged polydisc $\wK \subset K$. Denote $\cF^i(\wK):=\mathrm{Res}_{K \rightarrow \wK}(\cF^i(K))$ where $\mathrm{Res}_{K \rightarrow \wK}$ is the natural restriction map. By abuse of notations, we still use $\delta^i$ to denote the induced morphism. It is routine to check that $\cF^i(\wK)$ is still a Banach vector space ($i=0,1,2$) and the map $\delta^i$ is still analytic ($i=0,1$). We equipped $\cF^i(\wK)$ with an induced $G$-action from the one on $\cF^i(K)$ as follows. For $P \in \cF^i(\wK)$ and $g \in G$, choose any extension $\widetilde{P}\in \cF^i(K)$ with $\widetilde{P}\mid_{\wK} =P$ and define 
\be\label{indgactonf} g\cdot P:=(g\cdot \widetilde{P})\mid_{\wK}.
\ee
By analytic continuation, this gives rise to well-defined $G$-actions with respect to which the morphism $\delta^i$ is $G$-equivariant. As a sequence, $Z_{\wK}:=(\delta^1)^{-1}(0)$ is still a $G$-stable Banach analytic space. Note that Lemma \ref{lem:fixandequi} is obviously still valid for $\cF(\wK)^i$. So,
taking the derivation with respect to the distinguished points gives  $G$-equivariant maps of tangent spaces 
\be \label{gtangent} T_{(\mathrm{id,z})}\cF^0(\wK) \overset{d\delta^0}{\longrightarrow}T_{\varphi}\cF^1(\wK)\overset{d\delta^1}{\longrightarrow} T_{0}\cF^2(\wK)
\ee 
\begin{lemma} Let $\mathbf{0}$ be one of the three distinguished points. Then there exists a canonical $G$-equivariant isomorphism 
$$T_{\mathbf{0}}\cF^i(\wK) \overset{\cong}{\rightarrow}\cF^i(\wK)$$

\end{lemma}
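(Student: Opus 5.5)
We treat the three cases separately but with one mechanism: each $\cF^i(\wK)$ is either a Banach vector space ($i=1,2$) or an open subset of an affine Banach space ($i=0$). The canonical isomorphism $T_{\mathbf{0}}\cF^i(\wK)\to\cF^i(\wK)$ is the usual identification of the tangent space at a point with the model space, via velocities of curves. For $i=1,2$, a tangent vector at $\varphi$ (resp. $0$) is represented by the curve $t\mapsto \varphi+tq$ (resp. $t\mapsto tp$), and we send it to $q$ (resp. $p$). For $i=0$, we embed $\cF^0(\wK)$ as an open subset of the affine space of tuples $(\gamma_0,\ldots,\gamma_r,z+\Phi)$ with $\gamma_i$ matrix-valued and $\Phi$ vector-valued functions in the same Banach class. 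The model vector space consists of tuples $(\beta_0,\ldots,\beta_r,\Psi)$, and a tangent vector at $(\mathrm{id},z)$ is represented by $t\mapsto(\mathrm{id}+t\beta,\,z+t\Psi)$. (Here the right-hand side "$\cF^0(\wK)$" in the statement should be read as this underlying Banach vector space.) That these maps are bijective, linear and independent of choices is standard Banach calculus, and we will not belabor it.

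The substantive point is $G$-equivariance. The action on each $\cF^i(\wK)$ is defined by (\ref{indgactonf}) from (\ref{gactonf}). Since the $G$-action on $(\mathbb{C}^n,0)$ has been linearized and the $G$-actions on the free modules $\cF^j$ come from finite-dimensional representations (Theorem \ref{Gres}), the map $P\mapsto g\cdot P$ has the form $P\mapsto A_g\circ P\circ L_g$. Here $L_g$ is composition with the linear map $g^{-1}$ on $\wK$, and $A_g$ is conjugation by constant matrices. This map is therefore a continuous \emph{linear} operator of Banach spaces on $\cF^1(\wK)$ and $\cF^2(\wK)$. A linear map is its own differential at every point. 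It fixes $\varphi$ and $0$ by Lemma \ref{lem:fixandequi}, so $g$ maps the curve $\varphi+tq$ to $\varphi+t(g\cdot q)$. Hence $dg_{\varphi}$ corresponds exactly to $q\mapsto g\cdot q$, which is the claimed equivariance.

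For $i=0$, the action $g\cdot(\gamma,\Phi)=(g\gamma(g^{-1}\cdot)g^{-1},\,g\circ\Phi\circ g^{-1})$ is again the restriction of a continuous linear map on the ambient affine-linear space. This uses the linearity of the action on $\mathbb{C}^n$: $g\circ(z+\Phi)\circ g^{-1}=z+g\Phi(g^{-1}z)$. The action fixes $(\mathrm{id},z)$, so differentiating the curve $t\mapsto(\mathrm{id}+t\beta,z+t\Psi)$ gives $(g\beta(g^{-1}\cdot)g^{-1},\,g\Psi(g^{-1}\cdot))$. This is the induced linear action on the model space.

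The main obstacle is the well-definedness of the action on the restricted spaces $\cF^i(\wK)$. The privileged polydisc $\wK$ need not be $G$-stable, so $g^{-1}\cdot z$ may leave $\wK$. Formula (\ref{indgactonf}) handles this through extensions to the $G$-stable ball $K$, and the argument relies on uniqueness of analytic continuation. I would check that this is compatible with linearity and continuity: since $\cF^i(\wK)$ is defined as the image of $\mathrm{Res}_{K\to\wK}$, the action is the pushforward of a linear action through a linear, injective restriction map (injectivity follows from the identity theorem). The pushforward is therefore linear, and the argument above goes through with $K$ in place of $\wK$, followed by transport along $\mathrm{Res}$.
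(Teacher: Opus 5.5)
Your proposal is correct and follows the paper's argument. You identify $T_{\mathbf{0}}\cF^i(\wK)$ canonically with the model Banach space, and note that the $G$-action (pullback along the linear action on $\mathbb{C}^n$ composed with conjugation by constant matrices) is linear, affine-linear around $(\mathrm{id},z)$ when $i=0$, so its differential at the fixed distinguished point is the action itself; your separate treatment of $i=0$ and of the non-$G$-stable $\wK$ (transport along the injective restriction map) only makes explicit what the paper's short proof leaves implicit.
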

\begin{proof}
Since $\cF^i(\wK)$ is a Banach vector space, it is canonically identified with its tangent space $T_{\mathbf{0}}\cF^i(\wK)$. It remains to check if the $G$-action on $\cF^i(\wK)$ and the induced $G$-action on $T_{\mathbf{0}}\cF^i(\wK)$ are compatible. However, it follows from the fact the $G$-actions (\ref{gactonf}) and (\ref{indgactonf}) are linear (they are defined by taking pullback and conjugation) so their derivative at the fixed point gives the same formula.
\end{proof}
In this way, the sequence (\ref{gtangent}) can be rewritten simply as \be \label{gtangenttof} \cF^0(\wK) \overset{d\delta^0}{\longrightarrow}\cF^1(\wK)\overset{d\delta^1}{\longrightarrow} \cF^2(\wK).
\ee  which allows to characterize deformations of $(X,0)$ or equivalently deformations of the fixed resolution (\ref{gresfix}) over the double point. In other words, passing to the inductive limit over a system of neighborhoods $K$ of $0$, one gets a complex of sheaves
\begin{equation*} \bar{\cF}^0 \overset{d\delta^0}{\longrightarrow}\bar{\cF}^1\overset{d\delta^1}{\longrightarrow} \bar{\cF}^2.
\end{equation*} 
\begin{thrm}$\mathrm{(}$\cite[Proposition 2.1]{Don72}$\mathrm{)}$
$\mathcal{R}:=\ker(d\delta^0)/\mathrm{im}(d\delta^0) $ is a sheaf over $X$ and one has an isomorphism  $$\mathcal{R}_0 \cong T^1_{(X,0)}$$ of $\mathbb{C}$-vector spaces.
\end{thrm}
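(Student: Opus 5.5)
The plan is to identify $\mathcal{R}_0$ directly with first-order deformations of the fixed resolution (\ref{gresfix}) modulo trivial ones. Then invoke the (non-equivariant) Donin correspondence underlying Theorem \ref{gcordef} together with Theorem \ref{def: t1}. Throughout, the quotient is read as $\ker(d\delta^1)/\mathrm{im}(d\delta^0)$, which is presumably what is intended. Since stalks commute with filtered inductive limits, and the limit of the Banach spaces $\cF^i(\wK)$ is the space of germs of the corresponding matrix functions, $\mathcal{R}_0$ is computed by the complex of germs at $0$.

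\textbf{Step 1: linearize the two differentials.} Write a point of $\cF^1$ near $\varphi$ as $\varphi+\epsilon\psi$ with $\epsilon^2=0$. The condition $(\varphi+\epsilon\psi)\circ(\varphi+\epsilon\psi)=0$ gives
\[
d\delta^1(\psi)_i=\psi_{i-1}\varphi_i+\varphi_{i-1}\psi_i .
\]
So $\ker d\delta^1$ consists exactly of the deformations of the complex $\cF^\ubul$ over $\mathbb{D}$. Differentiating the action $\omega$ at $(\mathrm{id},z)$ along $(\mathrm{id}+\epsilon a,\,z+\epsilon\theta)$ gives
\[
d\delta^0(a,\theta)_i=a_{i-1}\varphi_i-\varphi_i a_i-\theta(\varphi_i),
\]
where $\theta\in\Theta_{\bC^n,0}$ acts entrywise. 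These are precisely the first-order deformations that are trivial, namely those induced by an automorphism of $\cF^\ubul\otimes\bC[\epsilon]$ together with a coordinate change. By the relative Nakayama argument of Lemma \ref{relnaklem}, every element of $\ker d\delta^1$ is again a resolution of some $\cO_{\cX}$ flat over $\mathbb{D}$. Combined with Proposition \ref{deftores} and the uniqueness-up-to-isomorphism of extended resolutions noted after it (all with $G$ trivial), this shows that $\ker d\delta^1/\mathrm{im}\, d\delta^0$ at the stalk is in bijection with isomorphism classes of deformations of $(X,0)$ over $\mathbb{D}$, i.e. with $T^1_{(X,0)}$.

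\textbf{Step 2: make the bijection explicit and linear.} Given a cocycle $\psi$, the condition $\psi_1\varphi_2\equiv-\varphi_1\psi_2 \equiv 0 \pmod I$ shows that $\psi_1:\cF^1\to\cF^0$ descends to a map $I=\mathrm{im}\,\varphi_1\to\cO_{X,0}$. This gives a $\bC$-linear map $\ker d\delta^1\to\Hom(I,\cO_{X,0})$. It is surjective: lift $\lambda\in\Hom(I,\cO_{X,0})$ to $\psi_1$, then solve successively for $\psi_i$ using exactness of $\cF^\ubul$. At each stage $\psi_{i-1}\varphi_i$ lands in $\mathrm{im}\,\varphi_{i-1}$ modulo the previous choices. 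The image of $d\delta^0$ maps onto $\mathrm{im}(\alpha)$: the $\theta$-part gives exactly $f\mapsto\theta(f)$, and the $a$-part gives zero in $\Hom(I,\cO_{X,0})$ modulo $\mathrm{im}\,\alpha$, because $a_0\varphi_1-\varphi_1a_1$ induces multiplication by $a_0$ composed with a change of generators. Hence we obtain a surjection $\mathcal{R}_0\to\mathrm{coker}(\alpha)\cong T^1_{(X,0)}$.

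\textbf{Step 3: injectivity, and why $\mathcal{R}$ lives on $X$.} This is the main obstacle. Suppose $\psi$ induces $f\mapsto\theta(f)$ modulo $I$. Subtracting $d\delta^0(0,\theta)$, we may assume $\psi_1(\cF^1)\subset I$. Write $\psi_1=\varphi_1a_1$, and correct by $d\delta^0(0\text{ on }\cF^0, a_1)$ so that $\psi_1=0$. Then $\varphi_1\psi_2=0$, so $\psi_2=\varphi_2a_2$. Inductively a homotopy $a=(a_i)$ is built with $\psi=d\delta^0(a,\theta)$, using exactness of $\cF^\ubul$ at each level; this is the comparison theorem for maps of resolutions lifting a given map. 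The same homotopy trick gives the sheaf statement. For $f\in I$, multiplication by $f$ on $\cF^\ubul$ lifts the zero map on $\cO_{X}$, so it is null-homotopic via some $h$. One then checks that $f\psi=d\delta^0(h\psi\text{-type term},0)$ for a cocycle $\psi$, so $I$ annihilates $\mathcal{R}$. Moreover, at points off $X$ the resolution is split exact, so the cohomology vanishes there. Therefore $\mathcal{R}$ is a coherent-type sheaf supported on $X$, and its stalk at $0$ is $T^1_{(X,0)}$ by Steps 1–2.
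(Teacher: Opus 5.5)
The paper does not prove this statement; it quotes it from Donin \cite[Proposition 2.1]{Don72}. The quotient there has an evident typo and must be $\ker(d\delta^1)/\mathrm{im}(d\delta^0)$, which is how you read it.

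Your Steps 2--3 give a correct, self-contained proof. Your formulas $d\delta^0(a,\theta)_i=a_{i-1}\varphi_i-\varphi_ia_i-\theta(\varphi_i)$ and $d\delta^1(\psi)_i=\psi_{i-1}\varphi_i+\varphi_{i-1}\psi_i$ exhibit the complex as the degree-one part of $\Hom^\ubul(\cF^\ubul,\cF^\ubul)$, enlarged by vector fields. The lifting and comparison arguments then show that $\psi\mapsto[\psi_1 \bmod I]$ induces
\[
\ker d\delta^1/\mathrm{im}\,d\delta^0\;\cong\;\Hom(I,\cO_{X,0})/\mathrm{im}\,\alpha=\mathrm{coker}(\alpha),
\]
and this is $T^1_{(X,0)}$ by Theorem \ref{def: t1}. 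In effect you prove $\mathcal{R}\cong\mathcal{E}xt^1(\cO_X,\cO_X)/\Theta$.

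Compared with the bare citation, your route buys an explicit isomorphism that is $\cO$-linear. It is also visibly $G$-equivariant for the actions (\ref{gactonf}) and the natural action on $\Hom(I,\cO_{X,0})$, because $\varphi_1$ and $\alpha$ are equivariant. The paper uses this equivariance silently later, when it identifies $T_0S^G$ with $(T^1_{(X,0)})^G$.

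Step 1, by contrast, is only heuristic and can be dropped. Lemma \ref{relnaklem} is not what gives exactness of the lifted complex with differentials $\varphi+\epsilon\psi$. The right tool is the long exact sequence of $0\to\epsilon\widetilde\cF^\ubul\to\widetilde\cF^\ubul\to\cF^\ubul\to 0$, using $\epsilon\widetilde\cF^\ubul\cong\cF^\ubul$. In any case, the bijection with isomorphism classes follows from Steps 2--3.

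Some points to tighten:

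\emph{The $a$-part.} It maps to exactly $0$ in $\Hom(I,\cO_{X,0})$, not merely modulo $\mathrm{im}\,\alpha$, since both $a_0\varphi_1$ and $\varphi_1a_1$ take values in $I$.

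\emph{The $\theta$-part.} It induces $-\alpha(\theta)$ only after applying the Leibniz rule, $\theta(\varphi_1)(v)\equiv\theta(\varphi_1(v)) \bmod I$; say so explicitly.

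\emph{The sheaf statement.} Your homotopy argument is correct. If $f\cdot\mathrm{id}_{\cF^j}=\varphi_{j+1}h_j+h_{j-1}\varphi_j$ with $h_{-1}=0$, then $a_i:=-h_{i-1}\psi_i$ satisfies $d\delta^0(a,0)=f\psi$ for every cocycle $\psi$. A simpler route is to observe that Steps 2--3 run verbatim at every stalk of $\Omega$. That gives $\mathcal{R}\cong\mathcal{H}om(\mathcal{I},\cO_X)/\alpha(\Theta)$ as sheaves, which is a coherent $\cO_X$-module supported on $X$, indeed on its singular locus.
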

\begin{prop} \label{ghyper} There exist
\begin{itemize}
\item[1.] a $G$-stable linear hyperplane $H$ in $\mathcal{F}^2(K)$ passing through the distinguished point $0$ such that $T_0 H\cong \mathrm{Im }\; d\delta^1$ and
\item[2.] a $G$-equivariant linear projection $\pi: \cF(\wK)^2 \rightarrow H$ such that $\mu^{-1}(0)$ is a $G$-stable Banach analytic submanifold of $\cF(\wK)$ with $T_0\mu^{-1}(0)=\ker d\delta^1=T_{\varphi}Z_{\wK}$ where $\mu:=\pi \circ \delta^1$.
\end{itemize} 
\end{prop}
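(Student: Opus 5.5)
The plan is to follow Donin's non-equivariant construction \cite[Section 3]{Don72} and to make each choice $G$-equivariant by averaging with the normalized Haar measure $\mu_G$ on the compact group $G$. Despite the word ``hyperplane'', what is needed is a closed linear subspace $H\subset \cF^2(\wK)$ with $T_0H=H=\overline{\mathrm{Im}\, d\delta^1}$, which Donin shows to be closed, together with a continuous linear projection $\pi:\cF^2(\wK)\to H$. Then $\mu=\pi\circ\delta^1$ is a submersion at $\varphi$ onto $H$, and the implicit function theorem for Banach manifolds gives that $\mu^{-1}(0)$ is a Banach submanifold with tangent space $\ker(\pi\circ d\delta^1)=\ker d\delta^1$. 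Donin's work already yields the existence of such $H$ and $\pi$ (Donin proves that $\mathrm{Im}\, d\delta^1$ is closed and complemented, using the privileged polydisc $\wK$). The task is therefore to arrange $G$-stability of $H$ and $G$-equivariance of $\pi$.

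Step 1: define $H:=\mathrm{Im}\, d\delta^1$. Since $\delta^1$ is $G$-equivariant and $\varphi$ is $G$-fixed (Lemma \ref{lem:fixandequi}, valid on $\wK$), the derivative $d\delta^1$ intertwines the linear $G$-actions. Hence $H$ is automatically $G$-stable, and it is closed by Donin.

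Step 2: take any continuous linear projection $\pi_0:\cF^2(\wK)\to H$ provided by Donin, and average it:
$$\pi(v):=\int_G g\cdot \pi_0(g^{-1}\cdot v)\,d\mu_G(g).$$
For this to make sense, one must check that the $G$-action on $\cF^2(\wK)$ is by bounded operators that are uniformly bounded and strongly continuous in $g$. This holds because $G$ is compact, acts linearly on $\mathbb{C}^n$ preserving $K$, and the action (\ref{indgactonf}) is pullback composed with conjugation by constant matrices. The integral is then a Bochner integral of a continuous $H$-valued function, and $\pi$ is a bounded linear map into $H$ because $H$ is closed and $G$-stable. For $h\in H$ we have $g^{-1}h\in H$, so $\pi_0(g^{-1}h)=g^{-1}h$ and therefore $\pi(h)=h$; thus $\pi$ is a projection. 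Equivariance follows from the left invariance of $\mu_G$.

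Step 3: set $\mu=\pi\circ\delta^1$. This map is $G$-equivariant, so $\mu^{-1}(0)$ is $G$-stable. Its differential at $\varphi$ is $\pi\circ d\delta^1=d\delta^1$, which is surjective onto $H$, and $\ker d\delta^1$ is complemented in $\cF^1(\wK)$ (again by Donin, or by averaging a complement in the same way as in Step 2). The Banach implicit function theorem therefore shows that $\mu^{-1}(0)$ is a Banach analytic submanifold near $\varphi$ with $T_\varphi\mu^{-1}(0)=\ker d\delta^1$. Finally, $T_\varphi Z_{\wK}=\ker d\delta^1$ because $Z_{\wK}=(\delta^1)^{-1}(0)$ and $Z_{\wK}\subset\mu^{-1}(0)$, with Donin's argument giving the equality of Zariski tangent spaces.

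The main obstacle is the analytic part: proving that $\mathrm{Im}\, d\delta^1$ is closed and complemented, which relies on privileged polydiscs, and checking that the averaging keeps us inside the Banach spaces. The latter requires the induced action on $\cF^i(\wK)$ from (\ref{indgactonf}) to be well defined and continuous even though $\wK$ itself need not be $G$-stable. I would handle this by first averaging on the $G$-stable ball $K$ and then restricting, using the density and continuity of $\mathrm{Res}_{K\to\wK}$.
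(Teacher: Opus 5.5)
Your route is the paper's own. You take $H=\mathrm{Im}\,d\delta^1$, which is closed and complemented by Donin because $\wK$ is privileged. You Haar-average a projection onto it and apply the Banach implicit function theorem. You are right that complementedness of $\ker d\delta^1$ is also needed there; the paper leaves it implicit.

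The gap is in Step 2, and it is exactly the obstacle you flag at the end without resolving. The average $\int_G g\,\pi_0\,g^{-1}\,d\mu_G(g)$ requires $G$ to act on $\cF^2(\wK)$ by uniformly bounded, strongly continuous operators. Your justification (``$G$ acts linearly preserving $K$'') concerns the ball $K$, not the privileged polydisc $\wK$. For a general compact $G$, $\wK$ cannot be chosen $G$-stable; for example, $\mathrm{SU}(2)$ acting on $\mathbb{C}^2$ stabilizes no polydisc.

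If $\wK$ is not $G$-stable, the action is unbounded for the norm of $\cF^2(\wK)$. Pick $g\in G$ and $z_0\in\wK$ with $w:=g^{-1}z_0\notin\wK$. By polynomial convexity of $\wK$ there is a polynomial $p$ with $\sup_{\wK}|p|=1<|p(w)|$. Then the entries $p^k$ have norm $1$ on $\wK$, while $|(g\cdot p^k)(z_0)|=|p(w)|^k\to\infty$. So (\ref{indgactonf}), which is defined only on the dense subspace of restrictions from $K$, does not extend to the Banach space. In particular $g\cdot h$ is not even defined for a general $h\in H=d\delta^1(\cF^1(\wK))$, and the Bochner integral has no meaning. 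The paper's proof has the same lacuna: analytic continuation makes (\ref{indgactonf}) well defined on restrictions from $K$, but gives no continuity in the $\wK$-norm.

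Your proposed repair (average on $K$, then restrict by density) does not close this.
\begin{enumerate}
\item[(a)] If you norm restrictions by the sup over $K$, the $G$-action is bounded, but you are then effectively working on the ball. There privilegedness is not available, and closedness and complementedness of $\mathrm{Im}\,d\delta^1$ are not known. That is precisely why Donin passes to privileged polydiscs.
\item[(b)] An equivariant operator built on $\cF^2(K)$ can be transported to $\cF^2(\wK)$ by density only if it satisfies an estimate in the $\wK$-norm. Nothing in averaging over $K$ provides one, since the averaged operator is assembled from the translates $g\cdot$, which are unbounded in that norm.
\end{enumerate}
What is missing is one of two things. Either a $G$-stable compact neighbourhood of $0$ on which Donin's splittings hold (closed, complemented images and kernels of $d\delta^0$ and $d\delta^1$), or a mechanism for equivariance that does not average operators on $\cF^2(\wK)$. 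Neither your proposal nor the paper supplies one.
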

\begin{proof}
Since $\wK$ is priviledged and $d\delta^1$ is $G$-equivariant, $\mathrm{Im }\; d\delta^1$ is a $G$-stable closed complemented subspace in  $\cF^2(\wK)$. Hence,  we can take $H$ to be any $G$-stable linear hyperplane in  $\cF^2(\wK)$, whose tangent space at the distinguished point is $G$-equivariantly identified with $\mathrm{Im}\;d\delta^1$ (or simply take $H=\mathrm{Im}\;d\delta^1$). As $G$ is compact the existence of $\pi$ follows from the fact that we can take a linear projection on $H$ and make it $G$-equivariant by taking the average with respect to a Haar measure on $G$. Next, by construction, the differential of $\mu=\pi\circ \delta^1$ at the distinguished point is surjective. Hence, $\mu^{-1}(0)$ is a Banach analytic submanifold of $\mathcal{F}^1(K)$. The $G$-stability  of $\mu^{-1}(0)$ follows from the $G$-equivariance of $\mu$ being the composition of two $G$-equivariant maps.  Finally, it is routine to check the last chain of equivalences.
\end{proof}

\begin{proof}[Proof of Theorem \ref{thm: compact}]
Let $N$ be a  $G$-stable hyperplane in $H$ passing through the distinguished point $0$ such that we have a $G$-decomposition of tangent spaces
$$T_0N\oplus \mathrm{Im }\;d\delta^0 =T_0 H$$ where $H$ is given by Proposition \ref{ghyper}. In this way $T_0N$ is of dimension $l$ so that $N$ is a finite-dimensional manifold of dimension $l$ and $S:=N\cap Z_{\widetilde K} $ is a finite-dimensional analytic subspace in $Z_{\widetilde K}$, whose tangent space at the distinguished point is of dimension $l$. Moreover, $S$  inherits a natural $G$-action (since $Z_{\widetilde K}$ and $N$ do) in a way that the natural inclusion $\iota: S \hookrightarrow Z_{\widetilde K}$ is $G$-equivariant. For the sake of Theorem \ref{defvartodefres} and Theorem \ref{gcordef}, this gives a rise to a $G$-equivariant deformation  $\pi: (\mathcal{X},0) \rightarrow (S,0)$ of $(X,0)$. 

Finally, we turn to the equivariant semi-universality. Note also that $\ker d\sigma^0$ is also a $G$-stable closed completemented subspace in $\mathcal{F}^0(\tilde K)$. Therefore, for the compactness of $G$, we can take a $G$-stable hyperplane $P$ in $\mathcal{F}^0(\tilde K)$ whose tangent space at the distinguished point is a complementary subspace to $\ker d\delta^0$. Recall that the equivariant semi-universaltity means that any other $G$-equivariant deformation of $(X,0)$ is defined by the pullback of the family constructed along a $G$-equivariant morphism of germs of complex spaces.  It can be shown by following the same lines as in the last two paragraphs of the proof of \cite[Theorem 3.1]{Don72} using crucially the fact that the natural morphism $P\times S \rightarrow Z_{\tilde K}$ is an isomorphism of Banach analytic spaces, which is further naturally $G$-equivariant by the choice of $P$ and $S$. As a matter of fact, any map appearing in the argument is $G$-equivariant. In particular, the $G$-equivariant semi-universal deformation we have just constructed is unique up to $G$-equivariant isomorphisms.
\end{proof}
\begin{proof}[Proof of Theorem \ref{thm: reductive}]
By definition, the $\widetilde G$-action is global. Thus, repeating the proof of Theorem \ref{thm: compact}, we obtain a $\widetilde G$-equivariant analytic maps of pointed (Banach) analytic spaces $$\iota: (S,0) \hookrightarrow (Z_{\widetilde K},0).$$
It follows now from a germ version of \cite[Corollary 5.1]{Doa21} that we can equip local holomorphic $(G,\widetilde G)$-actions on $(S,0)$ and $(Z_{\widetilde K},0)$ extending the initial $\widetilde G$-actions in a way that $\iota$ is $(G,\widetilde G)$-equivariant. By Donin's original construction of $Z_{\widetilde K}$, it corresponds to a semi-universal deformation of $\cF^\ubul$ (see (\ref{gresfix}) without taking into account the group actions). However, this time $\cF^\ubul$ carries local holomorphic $(G,\widetilde G)$-actions extending the initial $\widetilde G$-actions. Hence, it gives rise to a local holomorphic $(G,\widetilde G)$-equivariant deformation $\pi: (\mathcal{X},0) \rightarrow (S,0)$ of $(X,0)$. The fact that the restriction of the local $G$-action on the total space $(\mathcal{X},0)$ to the central fiber is nothing but the $(G,\widetilde G)$-action on $(X,0)$ follows from Lemma \ref{uniextact}. This finishes the proof.
\end{proof}
At last, we turn to deformations in the sense of Definition \ref{defdefpair}. 
\begin{proof}[Proof of Theorem \ref{thm: compactpai} and Theorem \ref{thm: reductivepai}] It follows directly from Corollary \ref{defvartodefrespai} coupled with Theorem \ref{thm: compact}, Theorem \ref{thm: reductive} and their proofs.
\end{proof}  
\begin{proof}[Proof of Corollary \ref{thm: reductivepairig}] Since the base $(S^{(G,\tilde{G})})$ of the semi-universal deformation $\pi: (\mathcal{X},0) \rightarrow (S^{(G,\tilde G)},0)$ of $((X,0),(G,\tilde G))$ is obtained by taking the fixed points of $(S,0)$, $$\dim_{\mathbb{C}}T_0S^{(G,\tilde G)} \leq \dim_{\mathbb{C}}T_0 S^{\tilde{G}}= \dim_{\mathbb{C}} \left(T^1_{(X,0)}\right)^{\tilde{G}}=0.$$ Hence, the semi-universal deformation is $(X,0) \rightarrow  \lbrace 0\rbrace$ so that the rigidity follows from the semi-universality.
\end{proof}
\begin{proof}[Proof of Corollary \ref{thm: compactpaipro}] By construction, one has a natural inclusion $(S^G,0)\rightarrow (S,0)$ of germs of complex spaces. Moreover, $T_0S^G \cong T_0S$. Therefore, the first statement follows from the smoothness of $(S^G,0)$ and the implicit function theorem. The second statement now is a direct consequence of the semi-universality.
\end{proof}
\begin{proof}[Proof of Theorem \ref{thm: mil}] One can follows the same lines as in the proof of \cite[Theorem]{Wal80} with an observation that $G$ being a compact group is still semi-simple (existence of Haar measures + Weyl's unitary trick) so that all extensions under consideration therein still split.
\end{proof}
\begin{ex} \label{ex: unicor}Consider the germ of complex spaces $(X,0)$ where $$X:=\lbrace (x,y)\in \mathbb{C}^2 \mid xy=0\rbrace$$ and $0$ is the origin. A semi-universal deformation of $(X,0)$ is given by $\pi: (\cX,0) \rightarrow (\bC,0)$
 where \be \label{semiuni}\cX:=\lbrace (x,y,s)\in \mathbb{C}^3 \mid xy=s\rbrace \ee  and $\pi$ is the natural projection on the last factor. The reductive complex Lie group $G:=\bC^* \times \bC^*$ acts on $X$ by the rule 
\begin{align*}
G\times X &\rightarrow X\\
((\lambda,\mu),(x,y)) &\mapsto (\lambda x, \mu y)
\end{align*} 
which extends to a $G$-action on $\cX$ by the rule \begin{align*}
G\times \cX &\rightarrow \cX\\
((\lambda,\mu),(x,y,s)) &\mapsto (\lambda x, \mu y,\lambda \mu s).
\end{align*} If one defines a $G$-action on $\bC$ by the rule \begin{align*}
G\times \bC &\rightarrow \bC\\
((\lambda,\mu),s) &\mapsto \lambda \mu s
\end{align*} then $\pi$ is actually $G$-equivariant with respect to these actions. Of course, these $G$-actions are only local on the level of germs of complex spaces. Note also that since $0$ is the only point fixed by the $G$-action then the pair $((X,0),G)$ is rigid in the sense of Definition \ref{defdefpairrig}.
\end{ex}  
\begin{ex} Consider the germ of complex spaces $(X,0)$ where $$X:=\lbrace x:=(x_1,x_2,x_3)\in \mathbb{C}^3 \mid f_1(x):=x_1^2 +x_2^3=0 \wedge  f_2(x):=x_2^3+x_3^2=0\rbrace$$ is a complete intersection with $0$ as an isolated singularity. Let $s:=(s_1,\ldots,s_9)$ be coordinates of $\bC^9$. The maps
$$F_1(x,s_1,\ldots,s_7):= f_1(x)+s_1x_2+s_2x_3+s_3x_2x_3z+s_4x_2^2$$ and $$F_2(x,s_1,\ldots,s_7):= f_2(x)+s_5x_1+s_6x_2+s_7x_1x_2$$
determines a semi-universal deformation  of $(X,0)$ 
\begin{align*}
\pi:(\bC^3 \times \bC^7,0) &\rightarrow (\bC^9,0)\\
(x,s_1,\ldots,s_7) &\mapsto (s_1,\ldots,s_7,F_1(x,s_1,\ldots,s_7), F_2(x,s_1,\ldots,s_7))
\end{align*} (cf. \cite[Example II.1.18.1.(2)]{GLS25}). The reductive complex Lie group $G:=\bC^*$ acts on $\bC^3$ by the rule 
\begin{align*}
G\times \bC^3 &\rightarrow \bC^3\\
(\lambda,x) &\mapsto (\lambda^3 x_1, \lambda^2 x_2,\lambda^3 x_3).
\end{align*} which clearly leaves $X$ invariant. On $\bC^7$ and $\bC^9$, we define the $G$-actions
\begin{align*}
G\times \bC^7 &\rightarrow \bC^7\\
(\lambda,s_1,\dots,s_7) &\mapsto (\lambda^4 s_1, \lambda^3 s_2,\lambda s_3,\lambda^2 s_4,\lambda^3 s_5,\lambda^4s_6,\lambda s_7)
\end{align*} and 
\begin{align*}
G\times \bC^9 &\rightarrow \bC^9\\
(\lambda,s_1,\dots,s_9) &\mapsto (\lambda^4 s_1, \lambda^3 s_2,\lambda s_3,\lambda^2 s_4,\lambda^3 s_5,\lambda^4s_6,\lambda s_7,\lambda^6s_8, \lambda^6s_9)
\end{align*}
In addition, we equip the product $\bC^3 \times \bC^7$ with the natural diagonal $G$-actions. Finally, it is straightforward to check that  $\pi$ is $G$-equivariant with respect to these $G$-actions. Once again, the pair $((X,0),G)$ is rigid in the sense of Definition \ref{defdefpairrig}.
\end{ex}  
\begin{ex} Consider the $A_{2m-1}$-singularity $X:=\lbrace (x,y) \in \mathbb{C}^2|x^{2m}+y^2=0 \rbrace$ with a cylic action of $G:=\mathbb{Z}/m\mathbb{Z}$ given by $\zeta\cdot(x,y)=(\zeta x,y)$ for $\zeta$ a generator of $G$. The semi-universal deformation of $(X,0)$ is given by
$$\mathcal{X}:=\lbrace (x,y,s_0,\ldots,s_{2m-2})\in \mathbb{C}^2 \times \mathbb{C}^{2m-1} | x^{2m}+y^2+\sum_{i=0}^{2m-2}s_ix^i \rbrace$$ which can be equipped with a natural $G$-action
$$\zeta \cdot(x,y,s_0,\ldots,s_{2m-2}) =(\zeta x,y,\zeta^0 s_0,\ldots,\zeta^{-(2m-2)}s_{2m-2})$$ extending the given one. Together with the projection  $ \pi: (\mathcal{X},0)\rightarrow (S,0):=(\mathbb{C}^{2m-1},0)$, it gives the $G$-equivariant semi-universal deformation of $(X,0)$. However, $(S^G,0)=(\mathbb{C}^2,0)$ (only $s_0$ and $s_m$ are fixed by the extended $G$-action). So the total space of the semi-universal deformation of the pair $((X,0),G)$ is given by $$\lbrace (x,y,s_0,s_{m})\in \mathbb{C}^2 \times \mathbb{C}^{2m-1} | x^{2m}+y^2+s_0+s_m x^m \rbrace.$$ Of course, this can also be seen in terms of the Tjurina algebra $$T^1_{(X,0)}=\mathbb{C}\{ x\}/(x^{2m-1})=\mathrm{Span} \lbrace 1,\ldots,x^{2m-2} \rbrace$$ with the same induced $G$-action  $\zeta \cdot x =\zeta x$. In this way, $$T^1_{((X,0),G))}=(T^1_{(X,0)})^G=\mathrm{Span}\lbrace 1,x^m \rbrace.$$ Thus, symmetry rigidifies significantly the classification problem.
\end{ex}
\section{Optimality of the reductivity assumption} \label{sec: op} This section is devoted to showing that our reductivity assumption in what we have done so far is really optimal. 

Let $G=(\bC^* \times \bC^*) \rtimes (\bC \times \bC)$ which is obviously non-reductive. Consider once again the germ of complex spaces $(X,0)$ given
Example \ref{ex: unicor} but this time equipped with the following local holomorphic $G$-action \begin{align*}
(G,\mathrm{id})\times (X,0) &\rightarrow (X,0)\\
((\lambda,\mu,a,b),(x,y)) &\mapsto \left(\frac{\lambda x}{1-a x},\frac{\mu y}{1-b y}\right)
\end{align*} which generates four vector fields $$v_1:=x\partial_x,v_2:=y\partial_y,v_3:=x^2\partial_x,
v_4:=y^2\partial_y$$ on $(X,0)$ together with the following Lie bracket relations  \be \label{liebracen}\left\{\begin{matrix}
[v_1,v_2]=0;[v_1,v_3]=v_3; [v_1,v_4]=0\\
[v_2,v_3]=0;[v_2,v_4]=v_4;[v_3,v_4]=0
\end{matrix}\right.\ee
Of course, these vector fields constitute the Lie algebra $\mathfrak{g}$ associated to $G$. The aim is to show that this local $\mathbb{C}$-action on $(X,0)$ can not be extended to its semi-universal deformation (\ref{semiuni}). The method is the same as in  \cite{Doa20, Doa24a}: Lie-theoretic obstructions to extensions of vector fields.

First, we are going to describe explicitly the general form of vector fields on $\mathcal{X}$. Basically, $\mathcal{X}$ is obtained by gluing $U_x:\lbrace x\neq 0 \rbrace \subset \mathcal{X}$ with coordinates $(x,s)$  and $U_y:\lbrace y\neq 0 \rbrace \subset \mathcal{X}$ with coordinates $(y,t)$ along the overlap $U_{xy}:=U_x\cap U_y= \lbrace x\neq 0, y\neq 0 \rbrace \subset \mathcal{X}$, subject to the rule \begin{equation} \label{glurul} t=s, \; x=t/y.\end{equation} 
A general vector field on $U_x$ is of the form
\begin{equation*} 
v_x:=A(x,s)\partial_x +B(x,s)\partial_s
\end{equation*} where $A,B$ are elements of $\mathcal{O}_{U_x,0}$ and a general vector field on $U_y$ is of the form
\begin{equation*} 
v_y:=C(y,t)\partial_y +D(y,t)\partial_t
\end{equation*} where $C,D$ are elements of $\mathcal{O}_{U_y,0}$. Therefore, $v_x$ and $v_y$ constitute a vector field on $\mathcal{X}$ if they are the same on the overlap $U_{xy}$, i.e. 
\begin{equation} \label{loctoglo}
A(x,s)\partial_x +B(x,s)\partial_s=C(y,t)\partial_y +D(y,t)\partial_t.
\end{equation}
\begin{lemma} \label{vecgenfor} A vector field $v$ on $(\mathcal{X},0)$ whose restriction on $U_x$ is \begin{equation}  
v_x:=A(x,s)\partial_x +B(x,s)\partial_s
\end{equation} and on $U_y$ is 
\begin{equation}  
v_y:=C(y,s)\partial_y +D(y,s)\partial_s
\end{equation}
  must satisfy the following constraints
\be \label{const}\left\{\begin{aligned}
A(x,s)
&= x^2f(s)+xg(s)+h(s)\\[4pt]
B(x,s)
&= b(s)\\[4pt]
C(y,t)
&= -y^2\frac{h(t)}{t}-y\left(g(t)-\frac{b(t)}{t}\right)-tf(t) \\[4pt]
D(y,t)
&= b(t)
\end{aligned}
\right.
\ee  where $f,g,h$ and $b$ are elements of $\cO_{\bC,0}$ such that \be \label{constcoe} h(0)=b(0)=0.\ee 
\end{lemma}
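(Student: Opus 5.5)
The plan is to convert the vector field on the chart $U_y$ into the coordinates of $U_x$ via the gluing rule (\ref{glurul}), to compare coefficients in (\ref{loctoglo}), and then to impose holomorphicity on each chart.

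\textbf{Step 1: change of variables.} On the overlap we have $y=t/x$ and $t=s$. Hence the coordinate vector fields are related by
\[
\partial_x^{(x,s)} = -\frac{y^2}{t}\,\partial_y,\qquad \partial_s^{(x,s)} = \frac{y}{t}\,\partial_y+\partial_t ,
\]
where the left-hand sides are taken in the $(x,s)$ chart. These identities follow by differentiating $y=s/x$ and $t=s$ with respect to $x$ and $s$.

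\textbf{Step 2: compare coefficients.} Substituting into (\ref{loctoglo}) gives two conditions:
\[
D(y,t)=B(x,s),\qquad C(y,t)=-\frac{y^2}{t}A\!\left(\frac{t}{y},t\right)+\frac{y}{t}\,B\!\left(\frac{t}{y},t\right).
\]
The first condition says that $B(x,s)$, viewed as a function on the overlap, equals $D(s/x,s)$. This function is holomorphic in $x$ near $0$ and also holomorphic in $y=s/x$ near $0$. Expanding in Laurent series in $x$ on the overlap, $B$ has only nonnegative powers of $x$ and $D$ has only nonpositive powers. Therefore both must be independent of $x$, so $B=D=b(s)$.

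\textbf{Step 3: determine $A$.} Write $A(x,s)=\sum_{k\ge0}a_k(s)x^k$ and substitute $x=t/y$ into the second condition. This gives
\[
C=-\sum_{k\ge0} a_k(t)\,t^{k-1}y^{2-k}+\frac{y}{t}\,b(t).
\]
Since $C$ is holomorphic in $y$, all negative powers of $y$ must vanish, which forces $a_k=0$ for $k\ge 3$. Setting $a_2=f$, $a_1=g$ and $a_0=h$ yields exactly the stated formula for $C$.

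\textbf{Step 4: holomorphicity in $t$.} The coefficient $h(t)/t$ and the term $b(t)/t$ must be holomorphic at $t=0$, which requires $h(0)=b(0)=0$. This is (\ref{constcoe}).

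\textbf{Main obstacle.} The delicate step is the Laurent separation argument in Steps 2 and 3. The two charts are not open neighbourhoods of $0$ in the usual sense, so one has to justify that the functions agree as convergent Laurent series on a punctured annular region of the overlap, uniformly in the base variable. The plan is to fix $s\neq0$ small, so that the overlap in $x$ contains an annulus, compare Laurent coefficients there, and then extend in $s$ by the identity theorem.
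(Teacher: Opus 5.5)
Your proof is correct and follows the same route as the paper. Both arguments transform the coordinate vector fields via the gluing rule (\ref{glurul}), substitute into (\ref{loctoglo}), and read off the constraints from holomorphicity on each chart. The differences are cosmetic: you push $\partial_x,\partial_s$ into the $(y,t)$ chart and solve for $C,D$, whereas the paper writes $\partial_y,\partial_t$ in the $(x,s)$ chart and solves for $A,B$. You also supply the Laurent-series-on-an-annulus argument (fixing $s\neq 0$), which the paper compresses into ``follows from the holomorphicity''.
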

\begin{proof}
By the gluing rule (\ref{glurul}), we have 
\be \label{vectrajac}
\left\{\begin{aligned}
\partial_y
&= -\frac{x^2}{s}\partial_x \\[4pt]
\partial_t
&= \frac{x}{s}\partial_x+\partial_s
\end{aligned}
\right.
\ee whose subsitution into (\ref{loctoglo}) implies that \[
\left\{\begin{aligned}
A(x,s)
&= -\frac{x^2}{s}C\left( \frac{s}{x},s\right)+\frac{x}{s}D\left( \frac{s}{x},s\right)\\[4pt]
B(x,s)
&= D\left( \frac{s}{x},s\right).
\end{aligned}
\right.
\]Therefore, (\ref{const}) and (\ref{constcoe}) follows from the holomorphicity.  
\end{proof}
 Suppose that the local $G$-action on $(X,0)$ can be extended to local $\bC$-actions on $(\mathcal{X},0)$ and on $(\mathbb{C},0)$, with respect to which the morphism $\pi$ is equivariant.  On one hand, for the sake of Lemma \ref{vecgenfor}, it gives rise to four vector fields on $\mathcal{X}$ of the form
$$\widetilde v_i:=(x^2f_i(s)+xg_i(s)+h_i(s))\partial_x+b_i(s)\partial_s$$ where $f_i,g_i,h_i$ are elements of $\mathcal{O}_{\bC,0}$, subject to the  condition \be \label{vec} h_i(0)=b_i(0)=0.\ee for $i=1,2,3,4$.  Moreover, the restriction of $\widetilde v_i$ on the central fiber is nothing but $v_i$. Note also that the independence of $h_i$ of $x$ and $y$ can also follow directly from the equivariance of $\pi$ since the differential of $\pi$ must map $\widetilde v_i$ to a vector field on $(\bC,0)$ and $\pi$ is just the projection on the third factor. On the other hand, $\widetilde v_i$'s must satisfy the same Lie bracket relations as in (\ref{liebracen}), i.e.
\be \label{liebratot}\left\{\begin{matrix}
[\widetilde v_1,\widetilde v_2]=0;[\widetilde v_1,\widetilde v_3]=\widetilde v_3; [\widetilde v_1,\widetilde v_4]=0\\
[\widetilde v_2,\widetilde v_3]=0;[\widetilde v_2,\widetilde v_4]=\widetilde v_4;[\widetilde v_3,\widetilde v_4]=0
\end{matrix}\right.\ee

\begin{lemma}\label{forfirord} With the above settings, one has \[
\left\{\begin{aligned}
\tilde v_1
&= \left[x+s\left(x^2f_1^1+xg_1^1\right)\right]\partial_x +s\partial_s, \\[4pt]
\tilde v_2
&= s\left(x^2f_1^2+xg_1^2\right)\partial_x +s\partial_s, \\[4pt]
\tilde v_3
&= \left[x^2 +s (x^2f_1^3+xg_1^3)\right]\,\partial_x
    \\[4pt]
\tilde v_4
&= s\left(x^2 f_1^4+xg_1^4-1\right)\,\partial_x.
\end{aligned}
\right.
\text{and  \;  }\left\{\begin{aligned}
\tilde v_1
&= -tyg_1^1\partial_y +t\partial_t, \\[4pt]
\tilde v_2
&= \left(y-tyg_1^2\right)\partial_y +t\partial_t, \\[4pt]
\tilde v_3
&= \left(-t- t yg_1^3\right)\,\partial_y
    \\[4pt]
\tilde v_4
&= \left(y^2 -tyg_1^4\right)\,\partial_y.
\end{aligned}
\right.
\] on $U_x$ and $U_y$ up to the first order, respectively.
\end{lemma}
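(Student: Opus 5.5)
The plan is to combine the general form from Lemma \ref{vecgenfor} with two pieces of information: the restriction $\widetilde v_i|_{s=0}=v_i$ on the central fiber, and the zeroth-order part of the bracket relations (\ref{liebratot}). I will expand each coefficient as a power series in $s$:
\[
f_i=f_0^i+sf_1^i+\cdots,\qquad g_i=g_0^i+sg_1^i+\cdots,\qquad h_i=h_1^i s+\cdots,\qquad b_i=b_1^i s+\cdots .
\]
By (\ref{vec}), the $h$ and $b$ series start in degree one. The goal is to determine all coefficients through order $s^1$ in the $U_x$ chart. After that, I will use the explicit transition formulas in (\ref{const}) to write the same fields in the $U_y$ chart.

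\textbf{Step 1 (order $s^0$).} On $U_x$ the central fiber is the $x$-branch, with $y=0$ and $s=0$. Restricting $\widetilde v_i$ there, the $\partial_x$-coefficient at $s=0$ is $x^2f_0^i+xg_0^i$. Comparing with $v_1=x\partial_x$, $v_2=y\partial_y$, $v_3=x^2\partial_x$ and $v_4=y^2\partial_y$ gives $(f_0^i,g_0^i)=(0,1),(0,0),(1,0),(0,0)$.

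I will then read off the $y$-branch through (\ref{const}). At $t=0$, the $\partial_y$-coefficient $C$ is $-y^2h_1^i-y(g_0^i-b_1^i)$. Matching this with the $v_i$ fixes $h_1^i$ and $b_1^i$:
\begin{itemize}
\item for $v_2$: $b_1^2-g_0^2=1$, so $b_1^2=1$;
\item for $v_4$: $-h_1^4=1$, so $h_1^4=-1$;
\item for $v_1$ and $v_3$: the $y$-part must vanish, so $h_1^1=h_1^3=0$ and $b_1^1=g_0^1=1$, $b_1^3=0$.
\end{itemize}
This produces the $s\partial_s$ terms and the $-s\partial_x$ term in $\widetilde v_4$ in the statement.

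\textbf{Step 2 (order $s^1$ in the $\partial_x$ part).} The remaining first-order unknowns are $f_1^i$ and $g_1^i$, which the statement keeps as free parameters. I will simply record them. I will also check that the $h_i$ contribute no further first-order terms, since $h_i=h_1^is+O(s^2)$ and the $h_1^i$ are already known.

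\textbf{Step 3 (translation to $U_y$).} I will substitute into $C(y,t)=-y^2h(t)/t-y(g(t)-b(t)/t)-tf(t)$ and truncate at first order in $t$. For example, for $\widetilde v_1$ this gives $-y(1+tg_1^1-1-O(t))-t\cdot 0=-tyg_1^1$ up to first order.

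The main obstacle is bookkeeping. "First order" must be interpreted consistently across the two charts, because $b(t)/t$ and $h(t)/t$ shift the order by one. Getting the exact form of the $U_y$ expressions requires tracking $b_2^i$ and $h_2^i$. I expect these either to be absorbed, or to be forced to vanish by the zeroth-order brackets, such as $[\widetilde v_1,\widetilde v_3]=\widetilde v_3$ at the lowest order. I would first try to use those brackets to kill the ambiguous second-order $b$ and $h$ terms. If that fails, I would state the $U_y$ formulas modulo $t^2$ and modulo those terms.
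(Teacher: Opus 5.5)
Your Steps 1--2 are the paper's own argument. Matching $\widetilde v_i|_{s=0}$ with $v_i$ on the $x$-branch, and via (\ref{const}) at $t=0$ on the $y$-branch, fixes $f_0^i,g_0^i,h_1^i,b_1^i$ and leaves $f_1^i,g_1^i$ free; the same matching also gives $h_1^2=0$ and $b_1^4=0$, which your list omits. Your bookkeeping worry is legitimate: the paper's proof silently truncates $h_i,b_i$ at order one before applying (\ref{const}), so its $U_y$ formulas are just transforms of the $U_x$ truncations, i.e.\ your fallback, since the true $t$-linear $\partial_y$-coefficient also contains $-ty^2h_2^i+tyb_2^i$, which order-zero brackets cannot remove (they only involve the $v_i$, which already satisfy (\ref{liebracen})), and this is harmless because only the $U_x$ formulas are used later.
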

\begin{proof}
On the $U_x$-chart, since $\left. \widetilde v_1\right|_{X_0\cap U_x}=v_1=x\partial_x$, one can write 
$$\left. \widetilde v_1\right|_{ U_x}=\left[x+s\left(x^2f_1^1+xg_1^1+h_1^1\right) \right] \partial_x+b_1^1s\partial_s$$ for the sake of Lemma \ref{vecgenfor}. Hence, 
$$\left. \widetilde v_1\right|_{ U_y}=\left[-y^2h_1^1-y(1-b_1^1) -tyg_1^1 \right] \partial_y+b_1^1t\partial_t$$ by the constraints (\ref{const}). However, $$\left. \widetilde v_1\right|_{X_0\cap U_y}=\left. v_1\right|_{U_y}=0.$$ Thus, $h_1^1=0$ and $b_1^1=1$ so that $$\tilde v_1
= -tyg_1^1\partial_y +t\partial_t $$ on $U_y$ so that
$$ \widetilde v_1=\left(x+sxg_1^1 \right) \partial_x+s\partial_s$$ on $U_x$ by the transform (\ref{vectrajac}).
The same strategy can be applied verbatim to the other vectors.
\end{proof}

Actually, more can be said about the extended $G$-action on the total space $(\mathcal{X},0)$ not just up to the first order.
\begin{prop} \label{veruni} The vector fields $v_3$ and $v_4$ corresponding to the unipotent part of $G$ are vertical, i.e. $b_1(s)=b_2(s)\equiv 0$ while the vector fields $v_1$ and $v_2$ corresponding to the reductive part of $G$ are not. In particular, the unipotent action of $\mathbb{C}\times \mathbb{C}$ if extended must act trivially on the base $(\mathbb{C},0)$ of the semi-universal family.
\end{prop}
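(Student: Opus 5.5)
The plan is to use only the Lie bracket relations (\ref{liebratot}) together with the projection of vector fields to the base. Since $\pi$ is equivariant and is the projection onto $s$, each $\widetilde v_i$ descends to a vector field $b_i(s)\partial_s$ on $(\bC,0)$. The assignment $\widetilde v_i\mapsto b_i(s)\partial_s$ is then a Lie algebra homomorphism from $\mathfrak g$ (with the relations (\ref{liebracen})) into the vector fields on $(\bC,0)$ vanishing at $0$. So everything is governed by the one-dimensional computation $[b_i\partial_s,b_j\partial_s]=(b_ib_j'-b_jb_i')\partial_s$. (Here I read the statement's ``$b_1=b_2\equiv0$'' as $b_3=b_4\equiv 0$, the coefficients attached to $v_3,v_4$.)

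First, Lemma \ref{forfirord} gives $b_1(s)=s+O(s^2)$ and $b_2(s)=s+O(s^2)$. Hence $v_1,v_2$ are not vertical, which is the second claim. It also gives $b_3,b_4\in s^2\cO_{\bC,0}$, since their linear parts vanish.

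Next I apply the relation $[\widetilde v_1,\widetilde v_3]=\widetilde v_3$ and project it to the base:
\[
b_1b_3'-b_3b_1'=b_3 .
\]
Suppose $b_3\not\equiv 0$ and write $b_3=c s^k+\dots$ with $c\neq0$ and $k\ge 2$. The lowest-order term of the left-hand side is $(k-1)c s^k$, because $b_1=s+\dots$. Comparing with the right-hand side gives $(k-1)c=c$, so $k=2$. I do not get a contradiction here, so I also need the relation $[\widetilde v_2,\widetilde v_3]=0$. Projected, it reads $b_2b_3'-b_3b_2'=0$, and its leading term is $(k-1)c s^k=c s^2\neq 0$. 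This is a contradiction, so $b_3\equiv0$. Symmetrically, $[\widetilde v_2,\widetilde v_4]=\widetilde v_4$ and $[\widetilde v_1,\widetilde v_4]=0$ force $b_4\equiv 0$.

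The step I expect to need care is this leading-order comparison. Using a single bracket is not enough, because $k=2$ survives the equation from $[\widetilde v_1,\widetilde v_3]=\widetilde v_3$. The argument works only by pairing it with the commuting relation from the other torus factor. Equivalently, the projected vector fields $b_1\partial_s$ and $b_2\partial_s$ have the same linear part, while $v_3$ has weight $1$ for $v_1$ and weight $0$ for $v_2$; no nonzero formal vector field on $(\bC,0)$ can carry these two different weights.

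Finally, vertical infinitesimal generators integrate to a local action of $\bC\times\bC$ that preserves the fibres of $\pi$. The induced action of the unipotent factor on $(\bC,0)$ is then trivial.
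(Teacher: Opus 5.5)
Your proposal is correct and follows essentially the paper's argument. Both project to the base via the Lie algebra homomorphism $\widetilde v_i\mapsto b_i(s)\partial_s$, take $b_1^1=b_1^2=1$ from Lemma \ref{forfirord}, and kill $b_3$ (resp.\ $b_4$) with the commuting relation $[e_2,e_3]=0$ (resp.\ $[e_1,e_4]=0$); your leading-term comparison is just the contrapositive of the paper's coefficient-by-coefficient induction.

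One small remark: since $b_3\in s^2\cO_{\bC,0}$, the leading coefficient $(k-1)c$ is nonzero for every $k\geq 2$, so the detour through $[\widetilde v_1,\widetilde v_3]=\widetilde v_3$ to force $k=2$ is unnecessary. The paper uses that relation only to obtain $b_1^3=0$, which you read off directly from Lemma \ref{forfirord}.
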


\begin{proof}
Let $\mathfrak{h}$ be Lie algebra of vector fields on $(\bC,0)$. Consider the map
\begin{align*}
\rho: \mathfrak{g} &\rightarrow \mathfrak{h}\\
\widetilde v_i &\mapsto e_i:=b_i(s)\frac{\partial}{\partial_s}
\end{align*} which is actually a well-defined Lie algebra homomorphism since the first two components $\frac{\partial}{\partial_x}$ and $\frac{\partial}{\partial_y}$ does not contribute to the component $\frac{\partial}{\partial_s}$ in the Lie brackets. Therefore, the vector fields $e_i$'s satisfy the same Lie bracket relation as in (\ref{liebratot}). Observe that on $\mathfrak{h}$, one can introduce a filtration $F$ given by the vanishing order at $0$ and then two facts that $$[F^p\mathfrak{h},F^p\mathfrak{h}]\subset F^{2p}\mathfrak{h} \text{ and } [F^p\mathfrak{h},F^q\mathfrak{h}]\subset F^{p+q-1}\mathfrak{h} $$ can be easily shown. Now, let $b_i(s)=\sum_{j\geq 1}b_j^i s^j$. Using these facts and the Lie bracket relations $$[e_1,e_3]=e_3 \text{ and } [e_2,e_4]=e_4  ,$$ we get that $b_1^3=b_1^4=0$. 
Besides, it follows from Lemma \ref{forfirord} that $b_1^1=b_1^2=1$. In particular, it implies that the action of the reductive part $(\bC^* \times \bC^*)$ can not be trivial on the base. This turns out to be the main reason for the triviality of the unipotent action on the base. Indeed, we are going to show that $b_j^3=0$ for all $j\geq 1$ by induction. Note that $b_1^3=0$ already. First we compute the Lie bracket  \[
0=[e_2,e_3]= \sum_{j\ge 1} \sum_{l\ge 1} (l-j) \, b_j^2 \, b_l^3 \, s^{\,j+l-1} \, \partial_s.
\] We argue on the order of $s$. Looking at the coefficient of $s^2$, we have $$0=(1-2)b_2^2b_1^3+(2-1)b_1^2b_2^3=b_1^2b_2^3=b_2^3$$ which justifies the base case. Suppose that $b_j^3=0$ for $1 \leq j \leq k$. Looking at the coefficient of $s^{k+1}$, we have
$$0=\sum_{j+l-1=k+1} (l-j) \, b_j^2 \, a_l^3 =kb_1^2b_{k+1}^3=b_{k+1}^3$$ which ends the induction argument. Thus, $b_3$ is trivial. The triviality of $b^4$ can be shown in the same way using the Lie bracket relation $[e_1,e_4]=0$.
\end{proof}
\begin{claim} The local $G$-action on $(X,0)$ can not be extended to local $\bC$-actions on $(\mathcal{X},0)$ and on $(\mathbb{C},0)$, with respect to which the morphism $\pi$ is equivariant, where $G$ is the non-reductive group complex Lie group $(\bC^* \times \bC^*) \rtimes (\bC \times \bC)$.
\end{claim}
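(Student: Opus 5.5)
The plan is to argue by contradiction at the level of infinitesimal generators, continuing the Lie-theoretic computation started in Lemma \ref{vecgenfor}, Lemma \ref{forfirord} and Proposition \ref{veruni}. Suppose the local $G$-action extends equivariantly to $(\mathcal{X},0)$ and $(\bC,0)$. Differentiating gives four vector fields $\widetilde v_1,\ldots,\widetilde v_4$ on $(\mathcal{X},0)$. Each restricts to $v_i$ on the central fiber, each has the shape (\ref{const}) with (\ref{constcoe}), and together they satisfy the bracket relations (\ref{liebratot}). I will derive a contradiction from the single relation $[\widetilde v_3,\widetilde v_4]=0$, after using Proposition \ref{veruni} to simplify.

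\textbf{Step 1: reduction to $\partial_x$-components.} By Proposition \ref{veruni}, the unipotent generators are vertical, i.e. $b_3=b_4\equiv 0$. Hence on the chart $U_x$, with coordinates $(x,s)$, we may write $\widetilde v_3=A_3(x,s)\partial_x$ and $\widetilde v_4=A_4(x,s)\partial_x$. By Lemma \ref{vecgenfor},
\[
A_i=x^2f_i(s)+xg_i(s)+h_i(s),\qquad i=3,4.
\]
Lemma \ref{forfirord} pins down the first-order behaviour in $s$:
\[
A_3=x^2+s\,(x^2f^3_1+xg^3_1)+O(s^2),\qquad A_4=s\,(x^2f^4_1+xg^4_1-1)+O(s^2).
\]
The essential point is the constant term $-s$ in $A_4$. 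It comes from gluing: the field $y^2\partial_y$ on the $y$-branch, transported by (\ref{vectrajac}), forces $h_4(s)=-s+O(s^2)$.

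\textbf{Step 2: the bracket.} For fields of the form $A\partial_x$ we have
\[
[A_3\partial_x,A_4\partial_x]=(A_3\,\partial_xA_4-A_4\,\partial_xA_3)\,\partial_x,
\]
and this must vanish identically by (\ref{liebratot}). Expand in powers of $s$. The $s^0$ coefficient vanishes automatically, since $A_4|_{s=0}=0$. For the $s^1$ coefficient, write $u:=x^2f^4_1+xg^4_1-1$. It equals
\[
x^2\,\partial_x u-2x\,u=x^2(2xf^4_1+g^4_1)-2x(x^2f^4_1+xg^4_1-1)=2x-x^2g^4_1 .
\]
This is not identically zero in $x$: its coefficient of $x$ is $2$. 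This contradicts $[\widetilde v_3,\widetilde v_4]=0$, so no such extension exists.

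\textbf{Main obstacle.} The delicate step is justifying the first-order normal forms, in particular that $h_4$ has nonvanishing linear term. This depends on the gluing constraint (\ref{const}), since $C_4(y,t)=-tf_4(t)+\cdots$ must restrict to $y^2$, and on the verticality from Proposition \ref{veruni}. I would recheck that derivation directly through (\ref{vectrajac}) rather than relying on the stated form.

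As a cross-check, I would redo the computation on $U_y$. There $\widetilde v_3=(-t+\cdots)\partial_y$ and $\widetilde v_4=(y^2+\cdots)\partial_y$, and the $t^1$ coefficient of the bracket is $-2y+\cdots\neq 0$. This confirms that the obstruction is chart-independent: it is the Lie-algebraic incompatibility of the two unipotent directions once the node is smoothed.
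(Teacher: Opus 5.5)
Your proposal is correct and follows the paper's proof. You assume an extension exists, use Proposition~\ref{veruni} and Lemma~\ref{forfirord} to write $\widetilde v_3,\widetilde v_4$ as pure $\partial_x$-fields on $U_x$ with the stated first-order forms, and observe that the $s^1$-coefficient of $[\widetilde v_3,\widetilde v_4]$ contains the term $2x$, contradicting (\ref{liebratot}). Your computation is in fact slightly more careful than the paper's: you keep the $f^3_1,f^4_1$ terms, check that they cancel, and obtain the coefficient $2x-x^2g^4_1$, where the paper displays $-sg^4_1+2sx$.
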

\begin{proof}
Suppose by contradiction that this is the case. Then by Lemma \ref{forfirord}, we can assume that  $$\tilde v_3 = \left[x^2 +sxg_1^3 \right]\partial_x \text{ and }\tilde v_4 = \left[s(xg_1^4-1)\right]\partial_x.$$ Their Lie bracket up to the first order is given by $$(-sg_1^4+2sx)\partial_x$$ which can never be $0$ no matter how one chooses $g_1^4$. Hence, the Lie bracket relation $[\tilde v_3,\tilde v_4]=0$ can not be preserved - a contradiction.
\end{proof}

Finally, we give examples of extendable non-reductive group actions. Let $G=(\bC^* \times \bC^*) \rtimes \bC$. Consider once again the germ of complex spaces $(X,0)$ given
Example \ref{ex: unicor} but this time equipped with the following local holomorphic $G$-action \begin{align*}
(G,\mathrm{id})\times (X,0) &\rightarrow (X,0)\\
((\lambda,\mu,a),(x,y)) &\mapsto \left(\frac{\lambda x}{1-a x},\mu y\right)
\end{align*} which generates four vector fields $v_1:=x\partial_x,v_2:=y\partial_y,v_3:=x^2\partial_x$ together with the following Lie bracket relations  \be \label{ccc}
[v_1,v_2]=0;[v_1,v_3]=v_3; [v_2,v_3]=0\ee
By using the vector fields $\tilde v_1, \tilde v_2,\tilde v_3$ given in Lemma \ref{forfirord}, subject to the same Lie bracket relations as in (\ref{ccc}) up to the first order, up to the first order, one can show that
\[
\left\{\begin{aligned}
\tilde v_1
&= x\partial_x +s\partial_s, \\[4pt]
\tilde v_2
&=s\partial_s, \\[4pt]
\tilde v_3
&= x^2 \,\partial_x.
\end{aligned}
\right.
\text{and  \;  }\left\{\begin{aligned}
\tilde v_1
&= t\partial_t, \\[4pt]
\tilde v_2
&= y\partial_y +t\partial_t, \\[4pt]
\tilde v_3
&= -t\,\partial_y
\end{aligned}
\right.
\] on  $U_x$ and $U_y$, respectively. Surprisingly, it can be easily checked that these vector fields are defined on $\mathcal{X}$, subject exactly to the Lie bracket relation (not just to the first order). Therefore, by integrating, they give rise to a holomorphic local $G$-action given by 
\begin{align*}
(G,\mathrm{id})\times (\mathcal{X},0) &\rightarrow (\mathcal{X},0)\\
((\lambda,\mu,a),(x,y,s)) &\mapsto \left(\frac{\lambda x}{1-a x},\mu y(1-ax),\lambda\mu s\right).
\end{align*} 
As expected from Proposition \ref{veruni}, the unipotent part acts trivial on the base while the reductive part does not. Finally, by symmetry, the same analysis can applied to the local holomorphic $G$-action on $(X,0)$ \begin{align*}
(G,\mathrm{id})\times (X,0) &\rightarrow (X,0)\\
((\lambda,\mu,a),(x,y)) &\mapsto \left(\lambda x,\frac{\mu y}{1-a y}\right)
\end{align*} where $G$ is still $(\bC^* \times \bC^*) \rtimes \bC$.
\appendix
\section{Analytic equations and approximation theorems}\label{sec: anaequapp}
\subsection{Artin's approximation theorems}

 Let $\bC\lbrace x, y \rbrace:=\bC\lbrace x_1,\ldots,x_n, y_1,\ldots,y_N\rbrace $  be the ring of convergent power series in $(x,y)$. Consider a system of analytic equations
\be \label{ae} f(x,y)=0
\ee where $f(x,y)=(f_1(x,y),\ldots ,f_m(x,y))$ is a vector with values in $\bC\lbrace x, y \rbrace $.
\begin{theorem}{{\rm{(}}\cite[Theorem 1.2]{Art68}{\rm{)}}}  If $\overline{y}(x)=(\overline{y}_1(x),\ldots,\overline{y}_N(x))$ with $\overline{y}_\nu(x) \in \bC[[x]]$ is a formal solution without constant term, solving (\ref{ae}), i.e., $f(x,\overline{y}(x))=0$. Then for any integer $c$, there exists a convergent solution $y(x=(y_1(x),\ldots,y_N(x)))$ of (\ref{ae}) such that $$y(x)\equiv \overline{y}(x) (\mod \mathfrak{m}^c)$$ where $ \mathfrak{m}^c$ is the maximal ideal of the ring $\mathbb{C}[[x]]$ of formal power series in $x$.
\end{theorem}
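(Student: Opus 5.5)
The statement is Artin's theorem \cite[Theorem 1.2]{Art68}, and I would prove it by the classical route: induction on the number $n$ of variables $x$, combined with a Newton (Tougeron) implicit function lemma and Weierstrass preparation and division. For $n=0$ there is nothing to prove, since every formal series in no variables is a constant and hence convergent. Fix $c$ and a formal solution $\overline{y}(x)$ of (\ref{ae}) without constant term.

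\textbf{Step 1: reduction to a Jacobian-nondegenerate system.} Consider the homomorphism $\bC\lbrace x,y\rbrace \to \bC[[x]]$, $y\mapsto \overline{y}(x)$. Its kernel $\mathfrak{p}$ is prime, because $\bC[[x]]$ is a domain, and it contains $f_1,\ldots,f_m$. I replace the system by a finite generating set of $\mathfrak{p}$; any convergent solution of the enlarged system also solves (\ref{ae}). Let $h$ be the height of $\mathfrak{p}$. Since $\bC\lbrace x,y\rbrace_{\mathfrak{p}}$ is regular and we are in characteristic $0$, the Jacobian criterion gives generators, say $f_1,\ldots,f_h$, together with an $h\times h$ minor $\delta$ of $\partial(f_1,\ldots,f_h)/\partial y$ such that $\delta\notin\mathfrak{p}$. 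It also gives some $g\notin\mathfrak{p}$ with $g\,\mathfrak{p}\subset (f_1,\ldots,f_h)$. By construction $\delta(x,\overline{y}(x))\neq 0$ and $g(x,\overline{y}(x))\neq 0$. Hence, among solutions close enough to $\overline{y}$, solving $f_1=\cdots=f_h=0$ is equivalent to solving the whole system. So it suffices to approximate solutions of $f_1=\cdots=f_h=0$ at which the minor $\delta$ does not vanish identically.

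\textbf{Step 2: Tougeron's lemma.} I would prove the following. Let $e$ be the order of $\delta(x,\overline{y}(x))$. If a convergent $y^0(x)$ satisfies $y^0\equiv\overline{y}\pmod{\mathfrak{m}^{2e+c}}$, then there is a convergent solution $y$ of $f_1=\cdots=f_h=0$ with $y\equiv y^0\pmod{\mathfrak{m}^{e+c}}$. The proof is a Newton iteration: use the adjugate of the Jacobian submatrix and write $y=y^0+\delta(x,y^0)^2z$. Taylor expansion then turns the equations into a system in $z$ to which the ordinary analytic implicit function theorem applies. So the problem reduces to producing a convergent approximate solution, i.e.\ a convergent $y^0$ that agrees with $\overline{y}$ to high order and solves the equations only approximately.

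\textbf{Step 3: reduction from $n$ to $n-1$ variables, the main obstacle.} The naive idea of truncating $\overline{y}$ fails, because truncation destroys the congruence $f(x,y^0)\equiv 0$ modulo $\delta^2\mathfrak{m}^{c}$ that Step 2 needs. Instead I would proceed as follows.

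1. After a generic linear change of coordinates, make $\delta(x,\overline{y}(x))$ regular in $x_n$.
2. Apply Weierstrass preparation to write $\delta(x,\overline{y}(x))=\text{unit}\cdot P$, where $P$ is a distinguished polynomial in $x_n$ of degree $d$ with coefficients in $\bC[[x']]$.
3. Weierstrass-divide each $\overline{y}_\nu$ by $P^2$, and likewise each $f_i(x,\overline{y})$, which lies in the ideal generated by $P^2$ times the relevant ideal.
4. Treat as new unknowns the coefficients of $P$, the remainders, and the quotients that appear, all of which are series in $x'$ only. The conditions ``$\delta(x,y)$ equals a unit times $P$'' and ``$f(x,y)\equiv 0 \bmod P^2$'' then become, after a further Weierstrass division in $x_n$ of the convergent expressions, a system of analytic equations in $x'=(x_1,\ldots,x_{n-1})$.
5. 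This new system has a formal solution, so the induction hypothesis provides a convergent solution agreeing with it to any prescribed order.
6. Reassembling gives a convergent $y^0$ that satisfies exactly the approximate equation required in Step 2 and agrees with $\overline{y}$ modulo $\mathfrak{m}^{2e+c}$.

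The delicate bookkeeping is in this step. One must check that the division is compatible, namely that $\bC\lbrace x\rbrace$ and $\bC[[x]]$ Weierstrass divisions agree on convergent inputs. One must also check that the orders are preserved under the reassembly, so that the $(n-1)$-variable approximation order translates into the $\mathfrak{m}^{2e+c}$-congruence.

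Finally, Step 2 applied to $y^0$ yields a convergent solution $y$ with $y\equiv\overline{y}\pmod{\mathfrak{m}^c}$. By Step 1 this $y$ also solves (\ref{ae}).
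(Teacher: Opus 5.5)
The paper gives no proof of this statement. It is quoted from \cite{Art68} in the appendix purely as background, so there is no internal argument to compare with. Your outline is the standard argument going back to Artin: induction on $n$, passage to the prime ideal $\mathfrak{p}$ and a Jacobian minor in the $y$-variables, a Newton--Tougeron lemma, and Weierstrass preparation and division in $x_n$. That architecture is right, but the key lemma is misstated.

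\textbf{Step~2 as written.} Its only hypothesis is $y^0\equiv\overline{y}\pmod{\mathfrak{m}^{2e+c}}$. Applied to a polynomial truncation of $\overline{y}$, it would already give the theorem; your own remark in Step~3 concedes this cannot be so easy, and no Newton iteration proves such a statement. Tougeron's lemma needs the ideal membership $f(x,y^0)\in\delta(x,y^0)^2\mathfrak{m}^c$. For $n\ge 2$ this does not follow from an order bound: for instance $x_2^3\in\mathfrak{m}^3$ but $x_2^3\notin(x_1^2)$.

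\textbf{The substitution.} It should be $y=y^0+\delta(x,y^0)\,M(x,y^0)\,z$, where $M$ is the adjugate of the $h\times h$ Jacobian block, padded by zeros. If $f(x,y^0)=\delta^2q$, then $J\,\delta Mz=\delta^2z$ gives $f(x,y)=\delta^2\bigl(q+z+H(x,z)\bigr)$ with $H$ of order $\ge 2$ in $z$. The implicit function theorem then gives $z\in\mathfrak{m}^c$ whenever $q\in\mathfrak{m}^c$. With your $y^0+\delta^2z$, the linear term is $\delta^2Jz$, and $J$ is not invertible, so the implicit function theorem does not apply.

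\textbf{The link from Step~3 to Step~2.} What Step~3 actually delivers, with $\hat P,\hat u$ the formal Weierstrass data and $P$ its convergent approximation, is $f(x,y^0)\in(P^2)$ and $\delta(x,y^0)=vP$. Here $v$ is a unit: $P(v-\hat u)\in\mathfrak{m}^{2e+c}$ and $\mathrm{ord}\,P=e$ force $v\equiv\hat u$ modulo $\mathfrak{m}^{e+c}$. Hence $f(x,y^0)=\delta(x,y^0)^2q$. Since $\mathrm{ord}$ is additive on $\bC\{x\}$ (its associated graded ring is a polynomial ring) and $f(x,y^0)\in\mathfrak{m}^{2e+c}$, you get $\mathrm{ord}\,q\ge c$. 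That is exactly Tougeron's hypothesis. It also shows that no $\mathfrak{m}^c$-condition has to be encoded in the $(n-1)$-variable system. Item~3 of Step~3 should therefore simply read $f_i(x,R)\in(P^2)$, with $R$ the remainder of $\overline{y}$ modulo $P^2$, since $f_i(x,\overline{y})$ is just $0$.

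\textbf{Smaller fixes.}
In Step~1, the Jacobian criterion yields a minor of $\partial f/\partial(x,y)$. To get one in $\partial f/\partial y$ alone, use $\mathfrak{p}\cap\bC\{x\}=0$ and differentiate $f(x,\overline{y}(x))=0$: modulo $\mathfrak{p}$, the $x$-columns become $\bC[[x]]$-combinations of the $y$-columns.
In Step~3, the Weierstrass quotients depend on $x_n$, so they cannot be unknowns in $x'$. Instead take $y^0=R+P^2T$, with $T$ a polynomial truncation of the formal quotient.
Also in Step~3, the coefficients of $x_n^k$ for $k\ge 1$ in $R$ may have nonzero constant terms (e.g.\ $\overline{y}_\nu=x_n$). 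They must be recentered before invoking the induction hypothesis, which is stated for solutions without constant term.
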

\begin{thrm}  {{\rm{(}}\cite[Theorem 1.3]{Art69}{\rm{)}}} \label{artappr} Let $\mathfrak{a}_1,\ldots ,\mathfrak{a}_N$ be proper ideals of $\bC\lbrace x\rbrace $. Suppose $\bar y(x)$ is a formal solution of (\ref{ae}) of the form 
$$\bar y_\nu(x)=u_\nu (x) +\bar v_\nu(x)$$ where $u_\nu (x) \in \bC\lbrace x \rbrace$ and $\bar v_\nu(x) \in \bC[[x]]$ such that $$\bar v_\nu(x) \equiv 0 (\mod \hat{\mathfrak{a} }_\nu:= \mathfrak{a}_\nu \bC[[x]]).$$  Then there exists a convergent solution $y(x)\in \bC\lbrace x\rbrace $ such that $y_\nu(x )\equiv u_\nu(x) (\mod \hat{\mathfrak{a} }_\nu).$
\end{thrm}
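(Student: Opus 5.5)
The plan is to reduce Theorem \ref{artappr} to the preceding approximation theorem (\cite[Theorem 1.2]{Art68}) by a change of unknowns that builds the side conditions into the shape of the solution. The point is that congruences modulo the ideals $\mathfrak{a}_\nu$ can be made automatic once $y_\nu - u_\nu$ is forced to be a combination of fixed generators of $\mathfrak{a}_\nu$ with new unknown coefficients.

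\textbf{Step 1: introduce new unknowns.} Since $\bC\lbrace x\rbrace$ is Noetherian, each $\mathfrak{a}_\nu$ is generated by finitely many convergent series $a_{\nu 1}(x),\ldots,a_{\nu k_\nu}(x)$. The extended ideal $\hat{\mathfrak{a}}_\nu=\mathfrak{a}_\nu\bC[[x]]$ is then generated by the same elements over $\bC[[x]]$. The hypothesis $\bar v_\nu\equiv 0 \pmod{\hat{\mathfrak{a}}_\nu}$ therefore gives formal series $\bar z_{\nu j}(x)\in\bC[[x]]$ with
\[
\bar v_\nu(x)=\sum_{j=1}^{k_\nu} a_{\nu j}(x)\,\bar z_{\nu j}(x).
\]
With unknowns $z=(z_{\nu j})$, I will consider the new analytic system
\[
g(x,z):=f\Big(x,\big(u_\nu(x)+\textstyle\sum_j a_{\nu j}(x) z_{\nu j}\big)_\nu\Big)=0 .
\]
Its coefficients lie in $\bC\lbrace x,z\rbrace$, since the $u_\nu$ and $a_{\nu j}$ are convergent and $f$ is convergent. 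By construction $\bar z$ is a formal solution of this system.

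\textbf{Step 2: remove the constant terms.} The formal solution $\bar z$ need not be free of constant term, so I shift it. Put $c_{\nu j}:=\bar z_{\nu j}(0)\in\bC$ and $z=c+w$, and define $h(x,w):=g(x,c+w)$. This is still convergent, because translating by constants preserves convergence. Then $\bar w:=\bar z-c$ is a formal solution of $h=0$ without constant term.

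\textbf{Step 3: apply the earlier theorem and conclude.} Applying \cite[Theorem 1.2]{Art68} to $h$ (with any $c\ge 1$) gives a convergent solution $w(x)$. Setting $y_\nu(x):=u_\nu(x)+\sum_j a_{\nu j}(x)\,(c_{\nu j}+w_{\nu j}(x))$ produces a convergent solution of $f(x,y)=0$. Moreover $y_\nu-u_\nu\in\mathfrak{a}_\nu\subset\hat{\mathfrak{a}}_\nu$, which is exactly the required congruence.

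The main obstacle, though mild, is making sure the substitution keeps everything inside convergent series and that the formal decomposition in Step 1 really exists. The first point is handled by the finite generation of $\mathfrak{a}_\nu$ in $\bC\lbrace x\rbrace$. The second follows from the definition of the extended ideal as the $\bC[[x]]$-span of the same generators. If one also wants the approximation $y\equiv\bar y \pmod{\mathfrak{m}^c}$, it comes for free from the congruence $w\equiv \bar w \pmod{\mathfrak{m}^c}$ that \cite[Theorem 1.2]{Art68} provides.
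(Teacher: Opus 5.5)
The paper does not prove this statement at all: it only recalls it, with a citation, in the appendix. Your reduction is correct and is the standard derivation, essentially Artin's own. You write $\bar v_\nu=\sum_j a_{\nu j}\bar z_{\nu j}$ over finitely many generators of $\mathfrak a_\nu$, treat the $z_{\nu j}$ as new unknowns, recentre them, and apply \cite[Theorem 1.2]{Art68}. It even yields $y_\nu-u_\nu\in\mathfrak a_\nu$ together with $y\equiv\bar y\pmod{\mathfrak m^c}$.

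The one justification you should repair is convergence. ``Translating by constants preserves convergence'' is false for a general element of $\bC\lbrace x,z\rbrace$: for a point $c$ outside its domain of convergence, $g(x,c+w)$ is not even defined. Already in Step 1, the claim $g\in\bC\lbrace x,z\rbrace$ needs the substituted series $u_\nu+\sum_j a_{\nu j}z_{\nu j}$ to vanish at the origin. Here is what makes it work. A formal solution of (\ref{ae}) satisfies $\bar y(0)=0$, as in \cite[Theorem 1.2]{Art68}. Properness of $\mathfrak a_\nu$, a hypothesis you never use, gives $a_{\nu j}(0)=0$ and $\bar v_\nu(0)=0$, hence $u_\nu(0)=0$. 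So $g$ is holomorphic on $\lbrace |x|<\varepsilon_R\rbrace\times\lbrace |z|<R\rbrace$ for every $R>0$, and it may be re-expanded at $z=c$.

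It is even simpler to skip $g$ and define $h(x,w):=f\bigl(x,(u_\nu+\sum_j a_{\nu j}(c_{\nu j}+w_{\nu j}))_\nu\bigr)$ directly. The inner series are convergent with constant term $u_\nu(0)+\bar v_\nu(0)=\bar y_\nu(0)=0$. Hence $h\in\bC\lbrace x,w\rbrace$ and $h(x,\bar w(x))=f(x,\bar y(x))=0$ with no further argument; this route does not need properness at all. As a minor point, you use $c$ both for the approximation order and for the constants $c_{\nu j}$.
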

With the same setting, let $G$ be a reductive algebraic group (i.e. linear and every rational representation of $G$ is completely reducible). Suppose that $G$ acts linearly on $V=\bC^n$ and $W=\bC^N$. An element $\bar y(x)\in \bC[[x]]^N$ is said to be equivariant if $$\bar y(gx)=g\bar y(x), \;\forall g\in G.$$
\begin{theorem} {{\rm{(}}\cite[Theorem A]{BM79}{\rm{)}}}  \label{equiart}If $\bar y(x)$ is an equivariant formal power series solution of (\ref{ae}). Then for any $c\in \mathbb{N}$, there exists an equivariant convergent series solution $y(x)$ such that $y(x)\equiv \overline{y}(x) (\mod \mathfrak{m}^c)$.
\end{theorem}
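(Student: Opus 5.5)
The plan is to reduce the equivariant problem to an ordinary, non-equivariant approximation problem in the invariant variables, and then apply Theorem \ref{artappr} there. We use the reductivity of $G$ twice: for finite generation of invariants and covariants, and for the Reynolds operator, which is the algebraic substitute for Haar averaging. Averaging alone cannot work. If $y$ is a convergent solution given by Artin's theorem, the Reynolds projection $\int_G g^{-1}y(gx)$ is equivariant, but it no longer solves the nonlinear system \eqref{ae}, since $f$ itself is not assumed equivariant. So we must first build equivariance into the unknowns.

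\textbf{Step 1 (parametrizing equivariant maps).} Let $p=(p_1,\ldots,p_k)$ be a Hilbert basis of $\bC[V]^G$, and let $\varphi_1,\ldots,\varphi_m$ be polynomial generators of the module of equivariant polynomial maps $V\to W$ over $\bC[V]^G$. Applying the Reynolds operator degree by degree shows that every formal equivariant map can be written as $\bar y(x)=\sum_j \bar a_j(p(x))\varphi_j(x)$ with $\bar a_j\in\bC[[u_1,\ldots,u_k]]$. The same holds in the convergent category for convergent equivariant maps, with $a_j\in\bC\{u\}$; this is the Luna--Schwarz theorem on smooth and analytic invariants in its holomorphic form. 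Substituting this parametrization, the problem becomes: find $a(u)\in\bC\{u\}^m$ with $a\equiv\bar a$ to high order such that
$$g(x,a(p(x)))=0,\qquad g(x,z):=f\Big(x,\sum_j z_j\varphi_j(x)\Big),$$
knowing that $\bar a$ is a formal solution.

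\textbf{Step 2 (pushing the equation down to the invariants).} The difficulty is that $x$ and $u$ are mixed, so this is a nested approximation problem of the kind that fails in general (Gabrielov). To avoid it, I will decompose $\bC\{x\}^{\oplus \ell}$, the target of $g$, into $G$-isotypic components. Each component is a finitely generated module over $\bC\{x\}^G=\bC\{p\}$ with polynomial generators $\psi_1,\ldots,\psi_q$ (covariants of each irreducible type). I then want a parametric version of the Luna--Schwarz division: there should be convergent $c_i(u,z)$ with
$$g(x,z)\equiv \sum_i c_i(p(x),z)\,\psi_i(x)$$
modulo terms in $z$ whose coefficients are annihilated after substitution. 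This is the step I expect to be the main obstacle. It requires a division with holomorphic dependence on the parameters $z$, proved by applying the Reynolds operator to each isotypic piece of the Taylor expansion of $g$ in $z$ and controlling convergence through Cauchy estimates, uniformly in $z$. I would first try to obtain it from the finite-dimensional linear-algebra version degree by degree, together with an open-mapping estimate of the kind used in Grauert's privileged-polydisc arguments.

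\textbf{Step 3 (Artin in the invariant variables).} Given Step 2, the condition $g(x,a(p(x)))=0$ is equivalent to the statement that $c(u,a(u))$ lies in the module of relations among the $\psi_i$ over $\bC\{u\}$ (and we may also need to pass to the image of $p$). Choose finitely many convergent generators $r_1,\ldots,r_s$ of the relation module, which is possible by Noetherianity. The condition then becomes the system
$$c(u,a(u))=\sum_l b_l(u)\,r_l(u),$$
whose unknowns $a(u),b(u)$ all depend only on $u$. This is an ordinary analytic system. The formal solution $(\bar a,\bar b)$ exists by faithful flatness of completion, so Theorem \ref{artappr} provides convergent $(a,b)$ agreeing with $(\bar a,\bar b)$ modulo an arbitrarily high power of $\mathfrak m_u$. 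Finally, set $y(x)=\sum_j a_j(p(x))\varphi_j(x)$. This is equivariant, convergent and a solution. Since the $p_i$ have positive degree, $\mathfrak m_u^{c'}$ pulls back into $\mathfrak m^{c}$ for $c'\ge c$, which gives $y\equiv\bar y \pmod{\mathfrak m^c}$.
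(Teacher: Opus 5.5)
The paper does not prove this statement; the appendix only quotes it from Bierstone--Milman, so your proposal has to stand on its own. Steps 1 and 3 are sound in outline. Step 2, however, has a genuine gap, and it is not the one you flag.

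\textbf{The gap.} You decompose the target $\bC\{x\}^{\oplus \ell}$ of $g$ into isotypic components and then work with a single finite list $\psi_1,\ldots,\psi_q$. Each isotypic component is indeed finitely generated over $\bC\{x\}^G$, but as soon as $G$ is infinite there are infinitely many components. For example, take $\bC^*$ acting on $\bC$ with weight one. Then $\bC\{x\}^G=\bC$, while the monomials $x^d$ all have distinct weights, so $\bC\{x\}$ is not finitely generated over $\bC\{x\}^G$. (For finite $G$ your Step 2 is fine, since then $\bC\{x\}$ is finite over $\bC\{x\}^G$.) Because $f$ is not equivariant, $g(x,z)=f(x,\sum_j z_j\varphi_j(x))$ really does have nonzero components of infinitely many types, and each must vanish separately. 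Step 3 then produces one condition $c^{(\chi)}(u,a(u))\in R^{(\chi)}$ for every irreducible type $\chi$, i.e.\ infinitely many equations with different relation modules. Theorem~\ref{artappr} does not apply to such a system.

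\textbf{The missing idea: make the system equivariant \emph{before} pushing it down.}
\begin{enumerate}
\item If $y$ is equivariant and $f(x,y(x))=0$, then $f(\gamma^{-1}x,\gamma^{-1}y(x))=0$ for all $\gamma\in G$. Hence the equivariant (formal or convergent) solutions of $f=0$ are exactly those of $h=0$ for all $h$ in the ideal $J\subset\bC\{x,y\}$ generated by the translates $f_i(\gamma^{-1}x,\gamma^{-1}y)$.
\item $J$ is $G$-stable and finitely generated, and $J/\mathfrak m J$ is a finite-dimensional rational $G$-module. Lift it equivariantly into $J$ by averaging over a maximal compact subgroup, which preserves convergence. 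Nakayama then gives generators $h=(h_1,\ldots,h_r)$ of $J$ with $h(\gamma x,\gamma y)=\rho(\gamma)h(x,y)$ for some representation $U$.
\item Replace $f$ by $h$. Your $g(x,z)$ is now a $U$-covariant of $V\times\bC^m$, with $G$ acting trivially on $z$.
\item The ``parametric division'' you expected to be the main obstacle is then immediate. We have $\bC[V\times\bC^m]^G=\bC[V]^G[z]$, and the $U$-covariants of $V\times\bC^m$ are generated over this ring by the finitely many $U$-covariants $\psi_k$ of $V$. The holomorphic invariant/covariant theorem on $V\times\bC^m$ gives $g(x,z)=\sum_k C_k(p(x),z)\psi_k(x)$ with $C_k\in\bC\{u,z\}$. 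No uniform Cauchy estimates are needed.
\end{enumerate}
After this, your Step 3 goes through with finitely many $\psi_k$.

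\textbf{A remaining point in Step 3.} The existence of the formal $\bar b$ requires knowing that $(\bC[[x]]\otimes U)^G$ is the $\mathfrak m_u$-adic completion of $M=(\bC\{x\}\otimes U)^G$. Equivalently, the kernel of $\bC[[u]]^q\to(\bC[[x]]\otimes U)^G$ must be the completion of the convergent relation module. This follows from the graded structure of the polynomial covariants over $\bC[p]$. Flatness of $\bC[[u]]$ over $\bC\{u\}$ alone does not give it.
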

\subsection{Grauert's Approximation Theorem} \label{secgr}
\begin{defn} {{\rm{(}}\cite{Gra72} \cite[Definition 8.2.1]{JP00}{\rm{)}}}  We consider four sets of variables, $x=(x_1,\cdots,x_n)$, $s=(s_1,\cdots,s_l)$, $\Phi=(\Phi_1,\cdots,\Phi_p)$,$\Psi=(\Psi_1,\cdots,\Psi_q)$. Let $I\subset \mathbb{C}\lbrace s\rbrace$ be an ideal. We can extend $I$ to an ideal $I \in \mathbb{C}\lbrace x,s\rbrace $. Consider an element $F=(F_1,\cdots,F_k)\in \mathbb{C}\lbrace x,s,\Phi,\Psi\rbrace^k$
\begin{enumerate}
\item[$(1)$] An analytic solution of the equation $F\equiv 0$ modulo $I$ is a pair $(\phi,\psi)\in \mathbb{C}\lbrace s\rbrace^p \times \mathbb{C}\lbrace x,s\rbrace^q $, that is power series $\phi_1,\cdots,\phi_p\in \mathbb{C}\lbrace s\rbrace$ and $\psi_1,\cdots,\psi_q\in \mathbb{C}\lbrace x ,s\rbrace$ such that 
$$F(x,s,\phi(s),\psi(x,s))\equiv 0\mod I.\mathbb{C}\lbrace x ,s\rbrace^k,$$
i..e, $$F_1(x,s,\phi(s),\psi(x,s)),\cdots,F_k(x,s,\phi(s),\psi(x,s)) \in I.$$
\item[$(2)$] A solution of order $e$ of the equation $F\equiv 0 \mod I$ is a pair $(\phi,\psi)\in \mathbb{C}[ s]^p \times \mathbb{C}\lbrace x \rbrace[s]^q $  such that 
$$F(x,s,\phi(s),\psi(x,s))\equiv 0\mod (I+\mathfrak{m}^{e+1}).\mathbb{C}\lbrace x ,s\rbrace^k$$ where $\mathfrak{m}=(s_1,\cdots,s_l)$.
\end{enumerate}
\end{defn}
\begin{thrm}[\textbf{Grauert's Approximation Theorem}]{{\rm{(}}\cite{Gra72}, \cite[Theorem 8.2.2]{JP00}{\rm{)}}}\label{grauapp} Let the notations be as in the previous definition. Let $e_0\in \mathbb{N}.$ Suppose that the system of equations $$F\equiv 0 \mod I$$ has a solution  $(\phi^{(e_0)},\psi^{(e_0)})$ of order $e_0$. Suppose further that for all $e\geq e_0$ every solution $(\phi^{(e)},\psi^{(e)})$ order $e$ with $\phi^{(e)}=\phi^{(e_0)}\mod \mathfrak{m}^{e+1} $ and $\psi^{(e)}=\psi^{(e_0)}\mod \mathfrak{m}^{e+1} $ extends to a solution  $(\phi^{(e)}+\delta^{(e)},\psi^{(e)}+\gamma^{(e)})$ of oder $e+1$ with $\delta^{(e)}\in \mathbb{C}[ s]^p$ and $\gamma^{(e)}\in \mathbb{C}\lbrace x \rbrace [s]^p$ homogeneous of degree $e+1$ in $s$. Then the system of equation 
$$F \equiv 0 \mod I$$ has an analytic solution  $(\phi,\psi)$ with $\phi=\phi^{(e_0)}\mod \mathfrak{m}^{e+1} $ and $\psi=\psi^{(e_0)}\mod \mathfrak{m}^{e+1} $.
\end{thrm}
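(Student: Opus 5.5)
The plan is to construct the analytic solution as the limit of a formal solution built degree by degree. The hypothesis of Theorem \ref{grauapp} makes each degree step possible, and the real work is to choose each correction so that the resulting series in $s$ converges. Concretely, write $\phi=\phi^{(e_0)}+\sum_{e>e_0}\delta^{(e)}$ and $\psi=\psi^{(e_0)}+\sum_{e>e_0}\gamma^{(e)}$, where $\delta^{(e)}\in\mathbb{C}[s]^p$ and $\gamma^{(e)}\in\mathbb{C}\lbrace x\rbrace[s]^q$ are homogeneous of degree $e$ in $s$ (so $\delta^{(e+1)}$ here is the correction called $\delta^{(e)}$ in the statement). Iterating the hypothesis already gives a formal solution in $\mathbb{C}[[s]]^p\times\mathbb{C}\lbrace x\rbrace[[s]]^q$ that agrees with $(\phi^{(e_0)},\psi^{(e_0)})$ modulo $\mathfrak{m}^{e_0+1}$. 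Because of the nested structure ($\phi$ may depend on $s$ only), I would not try to deduce convergence from the Artin-type statements (Theorem \ref{artappr}); I would prove the estimates directly.

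\textbf{Step 1: linearize the extension problem.} Set $(\phi_0,\psi_0):=(\phi^{(e_0)},\psi^{(e_0)})$ evaluated at $s=0$, and let $L(\delta,\gamma):=\partial_\Phi F\cdot\delta+\partial_\Psi F\cdot\gamma$ be the derivative of $F$ at $(x,0,\phi_0,\psi_0)$. For a solution of order $e$, the order $e+1$ condition on a homogeneous correction of degree $e+1$ reads
\[
L(\delta,\gamma)\equiv -R_{e+1}\mod \bigl(I+\mathfrak{m}^{e+2}\bigr),
\]
where $R_{e+1}$ is the degree $e+1$ part of $F$ evaluated at the current solution. The hypothesis says exactly that $R_{e+1}$ lies in the image of this $\mathbb{C}\lbrace x\rbrace$-linear map on the graded piece $\bigl(\mathfrak{m}^{e+1}/(\mathfrak{m}^{e+2}+I\cap\mathfrak{m}^{e+1})\bigr)\otimes\mathbb{C}\lbrace x\rbrace$, taken componentwise on $\mathbb{C}\lbrace x\rbrace^k$.

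\textbf{Step 2: bounded right inverses.} On a privileged polydisc $P$ in the $x$-variables, let $B(P)$ be the Banach space of functions holomorphic on $P$ and continuous on its closure. I would use Grauert's division theorem with bounds, together with a standard basis of $I$ in $\mathbb{C}\lbrace x,s\rbrace$ so that reduction modulo $I$ becomes a bounded linear operator. The aim is a constant $C$, \emph{independent of $e$}, such that whenever $R_{e+1}$ lies in the image there is a preimage $(\delta,\gamma)$ with
\[
\|\delta\|+\|\gamma\|\le C\|R_{e+1}\|,
\]
the norm being the sup over $P$ of the coefficients in $s$. This is the main obstacle: the map and the quotient by $I$ change with the degree, so the uniformity must come from the finite generation of the relevant graded modules over $\mathbb{C}\lbrace x\rbrace[s]$. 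I would first prove that the image of $L$ is closed and complemented on the completed graded module, using the division theorem for privileged neighbourhoods, and then obtain $C$ from the open mapping theorem.

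\textbf{Step 3: majorant estimates and convergence.} Let $\Phi(t)=\sum_e\|\delta^{(e)}\|t^e$ and $\Psi(t)=\sum_e\|\gamma^{(e)}\|t^e$. Expanding $F$ to second and higher order around $(\phi_0,\psi_0)$ bounds $\|R_{e+1}\|$ by the degree $e+1$ coefficient of a convergent majorant $M(t,\Phi,\Psi)$ that vanishes to order two in the unknowns. With Step 2 this gives a coefficientwise inequality $(\Phi,\Psi)\ll C\,M(t,\Phi,\Psi)+\text{(initial polynomial)}$. The corresponding majorant equation has a convergent solution by the analytic implicit function theorem, as in the Kuranishi/Newton majorant method, and this dominates our series. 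Hence $\phi$ converges on a neighbourhood of $0$ and $\psi$ converges on $P\times\lbrace|s|<\varepsilon\rbrace$. The resulting pair solves $F\equiv0\mod I$, because it does so modulo every power of $\mathfrak{m}$ and $I$ is closed in the Krull topology. It also agrees with $(\phi^{(e_0)},\psi^{(e_0)})$ modulo $\mathfrak{m}^{e_0+1}$ by construction.
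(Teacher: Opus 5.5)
The paper does not prove this statement; it quotes it from Grauert and from de Jong--Pfister, so your outline has to stand on its own. Its overall shape is the right kind of argument: build the formal solution degree by degree, solve the linearized equation with a bound uniform in $e$, and conclude with majorants and the implicit function theorem. As written it works when $I=0$. It fails exactly where the ideal $I$ enters, and that is the substantive part of the theorem.

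\textbf{The gap.} In Step 1 you take $R_{e+1}$ to be the degree $e+1$ homogeneous part of $f:=F(x,s,\phi^{(e)},\psi^{(e)})$, and Step 3 bounds it by Taylor-expanding $F$.

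When $I\neq 0$, a solution of order $e$ only gives $f\in (I+\mathfrak m^{e+1})\mathbb C\lbrace x,s\rbrace^k$. The components of $f$ of degree $\le e$ need not vanish, and modulo $I$ they contribute in degree $e+1$. Take $l=2$, $I=(s_1-s_2^2)$, $p=k=1$, $q=0$, $F=s_1-\Phi$, $e_0=1$ and $\phi^{(1)}=0$; all hypotheses of the theorem hold. Here $f=s_1$ has zero degree-$2$ part, so your recursion, with the bound $\|\delta\|\le C\|R_2\|=0$, forces $\delta=0$. Yet $s_1\notin I+\mathfrak m^3$, and the correct correction is $s_2^2$.

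\textbf{What is missing: a normal form modulo $I$.}
\begin{enumerate}
\item Fix a standard basis of $I$ for a local \emph{degree} ordering. Then $f\in I+\mathfrak m^{e+1}$ if and only if $\mathrm{NF}(f)\in\mathfrak m^{e+1}$.
\item Define $R_{e+1}$ as the degree $e+1$ part of $\mathrm{NF}(f)$, expanded in standard monomials.
\item Use division with estimates in the norm $\|h\|_\rho=\sum_\alpha\|h_\alpha\|_P\rho^\alpha$. It gives the degreewise bound $|[\mathrm{NF}(h)]_d|_\rho\le C|h|_\rho$ for $h$ homogeneous of degree $d'\le d$. In other words, $\mathrm{NF}$ is majorized by multiplication by $C/(1-t)$.
\item Consequently $R_{e+1}$ is controlled by the coefficient of $t^{e+1}$ in $\frac{C}{1-t}M(t,W_{\le e})$. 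Here $M$ majorizes all of $F(x,s,\phi^{(e_0)}+u,\psi^{(e_0)}+v)$, including its terms of order $0$ and $1$ in $(u,v)$.
\item In particular, the linear part $L(u,v)$ coming from lower degrees reappears in degree $e+1$ with a factor $t/(1-t)$. It drops out only if you keep every correction supported on standard monomials.
\end{enumerate}
So the majorant function $\Gamma(t,W)$ does not vanish to order two in $W$. What makes the implicit function argument work is $\Gamma(0,0)=0$ and $\partial_W\Gamma(0,0)=0$, and you should verify these in this form.

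\textbf{Step 2 is easier than you suggest.} The uniformity in $e$ is not the real obstacle. $L$ has coefficients independent of $s$ and commutes with multiplication by $s$. So on the standard-monomial basis of $\mathfrak m^{e+1}/(\mathfrak m^{e+2}+I\cap\mathfrak m^{e+1})$ it acts diagonally, by the single map $L_0:\mathbb C^p\times\mathbb C\lbrace x\rbrace^q\to\mathbb C\lbrace x\rbrace^k$ obtained at $s=0$. One bounded right inverse for $L_0$ on a fixed polydisc $P$ then serves for every $e$. That inverse comes from the division theorem with bounds for the submodule generated by the columns of $\partial_\Psi F(x,0,\phi_0,\psi_0)$, together with a finite-dimensional piece.
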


\end{document}